\newcommand{\seqnum}[1]{\href{http://oeis.org/#1}{\underline{#1}}}
\newtheorem{theorem}{Theorem}[section]
\newtheorem{conjecture}[theorem]{Conjecture}
\newtheorem{proposition}[theorem]{Proposition}
\newtheorem{corollary}[theorem]{Corollary}
\newtheorem{lemma}[theorem]{Lemma}
\newenvironment{customthm}[1] {\innercustomthm}
  {\endinnercustomthm}
\newenvironment{customprop}[1]{\innercustomprop}
  {\endinnercustomprop}
\theoremstyle{definition}
\newtheorem{definition}[theorem]{Definition}
\newtheorem{example}[theorem]{Example}
\newtheorem{remark}[theorem]{Remark}
\DeclareMathOperator{\BAS}{BAS}
\definecolor{SJUBlue}{RGB}{0, 160, 230}
\definecolor{bluegreen}{RGB}{144,144,255}
\definecolor{niceblue}{HTML}{39A0FF}
\definecolor{legiblegreen}{RGB}{16, 194, 105}
\newcommand{\hroot}{\tilde{\alpha}}
\newcommand{\calA}{\mathcal{A}}
\newcommand{\calP}{\mathcal{P}}
\newcommand{\calH}{\mathcal{H}}
\newcommand*{\C}{\mathbb{C}}
\newcommand*{\Z}{\mathbb{Z}}
\newcommand*{\R}{\mathbb{R}}
\newcommand{\defn}[1]{\textbf{#1}}
\definecolor{c1}{HTML}{0085FF}
\definecolor{c2}{HTML}{FF9000}
\definecolor{c3}{HTML}{90D300}
\definecolor{c4}{HTML}{9728A1}
\newcommand{\precdot}{\prec\mathrel{\mkern-5mu}\mathrel{\cdot}}
\author[Anderson]{Portia X. Anderson}
\address[P.~X. Anderson]{Department of Mathematics, Cornell University, Ithaca, NY 14853}
\email{\textcolor{blue}{\href{mailto:pxa2@cornell.edu}{pxa2@cornell.edu}}}
\author[Banaian]{Esther Banaian}
\address[E.~Banaian]{Department of Mathematics, University of California, Riverside, Riverside, California, 92521}
\email{\textcolor{blue}{\href{mailto:estherbanaian@gmail.com}{estherbanaian@gmail.com}}}
\author[Ferreri]{Melanie J. Ferreri}
\address[M. ~J. Ferreri]{Department of Mathematics, College of William \& Mary, Williamsburg, VA, 23187}
\email{\textcolor{blue}{\href{mailto:mjferreri@wm.edu}{mjferreri@wm.edu}}}
\author[Goff]{Owen C. Goff}
\address[O.~C.~Goff]{Department of Mathematics, University of Wisconsin-Madison, Madison, WI 53706}
\email{\textcolor{blue}{\href{mailto:ogoff@wisc.edu}{ogoff@wisc.edu}}}
\author[Hadaway]{Kimberly P. Hadaway}
\address[K.~P.~Hadaway]{Department of Mathematics, Iowa State University, Ames, IA, 50010}
\email{\textcolor{blue}{\href{mailto:kph3@iastate.edu}{kph3@iastate.edu}}}
\author[Harris]{Pamela E. Harris}
\author[Harry]{Kimberly J. Harry}
\address[P.~E.~Harris, K.~J.~Harry]{Department of Mathematical Sciences, University of Wisconsin-Milwaukee, Milwaukee, WI 53211}
\email{\textcolor{blue}{\href{mailto:peharris@uwm.edu}{peharris@uwm.edu}} \textcolor{blue}{\href{mailto:kjharry@uwm.edu}{kjharry@uwm.edu}}}
\author[Mayers]{Nicholas Mayers}
\address[N.~Mayers]{Department of Mathematics, North Carolina State University, Raleigh, NC, 27605}
\email{\textcolor{blue}{\href{mailto:nmayers@ncsu.edu}{nmayers@ncsu.edu}}}
\author[Wang]{Shiyun Wang}
\address[S.~Wang]{Department of Mathematics, University of Minnesota Twin Cities, MN, 55455}
\email{\textcolor{blue}{\href{mailto:wang8406@umn.edu}{wang8406@umn.edu}}}
\author[Wilson]{Alexander N.~Wilson}
\address[A.~N.~Wilson]{Department of Mathematics, Oberlin College, Oberlin, OH 44074}
\email{\textcolor{blue}{\href{mailto:awilson6@oberlin.edu}{awilson6@oberlin.edu}}}
\title[Weyl alternation sets are order ideals]{The support of Kostant's weight multiplicity formula is an order ideal in the weak Bruhat order
}
\begin{document}

\begin{abstract}
    For integral weights $\lambda$ and $\mu$ of a classical simple Lie algebra $\mathfrak{g}$, Kostant's weight multiplicity formula gives the multiplicity of the weight $\mu$ in the irreducible representation with highest weight $\lambda$, which we denote by $m(\lambda,\mu)$. 
    Kostant's weight multiplicity formula is an alternating sum over the Weyl group of the Lie algebra whose terms are determined via a vector partition function.
    The Weyl alternation set $\calA(\lambda,\mu)$ is the set of elements of the Weyl group that contribute nontrivially to the multiplicity $m(\lambda,\mu)$. 
    In this article, we prove that Weyl alternation sets are order ideals in the weak Bruhat order of the corresponding Weyl group. 
    Specializing to the Lie algebra $\mathfrak{sl}_{r+1}(\mathbb{C})$, we give a complete characterization of the Weyl alternation sets $\calA(\hroot,\mu)$, where $\hroot$ is the highest root and $\mu$ is a 
    negative root, answering a question of Harry posed in 2024. 
    We also provide some enumerative results that pave the way for our future work, where we aim to prove Harry's conjecture that the $q$-analog of Kostant's weight multiplicity formula is $m_q(\hroot,\mu)=q^{r+j-i+1}+q^{r+j-i}-q^{j-i+1}$ when $\mu=-(\alpha_i+\alpha_{i+1}+\cdots+\alpha_{j})$ is a negative root of $\mathfrak{sl}_{r+1}(\mathbb{C})$.
\end{abstract}

\maketitle
\section{Introduction}

 In this article, we are concerned with the computation of weight multiplicities using Kostant's weight multiplicity formula. 
 For a classical simple Lie algebra $\mathfrak{g}$ over $\mathbb{C}$ of rank $r\geq 1$, let $\Delta=\{\alpha_1,\alpha_2,\ldots,\alpha_r\}$ denote its set of simple roots,
    $\Phi$ denote its set of roots,
 $\Phi^+$ denote its set of positive roots, $\Phi^-$ denote its set of negative roots, $\tilde\alpha$
denote its highest root, and let $\rho=\frac{1}{2}\sum_{\alpha\in\Phi^+}\alpha$.
    Moreover, let $W$ denote the Weyl group associated to $\mathfrak{g}$,
    which is generated by reflections denoted as $s_1, s_2,\ldots, s_r$, through hyperplanes orthogonal to the simple roots $\alpha_1,\alpha_2,\ldots,\alpha_r$, respectively. 
If $\sigma\in W$, then $\ell(\sigma)$ denotes the length of $\sigma$.
Now, letting $L(\lambda)$ denote the irreducible highest weight representation of $\mathfrak{g}$ with highest weight $\lambda$; 
Kostant's weight multiplicity formula gives a way to compute the multiplicity of a weight $\mu$ in $L(\lambda)$, denoted $m(\lambda, \mu)$. Using an alternating sum over the Weyl group that involves the Kostant partition function, as given in \cite[Theorem on p.\ 589]{Kostant}, one has
\begin{align}
    m(\lambda, \mu)=\sum_{\sigma \in W} (-1)^{\ell(\sigma)}\wp(\sigma(\lambda+\rho)-\mu - \rho)\label{KWMF}
\end{align}
where 
$\wp$ denotes Kostant's partition function. That is, $\wp$ is the function from the set of weights to $\Z_{\geq0}=\{0,1,2,3,\ldots\}$
for which $\wp(\xi)$ is the number of ways the weight $\xi$ can be expressed as a linear combination of the positive roots of $\mathfrak{g}$ with nonnegative integer coefficients.

Two major challenges arise in computing weight multiplicities via Kostant's formula: 
\begin{enumerate}
    \item[1)] The number of terms grows factorially in the rank of the Lie algebra {as the order of the Weyl group of a classical Lie algebra is either $r!$ or exponential times $r!$} and
    \item[2)] There are no general closed formulas for the value of the Kostant partition function.
\end{enumerate} 
However, in practice, many of the terms are zero and contribute trivially to the multiplicity \cite{Cochet}. 
Hence, this motivates work to characterize and enumerate the Weyl group elements that contribute nontrivially to the multiplicity. This set is referred to as the Weyl alternation set and is defined formally below.

\begin{definition}[Definition 1.1 in \cite{PHThesisPublication}]
    For $\lambda$ and $\mu$ integral weights of $\mathfrak{g}$, the \textbf{Weyl alternation set} is
    \begin{equation}
        \mathcal{A}(\lambda, \mu) = \{\sigma \in W ~|~ 
 \wp(\sigma(\lambda+\rho)-\mu - \rho)>0\}.
    \end{equation}
\end{definition} 
Note that $\sigma\in W$ is in $\mathcal{A}(\lambda,\mu)$ if and only if $\sigma(\lambda+\rho)-\mu - \rho$ can be written as a sum of positive roots.
The Weyl alternation set allows one to reduce the computation of $m(\lambda,\mu)$ to be done over $\mathcal{A}(\lambda,\mu)$ rather than over $W$, which greatly reduces the computation involved\footnote{For example, \Cref{thm:gen_func} and the theory of rational generating functions (see \cite[Chapter 4]{EC1}) show that the size of the alternation set for the multiplicity of a root space in the adjoint representation in type $A_r$ grows as a polynomial times an exponential in the rank $r$, whereas the Weyl group grows as a factorial.}. 
We remark that even in cases where the multiplicity is known,
such as when the highest root $\lambda$ and a root $\mu$ produce the multiplicity of one, 
determining how many terms contribute to this multiplicity and what each of the contributing terms are
is very often unknown.
Work determining Weyl alternation sets for a variety of pairs of weights appears in the literature and includes characterizing Weyl alternation sets in both classical and exceptional Lie algebras
\cite{MR4015857,MR4747948,MR4452794,MR3792167,MR4707357,MR3774802,MR3825932,Lauren,MR3730711,MR4747948,MR4025421}.
Of note is the work of 
Harris, which establishes that if $\hroot$ is the highest root of $\mathfrak{sl}_{r+1}(\mathbb{C})$, then the Weyl alternation set $\mathcal{A}(\hroot,0)$ consists of products of certain commuting simple reflections and hence $|\mathcal{A}(\hroot,0)|=F_{r}$, the $r$th Fibonacci number \cite[Theorem~1.2]{PHThesisPublication}.
Harry establishes that if $\hroot$ is the highest root and $\mu=\alpha_i+\alpha_{i+1}+\cdots+\alpha_j$ is a height 
$j-i+1$ positive root of $\mathfrak{sl}_{r+1}(\mathbb{C})$, then the Weyl alternation set $\mathcal{A}(\hroot,\mu)$ again consists of certain products of commuting simple reflections and, hence, $|\mathcal{A}(\hroot,\mu)|=F_i \cdot F_{r-j+1}$ \cite[Corollary~2.5]{Harry}.

Motivated by the work of Harris and Harry, in this article, we prove general structural results about the Weyl alternation set. For example, we establish the following theorem elucidating the poset structure of $\calA(\lambda,\mu)$.

\begin{customthm}{\ref{prop:ideal_conditions_on_lambda}}
    Let $\lambda$ be an integral dominant weight of a simple Lie algebra $\mathfrak{g}$ with Weyl group $W$. Then for any weight $\mu$, the Weyl alternation set $\calA(\lambda,\mu)$ is a (possibly empty) order ideal in the left and right weak Bruhat orders of $W$.
\end{customthm}

Next, we generalize the results of Harris and Harry by replacing simple transpositions with elements having a property called \textit{connected influence} and replacing the commuting condition with a condition called \textit{pairwise independence} 
(see Definitions~\ref{def:ind} and~\ref{def:independence}), 
leading to the following result. 

\begin{customthm}{\ref{thm:characterize_BAS}}
    Let $\lambda$ be a dominant integral weight of a simple Lie algebra $\mathfrak{g}$ and $\mu$ a weight such that $\calA(\lambda,\mu)$ is nonempty. Then
    
    \begin{enumerate}[(1)]
        \item there exists a unique subset $S\subseteq \calA(\lambda,\mu)$ where $1\notin S$ such that each $b\in S$ has connected influence and any element $\sigma\in \calA(\lambda,\mu)$ can be written as a product of elements from a pairwise independent subset of $S$, and 
        \item there is a bijection between elements of $\calA(\lambda,\mu)$ and pairwise independent subsets of $S$. In particular, each pairwise independent subset of $S$ corresponds to the element in $\calA(\lambda,\mu)$ that can be written as the product of elements of the subset.
    \end{enumerate}
\end{customthm}

\setcounter{section}{1}
\setcounter{theorem}{0}
In \Cref{thm:characterize_BAS}(1), the products could be over possibly empty subsets of $S$. 
Using these results, we specialize to the Lie algebra of type $A_r$ where the positive roots are given by $\alpha_{i,j}=\alpha_i+\alpha_{i+1}+\cdots+\alpha_j$ for $1\leq i\leq j\leq r$. 
Specifically, in \Cref{sec:NegAlpha}, we characterize the Weyl alternation sets $\mathcal{A}_r(\hroot,\mu)$, where we use the subscript $r$ to denote the rank, $\hroot$ to denote the highest root, and $\mu$ to denote a negative root of $\mathfrak{sl}_{r+1}(\mathbb{C})$. 
In addition, in \Cref{sec:enumerative}, we prove that the cardinalities of these Weyl alternation sets satisfy the two-term recurrence relation defining the Fibonacci numbers detailed in the following result.

\begin{customprop} {\ref{prop:RecurrenceOnSizesFixedRoot}}
Let $r\geq 3$ and 
 $1 \leq i \leq j \leq r-2 $. If $\mu = -\alpha_{i,j}$, 
then
\[
\vert \mathcal{A}_r(\hroot, \mu) \vert = \vert \mathcal{A}_{r-1}(\hroot, \mu) \vert  + \vert \mathcal{A}_{r-2}(\hroot, \mu) \vert. 
\]
\end{customprop}

We remark that the initial values of the recurrence relation depend on $\mu$, and we present some examples in \Cref{sec:enumerative}.

Using the recurrence in \Cref{prop:RecurrenceOnSizesFixedRoot}, we provide the generating function for the sizes of Weyl alternation sets for all type $A$ negative roots.

\begin{customthm}{\ref{thm:gen_func}}
    If $\hroot$ is the highest root and $\mu=-\alpha_{i,j}$ with $1\leq i\leq j\leq r$
 is a negative root of the Lie algebra of type $A_r$, then we have the generating function
\begin{align*}
\sum_{1\leq i\leq j\leq r} \vert \mathcal{A}_r(\hroot, -\alpha_{i,j})\vert x^r s^it^j &= \frac{1}{t(1-x-x^2)}\left((1-x)t\calH(xt,s)+\calP(xt,s)-\frac{xst}{1-xst-(xst)^2}\right)\\\intertext{where}
\calH(x,s)&=\frac{xs(x^5s+3x^4s-xs+x^2+2x+1)}{(1-x-x^2-3x^3-x^4)(1-xs-(xs)^2)}\\\intertext{and}
\calP(x,s)&=\frac{xs(x^4s+3x^3s+x+1)}{(1-x-x^2-3x^3-x^4)(1-xs-(xs)^2)}.
\end{align*} 
\end{customthm}

The remainder of the article is organized as follows. In \Cref{sec:background}, we provide the necessary background to make our approach precise. In \Cref{sec:ideals}, we prove \Cref{prop:ideal_conditions_on_lambda} establishing that the Weyl alternation sets are order ideals in the weak Bruhat order. 
Then, we specialize our study to the Lie algebra of type $A$, and in \Cref{sec:NegAlpha}, we characterize all of the Weyl alternation sets $\calA(\hroot,\mu)$ where $\hroot$ is the highest root and $\mu$ is a negative root. 
In \Cref{sec:enumerative}, we give recursive formulas and generating functions for the cardinalities of the Weyl alternation sets considered in the previous section. 
We conclude with \Cref{sec:future} in which we detail our future work where we aim to establish a conjecture of Harry for the $q$-multiplicity of a negative root in the adjoint representation of $\mathfrak{sl}_{r+1}(\mathbb{C})$.

\section{Background}\label{sec:background}

In this section, we cover the requisite preliminaries from the theories of Lie algebras, root systems, and posets.

 Let $\mathfrak{g}$ be a semi-simple Lie algebra and let $\mathfrak{h}$ be a Cartan subalgebra of $\mathfrak{g}$. A \defn{weight} of a representation $\sigma:\mathfrak{g}\rightarrow \mathfrak{gl}(V)$ is a linear functional $\lambda:\mathfrak{h}\rightarrow \mathbb{C}$ with a corresponding \defn{weight space} $V_\lambda$, where
$$V_\lambda:=\{v\in V\mid \forall H\in\mathfrak{h},\, \sigma(H)(v)=\lambda(H)v\}.$$ The nonzero elements of $V_\lambda$ are called \defn{weight vectors}. 
We can choose an inner product $(\cdot,\cdot)$ on $\mathfrak{h}^*$ that is invariant under the action of the Weyl group (defined below), and this allows us to identify weights with elements of a Euclidean space $\mathbb{R}^d$ with dimension $d$. 
A \textbf{root} is a weight of the adjoint representation, and the collection of roots forms a \textbf{root system} $\Phi\subseteq \mathbb{R}^d$. 
There is a subset $\Delta=\{\alpha_1,\alpha_2,\ldots,\alpha_r\} \subseteq \Phi$ of \textbf{simple roots} (where $r=\dim(\mathfrak{h})$ is the rank of $\mathfrak{g}$), which acts as an integral basis for $\Phi$.
The \defn{positive roots} $\Phi^+\subseteq \Phi$ are those that can be written as a linear combination of the simple roots with nonnegative integer coefficients; the \defn{height} of a positive root is the sum of these coefficients. Similarly, the \defn{negative roots} $\Phi^- \subseteq \Phi$ are those which can be written as a nonpositive, integral linear combination of simple roots and their height can be defined as the absolute value of the sum of the coefficients.

For weights $\lambda$ and $\mu$, we write $\mu\geq\lambda$ and say that $\mu$ is \defn{higher} than $\lambda$ if $\mu-\lambda$ can be expressed as a nonnegative linear combination of positive roots. We call a weight $\lambda$ \defn{integral} if $2\frac{(\lambda,\alpha)}{(\alpha,\alpha)}\in\mathbb{Z}$ for all roots $\alpha$, while we call $\lambda$ \defn{dominant} if $(\lambda,\alpha)\geq0$ for each positive root $\alpha$. Every dominant integral element is the unique highest weight of a finite-dimensional irreducible representation of $\mathfrak{g}$, and via this correspondence, the set of dominant integral elements is in bijection with the set of isomorphism classes of all such representations.

The data of the root system can be summarized in a graph called a \defn{Coxeter--Dynkin diagram} (sometimes referred to as a \textbf{Coxeter diagram}). 
This graph has vertices labeled by $1,2,\ldots, r$ and an edge between $i$ and $j$ if and only if $\alpha_i$ and $\alpha_j$ are not orthogonal. The edge between $i$ and $j$ is usually labeled by a positive integer $m_{i,j}$ for which the angle between $\alpha_i$ and $\alpha_j$ is given by $\frac{\pi}{m_{i,j}}$, but we suppress such labels in this paper.

The \textbf{Weyl group} $W$ associated to $\mathfrak{g}$ is generated by simple reflections $s_1, s_2,\ldots,s_r$ where $s_i$ reflects vectors through the hyperplane orthogonal to $\alpha_i$. 
That is, \begin{align*}
    s_i(v)=v-2\frac{(\alpha_i,v)}{(\alpha_i,\alpha_i)}\alpha_i,
\end{align*} where $(\cdot,\cdot)$ is the standard inner product on $\R^d$. 
The inner product $(\cdot,\cdot)$ is invariant under the action of the Weyl group.
Two reflections $s_i$ and $s_j$ commute if and only if $\alpha_i$ and $\alpha_j$ are orthogonal---that is, if $i$ and $j$ are nonadjacent in the Coxeter--Dynkin diagram. 
When viewing the Weyl group as an abstract Coxeter group, this action on the roots coincides with what is called the \defn{geometric representation} of the group.

To the Weyl group $W$, one can associate the ``right and left weak (Bruhat) orders".
Recall that a \defn{poset} $(\mathcal{P},\preceq)$ consists of a set $\mathcal{P}$ along with a binary relation $\preceq$ between the elements of $\mathcal{P}$ that is 
reflexive, anti-symmetric, and transitive. 
For $x,y\in\mathcal{P}$, if $x\preceq y$ and $x\neq y$, then we write $x\prec y$. 
In the case $x\prec y$ and there exists no $z\in \mathcal{P}$ satisfying $x\prec z\prec y$, then $x\prec y$ is a \defn{covering relation}, denoted $x\precdot y$, and we say that $y$ covers $x$. Covering relations are used to define a visual representation of $\mathcal{P}$ called the \defn{Hasse diagram}---a graph whose vertices correspond to elements of $\mathcal{P}$ and whose edges correspond to covering relations. For further details concerning posets we recommend \cite{EC1}. 

Writing elements of a Weyl group $\sigma\in W$ as \defn{reduced words}, i.e., minimal length expressions for $\sigma$ as a product of generators $s_i$, and denoting the length of such expressions by $\ell(\sigma)$, the posets on $W$ of interest here are defined as follows.

\begin{definition}
The \defn{right weak (Bruhat) order} $(W,\leq_R)$ on the Weyl group $W$ is the transitive closure of the covering relations \[\sigma\lessdot_R \sigma s_i \text{ whenever } \ell(\sigma s_i)>\ell(\sigma).
\] 
The \defn{left weak (Bruhat) order} $(W,\leq_L)$ is similarly defined by covering relations \[\sigma\lessdot_L s_i\sigma \text{ whenever } \ell(s_i\sigma)>\ell(\sigma).\] 
\end{definition}

The right (resp. left) weak order can be equivalently given as $\sigma<_R\tau$ (resp. $\sigma<_L\tau$) if in some reduced expression of $\tau$ a reduced expression of $\sigma$ appears as a prefix (resp. suffix). 

\begin{example} \Cref{fig:HasseDiagram} illustrates the Hasse diagram of the right weak order on the type $A_3$ Weyl group.
    \begin{figure}[htp]
    \centering
        \begin{tikzpicture}
    [scale=1.2,auto=left]

        \node (4321) at (0,3) {$s_1s_2s_3s_1s_2s_1$};

    \node (4312) at (-1.75,2) {$s_2s_3s_2s_1s_2$};
    \node (4231) at (0,2) {$s_1s_2s_3s_2s_1$};
    \node (3421) at (1.75,2) {$s_1s_2s_3s_1s_2$};

    \node (4132) at (-3,1) {$s_2s_3s_2s_1$};
    \node (4213) at (-1.5,1) {$s_3s_1s_2s_1$};
    \node (3412) at (0,1) {$s_2s_3s_1s_2$};
    \node (2431) at (1.5,1) {$s_1s_2s_3s_2$};
    \node (3241) at (3,1) {$s_1s_2s_1s_3$};

    \node (1432) at (-3.75,0) {$s_2s_3s_2$};
    \node (4123) at (-2.25,0) {$s_3s_2s_1$};
    \node (2413) at (-.75,0) {$s_1s_3s_2$};
    \node (3142) at (.75,0) {$s_2s_3s_1$};
    \node (3214) at (2.25,0) {$s_1s_2s_1$};
    \node (2341) at (3.75,0) {$s_1s_2s_3$};

    \node (1423) at (-3,-1) {$s_3s_2$};
    \node (1342) at (-1.5,-1) {$s_2s_3$};
    \node[text=c1] (2143) at (0,-1) {\fbox{$s_1s_3$}};
    \node (3124) at (1.5,-1) {$s_2s_1$};
    \node (2314) at (3,-1) {$s_1s_2$};
    
    \node[text=c1] (1243) at (-1.5,-2) {\fbox{$s_3$}};
    \node[text=c1] (1324) at (0,-2) {\fbox{$s_2$}};
    \node[text=c1] (2134) at (1.5,-2) {\fbox{$s_1$}};
    
    \node[text=c1] (1234) at (0,-3) {\fbox{$1$}};

    \foreach \from/\to in {
        1243/1423,
        1324/1342,
        1324/3124,
        2134/2314,
        1423/1432,
        1423/4123,
        1342/1432,
        1342/3142,
        2143/2413,
        3124/3142,
        3124/3214,
        2314/3214,
        2314/2341,
        1432/4132,
        4123/4132,
        4123/4213,
        2413/4213,
        2413/2431,
        3142/3412,
        3214/3241,
        2341/3241,
        2341/2431,
        4132/4312,
        4213/4231,
        3412/4312,
        3412/3421,
        2431/4231,
        3241/3421,
        4312/4321,
        4231/4321,
        3421/4321}{
        \draw (\from) -- (\to);}

    \foreach \from/\to in {
        1234/1243,
        1234/1324,
        1234/2134,
        2134/2143,
        1243/2143}{
    \draw[c1, ultra thick] (\from) -- (\to);}
\end{tikzpicture}
\caption{Hasse diagram of the right weak order on the type $A_3$ Weyl group with the elements of the order ideal $I=\{1,s_1,s_2,s_3,s_1s_3\}$ boxed and in blue text.}
\label{fig:HasseDiagram}
\end{figure}
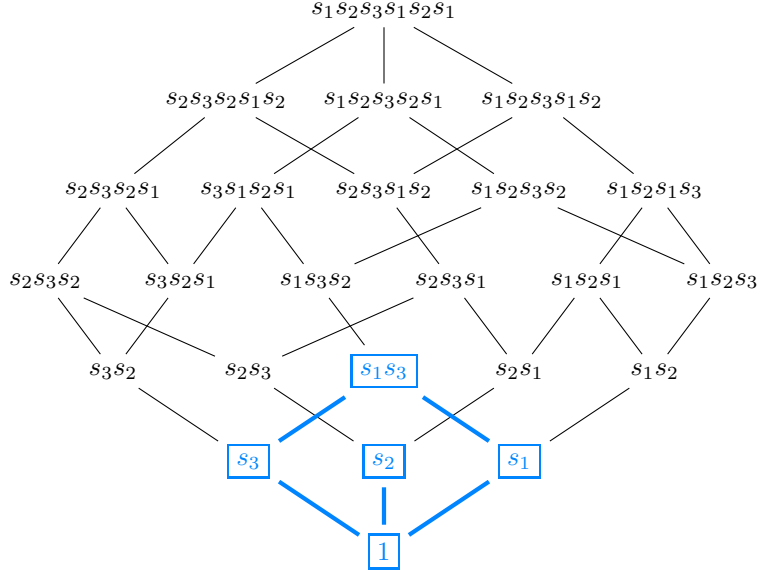
\end{example}

As noted in the introduction, certain subsets of the left and right Bruhat orders, called ``order ideals", play an important role ongoing in this study. Following \cite[p.\ 282]{EC1}, we define an \defn{order ideal} to be a subposet $I$ of a poset $\mathcal{P}$ with the property that if $y\in I$ and $x\leq y$, then $x\in I$.

\begin{example}\label{Ex: right weak order} 
The set $I=\{1,s_1,s_2,s_3,s_1s_3\}$ is 
an order ideal of the right weak order on the type $A_3$ Weyl group.
This order ideal is generated by the elements $s_2$ and $s_1s_3$ which are the maximal elements in $I$.
In \Cref{fig:HasseDiagram}, we highlight the order ideal 
$I$ within the Hasse diagram of the right weak order on the type $A_3$ Weyl group. 
 \end{example}

\section{Weyl alternation sets are order ideals}\label{sec:ideals}
In this section, we work with a general simple Lie algebra $\mathfrak{g}$, and our main focus is the structure of the associated Weyl alternation sets.
Our first main result establishes that $\calA(\lambda,\mu)$ for an integral dominant weight $\lambda$ is an order ideal in both the left and right weak Bruhat orders. 
\begin{theorem}\label{prop:ideal_conditions_on_lambda}
    Let $\lambda$ be an integral dominant weight of a simple Lie algebra $\mathfrak{g}$ with Weyl group $W$. Then for any weight $\mu$, the Weyl alternation set $\calA(\lambda,\mu)$ is a (possibly empty) order ideal in the left and in the right weak Bruhat orders of $W$.
\end{theorem}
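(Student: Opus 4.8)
The plan is to prove that $\calA(\lambda,\mu)$ is downward closed. Since both weak orders are by definition the transitive closure of their covering relations, it suffices to check closure one cover at a time. Writing $\nu=\lambda+\rho$, I would reduce the theorem to two implications: in the right order, if $\sigma s_i\in\calA(\lambda,\mu)$ and $\ell(\sigma s_i)>\ell(\sigma)$ then $\sigma\in\calA(\lambda,\mu)$; and in the left order, if $s_i\sigma\in\calA(\lambda,\mu)$ and $\ell(s_i\sigma)>\ell(\sigma)$ then $\sigma\in\calA(\lambda,\mu)$. Each implication says exactly that whenever a cover's top element lies in $\calA(\lambda,\mu)$, so does its bottom element, which is precisely the order-ideal condition from the definition.

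The engine of the argument is that $\nu=\lambda+\rho$ is strictly dominant and integral: because $\lambda$ is dominant integral and $2\frac{(\rho,\alpha_i)}{(\alpha_i,\alpha_i)}=1$ for every simple root, one gets $(\nu,\alpha_i)>0$ for all $i$, and hence $(\nu,\beta)>0$ for every positive root $\beta$. I would pair this with the two standard length-versus-roots criteria $\ell(\sigma s_i)>\ell(\sigma)\iff\sigma(\alpha_i)\in\Phi^+$ and $\ell(s_i\sigma)>\ell(\sigma)\iff\sigma^{-1}(\alpha_i)\in\Phi^+$.

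For the right order I would compute directly that $\sigma(\nu)-\sigma s_i(\nu)=2\frac{(\alpha_i,\nu)}{(\alpha_i,\alpha_i)}\,\sigma(\alpha_i)$. The scalar here is a positive integer—positive by strict dominance and an integer because $\nu$ is integral—while $\sigma(\alpha_i)\in\Phi^+$ exactly because $\ell(\sigma s_i)>\ell(\sigma)$, so this difference is a nonnegative integer combination of positive roots. Adding it to $\sigma s_i(\nu)-\mu-\rho$, which is already such a combination by the hypothesis $\sigma s_i\in\calA(\lambda,\mu)$, exhibits $\sigma(\nu)-\mu-\rho$ as a nonnegative integer combination of positive roots, giving $\wp(\sigma(\nu)-\mu-\rho)>0$ and hence $\sigma\in\calA(\lambda,\mu)$. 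The left order is symmetric: $\sigma(\nu)-s_i\sigma(\nu)=2\frac{(\alpha_i,\sigma\nu)}{(\alpha_i,\alpha_i)}\,\alpha_i$, and $(\alpha_i,\sigma\nu)=(\sigma^{-1}\alpha_i,\nu)>0$ by $W$-invariance of the inner product together with $\sigma^{-1}(\alpha_i)\in\Phi^+$ and strict dominance, so the identical additive bookkeeping applies.

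The step I expect to require the most care is identifying the correct object that must be regular dominant: it is $\lambda+\rho$, not $\lambda$, and this is exactly where the hypothesis ``$\lambda$ dominant integral'' (rather than merely integral) is used. Strict dominance guarantees the pairings $(\nu,\alpha_i)$ are strictly positive, so the difference vectors point along positive roots with \emph{positive} coefficients, and integrality of $\nu$ guarantees those coefficients are integers, so that $\wp$ genuinely counts the resulting expression. The only remaining subtlety is the elementary bookkeeping that a sum of two nonnegative integer combinations of positive roots is again one, which is what keeps $\wp$ strictly positive; beyond this, everything reduces to the two standard length criteria.
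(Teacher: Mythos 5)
Your proposal is correct and follows essentially the same route as the paper's proof: reduce to covering relations, compute the difference $\sigma(\lambda+\rho)-\tau(\lambda+\rho)$ as a scalar multiple of $\sigma(\alpha_i)$ (right order) or $\alpha_i$ (left order), and use the length criteria together with dominance and integrality of $\lambda+\rho$ to see that this difference is a nonnegative integer combination of positive roots, which propagates positivity of $\wp$ downward. The only cosmetic difference is that you emphasize strict dominance of $\lambda+\rho$, whereas the paper only needs (and only claims) nonnegativity of the coefficients; both suffice.
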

\begin{proof}
Suppose that $\calA(\lambda,\mu)\neq\emptyset$; otherwise, $\calA(\lambda,\mu)$ is vacuously an order ideal. To show that $\calA(\lambda,\mu)$ is an order ideal with respect to a partial order $\leq$, it suffices to establish the following: if $\sigma\lessdot\tau$ (that is, $\tau$ covers $\sigma$ in the partial order), then the difference $\sigma(\lambda+\rho)-\tau(\lambda+\rho)$ is a linear combination of positive roots with nonnegative integer coefficients.
Indeed, this implies that $\sigma(\lambda+\rho)-\rho-\mu$ could be obtained from $\tau(\lambda+\rho)-\rho-\mu$ by adding some number of simple roots with positive coefficients. 
Consequently, if $\wp(\tau(\lambda+\rho)-\rho-\mu)>0$, then it would then be the case that $\wp(\sigma(\lambda+\rho)-\rho-\mu)>0$.
For example, a partition of the latter could be obtained by adding these same simple roots to the partition of the former, each as their own part.
Hence, $\tau\in \calA(\lambda,\mu)$ would imply that $\sigma\in \calA(\lambda,\mu)$, i.e., $\calA(\lambda,\mu)$ would be an order ideal.

First, we consider the right Bruhat order.
Suppose that $\sigma\lessdot_R\tau$. 
By definition, $\tau=\sigma s_i$ for some simple reflection $s_i\in W$ and $\ell(\sigma s_i)>\ell(\sigma)$. 
We now compute
\begin{align*}
      \sigma(\lambda+\rho)-\sigma s_i(\lambda+\rho)&=\sigma(\lambda+\rho)-\left(\sigma(\lambda+\rho)-2\frac{(\lambda+\rho,\alpha_i)}{(\alpha_i,\alpha_i)}\sigma(\alpha_i)\right)=2\frac{(\lambda+\rho,\alpha_i)}{(\alpha_i,\alpha_i)}\sigma(\alpha_i).
    \end{align*}

    By \cite[Equation 4.25]{bjorner2005combinatorics}, $\ell(\sigma s_i)>\ell(\sigma)$ implies that $\sigma(\alpha_i)\in\Phi^+$. Because $\lambda$ and $\rho$ are dominant integral weights, $2\frac{(\lambda+\rho,\alpha_i)}{(\alpha_i,\alpha_i)}$ is a nonnegative integer, so the difference $\sigma(\lambda+\rho)-\sigma s_i(\lambda+\rho)$ is indeed a linear combination of simple roots with nonnegative integer coefficients

Now, consider the left Bruhat order.
Suppose instead that $\sigma\lessdot_L\tau$. 
By definition $\tau=s_i\sigma$ and $\ell(s_i\sigma)>\ell(\sigma)$. 
We now compute
\begin{align*}
        \sigma(\lambda+\rho)-s_i \sigma(\lambda+\rho)&=\sigma(\lambda+\rho)-\left(\sigma(\lambda+\rho)-2\frac{(\sigma(\lambda+\rho), \alpha_i)}{(\alpha_i,\alpha_i)}\alpha_i\right)\\
        &=2\frac{(\sigma(\lambda+\rho), \alpha_i)}{(\alpha_i,\alpha_i)}\alpha_i\\
        &=2\frac{(\lambda+\rho,\sigma^{-1}(\alpha_i))}{(\sigma^{-1}(\alpha_i),\sigma^{-1}(\alpha_i))}\alpha_i,
    \end{align*}
    where the last equality holds because the inner product is invariant under the action of $W$ (for a proof, we point the reader to \cite[Equation 4.23]{bjorner2005combinatorics}).
    Moreover, $\ell(s_i\sigma)>\ell(\sigma)$ implies that $\ell(\sigma^{-1}s_i)>\ell(\sigma^{-1})$, and \cite[Equation 4.25]{bjorner2005combinatorics} tells us that $\sigma^{-1}(\alpha_i)$ is a positive root. Further, $\lambda$ and $\rho$ are dominant integral weights.
    Hence, $2\frac{(\lambda+\rho,\sigma^{-1}(\alpha_i))}{(\sigma^{-1}(\alpha_i),\sigma^{-1}(\alpha_i))}$ is a nonnegative integer, and the difference $\sigma(\lambda+\rho)-s_i\sigma(\lambda+\rho)$ is indeed a linear combination of simple roots with nonnegative integer coefficients.
\end{proof}

The following result is essential in our work to characterize Weyl group elements that are not contained in certain Weyl alternation sets.
Recall that a reduced word of $\sigma \in W$ is an expression of $\sigma$ as a minimal-length product of generators $s_i$.

\begin{corollary} \label{cor:subword_property}
     Let $\lambda$ be a dominant integral weight of a simple Lie algebra $\mathfrak{g}$. For any weight $\mu$, if a reduced word of $\sigma\in W$ is a consecutive subword of a reduced word of $\tau\in \calA(\lambda,\mu)$, then $\sigma\in\calA(\lambda,\mu)$.
\end{corollary}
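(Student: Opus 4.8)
The plan is to deduce this corollary from \Cref{prop:ideal_conditions_on_lambda} together with the prefix/suffix descriptions of the weak Bruhat orders recorded just after their definition. The key observation is that a \emph{consecutive} subword is in general neither a prefix nor a suffix of the ambient reduced word, so neither the left nor the right weak order alone will suffice; instead I would chain the two ideal properties through a single intermediate element.

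Concretely, I would fix a reduced word $\tau = s_{i_1} s_{i_2} \cdots s_{i_k}$ in which $\sigma = s_{i_a} s_{i_{a+1}} \cdots s_{i_b}$ appears as the consecutive block running from position $a$ to position $b$. First I would record the standard fact that every consecutive subword of a reduced word is again reduced: if the block $s_{i_a} \cdots s_{i_b}$ admitted a shorter expression, substituting it back into $\tau$ would shorten the whole word, contradicting reducedness. Hence both $\sigma$ and the prefix $\tau' := s_{i_1} \cdots s_{i_b}$ are reduced as written.

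Next I would peel off the two ends in turn. Since $\tau'$ is obtained from the reduced word of $\tau$ by truncating a suffix, a reduced expression of $\tau'$ occurs as a prefix of one for $\tau$, so $\tau' \leq_R \tau$. As $\tau \in \calA(\lambda,\mu)$ and \Cref{prop:ideal_conditions_on_lambda} guarantees that $\calA(\lambda,\mu)$ is an order ideal in the right weak order, it follows that $\tau' \in \calA(\lambda,\mu)$. Now $\sigma$ appears as a suffix of the reduced word $s_{i_1} \cdots s_{i_b}$ of $\tau'$, so $\sigma \leq_L \tau'$; invoking that $\calA(\lambda,\mu)$ is \emph{also} an order ideal in the left weak order then yields $\sigma \in \calA(\lambda,\mu)$, as desired.

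The only genuinely delicate point is the one flagged above: a consecutive subword must be handled by combining \emph{both} weak orders rather than a single one. Once the intermediate element $\tau'$ is introduced, each containment is an immediate application of the relevant prefix/suffix description together with the ideal property from \Cref{prop:ideal_conditions_on_lambda}, so no further computation is required.
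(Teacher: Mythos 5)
Your proposal is correct and takes essentially the same route as the paper: the paper writes $\tau=\tau_1\sigma\tau_2$ and passes through the same intermediate element $\tau_1\sigma$ (your $\tau'$), first using that $\calA(\lambda,\mu)$ is an order ideal in the right weak order to remove the suffix $\tau_2$, then that it is an order ideal in the left weak order to remove the prefix $\tau_1$. The only difference is cosmetic: you spell out the standard fact that a consecutive subword of a reduced word is itself reduced, which the paper leaves implicit.
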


\begin{proof}
Suppose $\sigma\in W$ and $\tau\in \calA(\lambda,\mu)$. If a reduced word of $\sigma$ is a consecutive subword of a reduced word of $\tau$, then $\tau=\tau_1\sigma\tau_2$ for some reduced words $\tau_1$ and $\tau_2$. Note, $\tau_1\sigma\leq_R \tau$. 
By \Cref{prop:ideal_conditions_on_lambda}, we have that $\tau_1\sigma\in\calA(\lambda,\mu)$. As $\sigma\leq_L \tau_1\sigma$, we similarly have that $\sigma\in\calA(\lambda,\mu)$.
\end{proof}

\begin{example}
    Using \Cref{fig:HasseDiagram}, let $\tau$ be the top element $s_1s_2s_3s_1s_2s_1$. We can also write $\tau$ as the reduced word $s_1s_2s_1s_3s_2s_1$. Since $\sigma=s_1s_3s_2$ is a consecutive subword of $\tau$ such that $\tau=s_1s_2\sigma s_1$, we have that $\tau\in\calA(\lambda,\mu)$ implies $\sigma\in \calA(\lambda,\mu)$ by \Cref{cor:subword_property}.
\end{example}

In the remainder of this section, we shift our attention to the elements comprising Weyl alternation sets. In particular, for $\lambda$ a dominant weight of a simple Lie algebra $\mathfrak{g}$ and $\mu$ a weight for which $\mathcal{A}(\lambda,\mu)$ is nonempty, we associate a unique subset $\BAS(\lambda,\mu)\subseteq \mathcal{A}(\lambda,\mu)$ consisting of what can be thought of as basic ``building blocks" for the elements of $\mathcal{A}(\lambda,\mu)$. The definition and study of these sets constitute the remainder of this section.

To define the collection $\BAS(\lambda,\mu)$, we introduce the following terms.

\begin{definition}\label{def:ind}
For an element $\sigma\in W$ of the Weyl group, define the \defn{influence} and \defn{extended influence} of $\sigma$, denoted $I(\sigma)$ and $\overline{I}(\sigma)$ respectively, by \begin{align*}
    I(\sigma)&=\{i:\text{$s_i$ is in a reduced word for $\sigma$}\},\mbox{ and}\\
    \overline{I}(\sigma)&=\{i:i\in I(\sigma)\text{ or $i$ is adjacent to some $j\in I(\sigma)$ in the Dynkin diagram}\}.
\end{align*}
We say that $I(\sigma)$ is \defn{connected} if the subgraph of the Dynkin diagram it induces is connected. 
\end{definition}

Note that, in type $A$, the extended influence $\overline{I}(\sigma)$ consists of all $i$ such that $i\in I(\sigma)$ or $i\pm1\in I(\sigma)$.

Next, using influence and extended influence, we define the following relation on elements of the Weyl group with respect to their influences and extended influences.

\begin{definition}  \label{def:independence} We say that $\sigma, \tau\in W$ are \defn{independent} if $I(\sigma)\cap\overline{I}(\tau)=\emptyset$. A set $S\subseteq W$ is called \defn{pairwise independent} if any pair of elements $\sigma\neq\tau$ in $S$ are independent.
\end{definition}

\begin{example} \label{ex:influence} Consider the type $A_7$ Dynkin diagrams in \Cref{fig:influence} with the influences \begin{align*}
    \textcolor{c4}{I(s_1s_2)=I(s_1s_2s_1)}&=\{1,2\},\\
    \textcolor{c2}{I(s_4s_6s_7)}&=\{4,6,7\},\\
    \textcolor{c1}{\overline{I}(s_1s_2)=\overline{I}(s_1s_2s_1)}&=\{1,2,3\},\mbox{ and}\\
    \textcolor{c3}{\overline{I}(s_4s_6s_7)}&=\{3,4,5,6,7\}
\end{align*} highlighted. 
Note, $I(s_1s_2)=I(s_1s_2s_1)$ is connected and $I(s_4s_6s_7)$ is disconnected. The elements $s_1s_2$ and $s_4s_6s_7$ are independent. 
    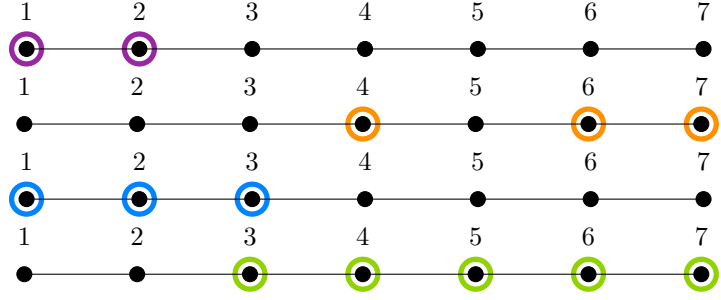
\begin{figure}
    \centering
\begin{tikzpicture}
    \foreach \i in {1,2,...,7} {
        \coordinate (v\i) at (\i*1.5,0);
    }

    \foreach \i in {1,2} {
        \node[circle, draw=c4, inner sep=4pt, line width=2pt] at (v\i) {};
    }

    \foreach \i in {1,2,...,7} {
            \node[circle, draw, fill=black, inner sep=2pt] at (v\i) {};
        \node at (\i*1.5,.5) {\i};
        }

    \draw (v1) -- (v7);
\end{tikzpicture}

\begin{tikzpicture}
    \foreach \i in {1,2,...,7} {
        \coordinate (v\i) at (\i*1.5,0);
    }


    \foreach \i in {4,6,7} {
        \node[circle, draw=c2, inner sep=4pt, line width=2pt] at (v\i) {};
    }

    \foreach \i in {1,2,...,7} {
            \node[circle, draw, fill=black, inner sep=2pt] at (v\i) {};
        \node at (\i*1.5,.5) {\i};
        }

    \draw (v1) -- (v7);
\end{tikzpicture}

\begin{tikzpicture}
    \foreach \i in {1,2,...,7} {
        \coordinate (v\i) at (\i*1.5,0);
    }

    \foreach \i in {1,2,3} {
        \node[circle, draw=c1, inner sep=4pt, line width=2pt] at (v\i) {};
    }

    \foreach \i in {1,2,...,7} {
            \node[circle, draw, fill=black, inner sep=2pt] at (v\i) {};
        \node at (\i*1.5,.5) {\i};
        }

    \draw (v1) -- (v7);
\end{tikzpicture}

\begin{tikzpicture}
    \foreach \i in {1,2,...,7} {
        \coordinate (v\i) at (\i*1.5,0);
    }

    \foreach \i in {3,4,5,6,7} {
        \node[circle, draw=c3, inner sep=4pt, line width=2pt] at (v\i) {};
    }

    \foreach \i in {1,2,...,7} {
            \node[circle, draw, fill=black, inner sep=2pt] at (v\i) {};
        \node at (\i*1.5,.5) {\i};
        }

    \draw (v1) -- (v7);
\end{tikzpicture}
\caption{The type $A_7$ Dynkin diagram with some influences and extended influences highlighted. }\label{fig:influence}
\end{figure}
\end{example}

\newcommand{\typeDfivemacro}[2]{\begin{tikzpicture}
    \foreach \i in {1,2,3} {
        \coordinate (v\i) at (\i*1.2,0);
    }

    \coordinate (v4) at (4.5, .7);
    \coordinate (v5) at (4.5, -.7);

    \foreach \i in {#2} {
        \node[circle, draw=#1, inner sep=4pt, line width=2pt] at (v\i) {};
    }

    \foreach \i in {1,2,3} {
            \node[circle, draw, fill=black, inner sep=2pt] at (v\i) {};
        \node at (\i*1.2,.5) {\i};
        }

         \node[circle, draw, fill=black, inner sep=2pt] at (v4) {};
        \node at (5,.7) {4};
         \node[circle, draw, fill=black, inner sep=2pt] at (v5) {};
        \node at (5,-.7) {5};

    \draw (v1) -- (v3);
    \draw (v3) -- (v4);
    \draw (v3) -- (v5);
     \path (current bounding box.south west) +(-.2,-.2) (current bounding box.north east) +(.2,.2);
\end{tikzpicture}}

\begin{example}
In \Cref{fig:anotherex of influence}, we consider the influence and extended influence of two elements in type $D_5$.
    \begin{table}[h!]
        \centering
        \begin{tabular}{c|c|c}
         $\sigma$ & $I(\sigma)$ & $\overline{I}(\sigma)$\\\hline
         \raisebox{1cm}{$s_3s_4$} & $\typeDfivemacro{c4}{3,4}$ & $\typeDfivemacro{c3}{2,3,4,5}$ \\ \hline
         \raisebox{1cm}{$s_2$}  & $\typeDfivemacro{c4}{2}$ & $\typeDfivemacro{c3}{1,2,3}$ 
         \end{tabular}
\caption{Examples of influence and extended influence for elements in the Weyl group of type $D_5$.}
\label{fig:anotherex of influence}
    \end{table}
    Note, $I(s_3s_4)\cap \overline{I}(s_2)=\{3\}\neq\emptyset$, so $s_3s_4$ and $s_2$ are not independent.
\end{example}

The following result concerning influence is useful in subsequent results.

\begin{lemma}\label{lem:properties_of_influence}
    Let $\sigma,\tau\in W$.
    \begin{enumerate}
        \item If $\sigma=s_{i_1}s_{i_2}\cdots s_{i_\ell}$ is a reduced word, then \begin{align*}
        I(\sigma)=\{i_j:1\leq j\leq\ell\}.
    \end{align*}
        \item If $I(\sigma)=I(\tau)$ and $\sigma(\alpha_i)=\tau(\alpha_i)$ for all $i\in I(\sigma)$, then $\sigma=\tau$.
        \item If $\{\sigma_1,\sigma_2,\ldots,\sigma_k\}\subseteq W$ is pairwise independent, then \begin{align*}
            I(\sigma_m)\subseteq I(\sigma_1\sigma_2\cdots\sigma_k)
        \end{align*} for all $1\leq m\leq k$.
    \end{enumerate}
\end{lemma}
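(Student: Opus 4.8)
The plan is to prove the three parts in order, leaning on two standard facts about the Weyl group: the word property (Matsumoto--Tits) for reduced expressions, and the faithfulness of the action of $W$ on $\R^d$. For part (1), I would invoke the word property \cite[Theorem 3.3.1]{bjorner2005combinatorics}: any two reduced words for $\sigma$ are connected by a sequence of braid moves, each replacing a factor $s_is_js_i\cdots$ by $s_js_is_j\cdots$ (with $m_{i,j}$ letters on each side). Since every such move replaces a factor using exactly the letters $i$ and $j$ by another factor using exactly the letters $i$ and $j$, it leaves the set of indices appearing in the word unchanged. Hence all reduced words for $\sigma$ involve the same set of indices; this set is precisely $I(\sigma)$, which simultaneously shows that $I(\sigma)$ is well defined and establishes the claim.

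For part (2), I would set $J=I(\sigma)=I(\tau)$. By part (1), both $\sigma$ and $\tau$ admit reduced words using only generators $s_j$ with $j\in J$, so both lie in the standard parabolic subgroup $W_J=\langle s_j: j\in J\rangle$, and therefore $w:=\tau^{-1}\sigma\in W_J$. The hypothesis $\sigma(\alpha_i)=\tau(\alpha_i)$ for all $i\in J$ gives $w(\alpha_i)=\alpha_i$ for all $i\in J$. I would then use the orthogonal decomposition $\R^d=V_J\oplus V_J^\perp$, where $V_J=\mathrm{span}\{\alpha_j:j\in J\}$: the subspace $V_J$ is $W_J$-invariant, and each generator $s_j$ with $j\in J$ fixes $V_J^\perp$ pointwise (every vector orthogonal to all $\alpha_j$ is fixed by every $s_j$), so every element of $W_J$, in particular $w$, is determined by its restriction to $V_J$. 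Since $w$ fixes the spanning set $\{\alpha_j:j\in J\}$ of $V_J$, it acts as the identity on $V_J$, hence on all of $\R^d$; as $W$ acts faithfully on $\R^d$, we conclude $w=\mathrm{id}$, i.e., $\sigma=\tau$.

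For part (3), the key is to upgrade pairwise independence to a direct-product statement. If $a\neq b$, $i\in I(\sigma_a)$, and $j\in I(\sigma_b)$, then independence gives $i\notin\overline{I}(\sigma_b)$, which forces $i$ and $j$ to be nonadjacent in the Dynkin diagram; consequently the sets $I(\sigma_1),\dots,I(\sigma_k)$ are pairwise disjoint with no edges between distinct ones. Setting $J=\bigsqcup_m I(\sigma_m)$, the induced subdiagram is the disjoint union of the subdiagrams on the $I(\sigma_m)$, so $W_J\cong W_{I(\sigma_1)}\times\cdots\times W_{I(\sigma_k)}$ and the length function is additive across the factors (using that length in a standard parabolic agrees with length in $W$). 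Taking a reduced word for each $\sigma_m$, which lies in $W_{I(\sigma_m)}$ by part (1), and concatenating then produces a word of length $\sum_m\ell(\sigma_m)=\ell(\sigma_1\cdots\sigma_k)$, hence a reduced word for the product. Applying part (1) to this reduced word yields $I(\sigma_1\cdots\sigma_k)=\bigsqcup_m I(\sigma_m)$, which contains each $I(\sigma_m)$, as desired.

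I expect the main obstacle to be part (3), specifically the justification that the concatenation of reduced words for the $\sigma_m$ remains reduced. The cleanest route is the direct-product/additive-length observation above, which converts the combinatorial independence hypothesis into a statement about lengths; attempting instead to track cancellations directly would be considerably messier. Parts (1) and (2), by contrast, are essentially careful bookkeeping built on top of standard Coxeter-group facts.
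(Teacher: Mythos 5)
Your proof is correct. Parts (1) and (2) follow the paper's own route in all essentials: (1) is the word property for reduced expressions, and (2) places $\sigma$ and $\tau$ in the standard parabolic subgroup $W_J$ and appeals to faithfulness of the geometric action; your only deviation is to observe that elements of $W_J$ fix $V_J^\perp$ pointwise and then conclude via faithfulness of $W$ on $\mathbb{R}^d$, where the paper instead quotes faithfulness of $W_J$ acting on $V_J$. Part (3) is genuinely different. The paper stays at the level of words and argues via Tits' theorem: if the concatenation of reduced words for the $\sigma_m$ were not reduced, nil-moves and braid-moves would shorten it, and the paper claims every such move must take place inside a single $\sigma_m$, contradicting reducedness of that factor. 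You instead translate pairwise independence into a disconnected subdiagram on $J=\bigsqcup_m I(\sigma_m)$, hence a direct-product decomposition $W_J\cong W_{I(\sigma_1)}\times\cdots\times W_{I(\sigma_k)}$ with additive length, so the concatenation is reduced by a length count and part (1) finishes the argument. Your version is arguably tighter: the paper's assertion that every move occurs within one factor is slightly informal, since commutation moves (braid moves with $m_{i,j}=2$) can legitimately interleave letters from different factors, and making that step airtight requires noting that any nil-move must still cancel two letters of the same factor because the influence sets are disjoint. The cost of your route is that it imports two standard parabolic-subgroup facts (ambient and parabolic lengths agree; a disconnected diagram gives a direct product with additive length), whereas the paper's argument needs only the word theorem.
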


\begin{proof} For item (1): This property follows from \cite[Corollary 2.2.3]{bjorner2005combinatorics}, which states that every reduced expression of $\sigma$ contains a reduced expression of $\tau$ if and only if some reduced expression of $\sigma$ contains a reduced expression of $\tau$.

    For item (2): Suppose $I(\sigma)=I(\tau)$ and let $I=I(\sigma)=I(\tau)$. Then $\sigma,\tau\in W_I$ where $W_I$ is the subgroup of $W$ generated by $\{s_i:i\in I\}$. Since the geometric representation of $W_I$ is isomorphic to the action of $W_I$ on $V_I=\mathbb{R}\{\alpha_i:i\in I\}$ (for a reference we recommend \cite[Section 5.5]{humphreys1990reflection}) and the geometric representation of any finite Coxeter group is faithful, $W_I$ acts faithfully on $V_I$. 
    Hence, if $\sigma(\alpha_i)=\tau(\alpha_i)$ for all $i\in I$, we have that $\sigma=\tau$.

    For item (3): Suppose that $\sigma_1,\sigma_2,\ldots,\sigma_k$ are reduced expressions and that the set $\{\sigma_1, \sigma_2,\ldots,\sigma_k\}$ is pairwise independent. 
    If $\sigma_1\sigma_2\cdots\sigma_k$ were not a reduced word, we could apply a sequence of nil-moves and braid-moves as in \cite[Theorem 3.3.1]{bjorner2005combinatorics} to arrive at a reduced expression. Since these operations only apply to strings of generators that are adjacent in the Dynkin diagram, each of these moves must occur within a $\sigma_i$. 
    But each $\sigma_i$ is assumed to be reduced, so $\sigma_1\sigma_2\cdots\sigma_k$ is a reduced expression. 
    Hence, if $s_i$ occurs in $\sigma_m$ for some $m, 1 \le m \le k$, then it occurs in the product $\sigma_1\sigma_2\cdots\sigma_k$ and property (3) follows.
\end{proof}

We now define the set $\BAS(\lambda,\mu)$.

\begin{definition}
    Let $\lambda$ be a dominant integral weight of a simple Lie algebra $\mathfrak{g}$ and $\mu$ a weight such that $\calA(\lambda,\mu)$ is nonempty. 
    Then $\BAS(\lambda,\mu)$ is the unique subset $S\subseteq \calA(\lambda,\mu)$ where $1\notin S$ such that each $b\in S$ has connected influence and any element $\sigma\in \calA(\lambda,\mu)$ can be written as a product of elements from a pairwise independent subset of $S$. 
    We refer to the elements of $\BAS(\lambda,\mu)$ as the \defn{basic allowable subwords} for the pair $\lambda$ and $\mu$.
\end{definition}

Next, we establish that the sets $\BAS(\lambda,\mu)$ are well-defined and that all elements of $\calA(\lambda,\mu)$ can be formed uniquely as products of elements belonging to pairwise independent subsets of $\BAS(\lambda,\mu)$.

\begin{theorem}\label{thm:characterize_BAS}
    Let $\lambda$ be a dominant integral weight of a simple Lie algebra $\mathfrak{g}$ and $\mu$ a weight such that $\calA(\lambda,\mu)$ is nonempty. Then
    
    \begin{enumerate}[(1)]
        \item there exists a unique subset $S\subseteq \calA(\lambda,\mu)$ where $1\notin S$ such that each $b\in S$ has connected influence and any element $\sigma\in \calA(\lambda,\mu)$ can be written as a product of elements from a pairwise independent subset of $S$, and 
        \item there is a bijection between elements of $\calA(\lambda,\mu)$ and pairwise independent subsets of $S$. In particular, each pairwise independent subset of $S$ corresponds to the element in $\calA(\lambda,\mu)$ that can be written as the product of elements of the subset.
    \end{enumerate}
\end{theorem}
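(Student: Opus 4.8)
The plan is to exhibit the set $S$ explicitly as
\[
S = \{\sigma \in \calA(\lambda,\mu) : \sigma \neq 1 \text{ and } I(\sigma) \text{ is connected}\},
\]
and to derive everything from two inputs already in hand: the order-ideal theorem \Cref{prop:ideal_conditions_on_lambda} (through \Cref{cor:subword_property}) and the influence lemma \Cref{lem:properties_of_influence}. First I would record that since $\calA(\lambda,\mu)$ is a nonempty order ideal in the weak order and $1$ is the minimum of that order, we have $1\in\calA(\lambda,\mu)$; writing $\xi_0=\lambda-\mu=(\lambda+\rho)-\rho-\mu$, this says $\xi_0$ is a nonnegative integer combination of simple roots.

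The backbone is a canonical factorization. Given $\sigma\in\calA(\lambda,\mu)$, I would decompose its influence $I(\sigma)$ into the connected components $C_1,\dots,C_m$ of the induced Dynkin subgraph. Because indices in distinct components are nonadjacent, the reflection subgroup $W_{I(\sigma)}$ is the internal direct product $W_{C_1}\times\cdots\times W_{C_m}$, so $\sigma$ factors uniquely as $\sigma=b_1\cdots b_m$ with $b_t\in W_{C_t}$, the factors commuting. Sorting a reduced word of $\sigma$ by component (legal since cross-component generators commute) exhibits each $b_t$ as a consecutive subword, so $b_t\in\calA(\lambda,\mu)$ by \Cref{cor:subword_property}; moreover $I(b_t)=C_t$ is connected and nonempty, giving $b_t\in S$, and $\{b_1,\dots,b_m\}$ is pairwise independent since distinct components are disjoint and mutually nonadjacent. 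This proves existence in (1) and defines the forward map $\varphi\colon\sigma\mapsto\{b_1,\dots,b_m\}$ for (2). Uniqueness of $S$ is then short: any valid building-block set $S'$ lies in $S$ by definition, and conversely any $c\in S$ factors through $S'$ as $c=\prod_i b_i$, so by \Cref{lem:properties_of_influence} the product is reduced and $I(c)=\bigsqcup_i I(b_i)$ is a disjoint union of mutually nonadjacent nonempty sets; connectedness of $I(c)$ forces a single factor, whence $c\in S'$.

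The hard part will be the converse closure required for (2): that every product $b_1\cdots b_m$ of a pairwise independent subset of $S$ lies in $\calA(\lambda,\mu)$, since order ideals are not closed under products. I would prove the additivity identity
\[
(\lambda+\rho)-\sigma(\lambda+\rho)=\sum_{i=1}^m\bigl[(\lambda+\rho)-b_i(\lambda+\rho)\bigr]
\]
by induction on $m$: with $d_i=(\lambda+\rho)-b_i(\lambda+\rho)$ one checks $d_i\in V_{I(b_i)}=\mathbb{R}\{\alpha_j:j\in I(b_i)\}$, and pairwise independence forces every generator of $b_1$ to be orthogonal to each $\alpha_k$ with $k\in I(b_i)$ for $i\geq 2$, so $b_1$ fixes $\sum_{i\geq 2}d_i$ and the induction closes. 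Telescoping the covering estimate of \Cref{prop:ideal_conditions_on_lambda} along a chain from $1$ to $b_i$ shows each $d_i$ is a nonnegative integer combination of simple roots supported on $I(b_i)$; since the $I(b_i)$ are disjoint, $\sum_i d_i$ has disjoint supports. Comparing simple-root coordinates of $\sigma(\lambda+\rho)-\rho-\mu=\xi_0-\sum_i d_i$ with the nonnegativity of $\xi_0$ (from $1\in\calA(\lambda,\mu)$) and of each $\xi_i=b_i(\lambda+\rho)-\rho-\mu=\xi_0-d_i$ (from $b_i\in\calA(\lambda,\mu)$) shows every coordinate is a nonnegative integer, whence $\wp(\sigma(\lambda+\rho)-\rho-\mu)>0$ and $\sigma\in\calA(\lambda,\mu)$.

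Finally, for the bijection in (2) I would pair $\varphi$ with the product map $\psi\colon\{b_1,\dots,b_m\}\mapsto b_1\cdots b_m$, which is well defined and order-independent because independent elements commute, and which lands in $\calA(\lambda,\mu)$ by the closure just established. Then $\psi\circ\varphi=\mathrm{id}$ is immediate, and $\varphi\circ\psi=\mathrm{id}$ because for pairwise independent $b_i\in S$ the connected sets $I(b_i)$ are exactly the connected components of $I(\prod_i b_i)$, so uniqueness of the component factorization returns the original $b_i$. I expect the direct-product structure of $W_{I(\sigma)}$ and the telescoping estimate to be routine, with the genuine content concentrated in the additivity identity and the coordinatewise nonnegativity argument.
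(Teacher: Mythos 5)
Your proposal is correct, and it reroutes the two key mechanisms of the paper's argument while keeping the same external inputs (\Cref{prop:ideal_conditions_on_lambda}, \Cref{cor:subword_property}, and \Cref{lem:properties_of_influence}). The paper defines $S$ intrinsically as the connected-influence elements admitting no factorization into two nontrivial independent parts, produces the factorization of a general $\sigma\in\calA(\lambda,\mu)$ by greedily peeling off factors (descending in the right weak order), and proves uniqueness of the factorization by evaluating both products on simple roots and invoking faithfulness of the parabolic action, i.e.\ \Cref{lem:properties_of_influence} item (2). You instead take $S$ to be all non-identity elements of $\calA(\lambda,\mu)$ with connected influence --- the same set, since a connected influence set cannot split into two nonempty, disjoint, mutually nonadjacent pieces --- and you obtain existence \emph{and} uniqueness of the factorization simultaneously from the decomposition of $I(\sigma)$ into Dynkin-diagram components together with the direct-product structure $W_{I(\sigma)}=W_{C_1}\times\cdots\times W_{C_m}$, with \Cref{cor:subword_property} placing each component factor in $\calA(\lambda,\mu)$. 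This makes uniqueness of $S$ essentially formal (any valid $S'$ is sandwiched between your explicit set and itself) and avoids \Cref{lem:properties_of_influence} item (2) entirely, replacing it by uniqueness of direct-product factorizations. On the closure step --- that the product over a pairwise independent subset of $S$ stays in $\calA(\lambda,\mu)$ --- you and the paper share the same idea (coordinates of $\sigma(\lambda+\rho)-\rho-\mu$ split according to the disjoint supports $I(b_i)$), but the paper asserts this coordinate decomposition whereas you prove it, via the additivity identity $(\lambda+\rho)-\sigma(\lambda+\rho)=\sum_i\bigl[(\lambda+\rho)-b_i(\lambda+\rho)\bigr]$ and the telescoped covering estimate from \Cref{prop:ideal_conditions_on_lambda}; you also make explicit the fact, used silently in the paper, that $1\in\calA(\lambda,\mu)$ because $\calA(\lambda,\mu)$ is a nonempty order ideal. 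In short: the paper's route is shorter where it leans on faithfulness of the geometric representation; yours makes the factorization canonical and the closure computation self-contained.
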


\begin{proof}

    For item (1): Toward proving uniqueness, suppose that $S$ and $S'$ both satisfy the conditions in the statement of the theorem and let $b\in S$. 
    Since $b\in A(\lambda,\mu)$, we have that $b=b'_1b'_2\cdots b'_k$ for $\{b'_1,b'_2,\ldots,b'_k\}$ a pairwise independent subset of $S'$. By \Cref{lem:properties_of_influence} item (3), we have that $I(b'_m)\subseteq I(b)$ for all $1\leq m\leq k$. 
    Note that, since $I(b)$ must be connected as $b\in S$, if $I(b'_i)\neq I(b)$ for some $1\leq i\leq k$, then there must exist $b'_j$ for which $1\le j\neq i\le k$ and $I(b'_i)\cap \overline{I}(b'_j)\neq \emptyset$. 
    This contradicts our assumption that $\{b'_1,b'_2,\ldots,b'_k\}$ is a pairwise independent subset. 
    Thus, we must in fact have that $I(b)=I(b'_m)$ for all $1\leq m\leq k$. 
    Consequently, the only way that the set $\{b'_1,b'_2,\ldots,b'_k\}$ can be pairwise independent is if it consists of a single element $b'_1$. 
    Therefore, $b=b'_1\in S'$, so that $S\subseteq S'$. Similarly, $S'\subseteq S$, and $S=S'$.

    To prove that such a subset always exists, consider the set $S\subseteq \calA(\lambda,\mu)$ consisting of all elements $b$ with connected influence such that if $b=b_1b_2$ for $b_1$ and $b_2$ independent, then $b_1=1$ or $b_2=1$. 
    We claim that for any $\sigma\in \calA(\lambda,\mu)$, we have that $\sigma=b\sigma'$ where $b\in S$ and the elements $b$ and $\sigma'$ are independent. 
    Indeed, if $\sigma$ itself is not already an element of $S$, then $\sigma=\sigma_1\sigma_2$ with $\sigma_1<_R\sigma$ where the elements $\sigma_1$ and $\sigma_2$ are independent. If $\sigma_1\notin S$, then we can repeat this process. As the left factor decreases in the right weak order each time we repeat, eventually this process terminates with $\sigma = b\sigma'$ with $b\in S$ and the elements $b$ and $\sigma'$ independent. Iterating this process yields 
    \begin{align*}
\sigma=b_1b_2\cdots b_k,
    \end{align*} where the elements $b_i$ and $b_{i+1}b_{i+2}\cdots b_k$ are independent for all $1\leq i\leq k-1$. 
    By \Cref{lem:properties_of_influence} item (3), $I(b_j)\subseteq I(b_{i+1}b_{i+2}\cdots b_k)$ for $j>i$.
    Consequently, 
    \begin{align*}
        \overline{I}(b_i)\cap{I}(b_j)\subseteq \overline{I}(b_i)\cap I(b_{i+1}b_{i+2}\cdots b_k)=\emptyset.
    \end{align*} Hence, we have written $\sigma$ as a product of elements from a pairwise independent subset of $S$.

    For item (2): To prove that there is a bijection between pairwise independent subsets of $S$ and elements of $\calA(\lambda,\mu)$, we only need to demonstrate that the product of elements defining any pairwise independent subset of $S$ is in $\calA(\lambda,\mu)$ and that for any $\sigma\in \calA(\lambda,\mu)$, there exists a unique pairwise independent subset of $S$ for which $\sigma$ is a product of its elements.

    Let $B\subseteq S$ be a pairwise independent subset and let $\sigma=\prod_{b\in B}b$. Independent elements of $W$ necessarily commute, so this product is well-defined. For any $i, 1 \le i \le r$ (where $r$ is the rank of the Lie algebra), 
    the coefficient of $\alpha_i$ in $\sigma(\lambda+\rho)-\rho-\mu$
    is either: \begin{itemize}
        \item[(i)] the coefficient of $\alpha_i$ in
        $b(\lambda+\rho)-\rho-\mu$ for a unique $b\in B$ with $i\in I(b)$, or
        \item[(ii)] the coefficient of $\alpha_i$ in $\lambda-\mu$ if no such $b$ exists.
    \end{itemize}
    Since in (i), $b\in \calA(\lambda,\mu)$ and in (ii), $1\in \calA(\lambda,\mu)$, the coefficient of $\alpha_i$
    is nonnegative in either case. Hence, 
    $\sigma(\lambda+\rho)-\rho-\mu$ is a nonnegative linear combination of simple roots, and $\sigma\in \calA(\lambda,\mu)$.

    Suppose $\sigma=b_1b_2\cdots b_k=b'_1b'_2\cdots b'_{k'}$ where $\{b_1,b_2,\hdots,b_k\}$ and $\{b'_1,b'_2,\hdots,b'_{k'}\}$ are distinct, pairwise independent subsets of $S$. Fix $1\leq m\leq k$ and consider $i\in I(b_m)$. Necessarily, by \Cref{lem:properties_of_influence} item (3), $i\in I(b'_{m'})$ for exactly one $1\leq m'\leq k'$, hence $k\le k'$. Similarly, $k'\le k$, so $k=k'$. Since $I(b)$ is connected for all $b\in S$, we must have that $I(b_m)=I(b'_{m'})$. Without loss of generality, we can assume $m=m'$, so $I(b_m)=I(b'_m)$ for all $1\leq m\leq k$.
    
    Now for $j\in I(b_m)$, we have that \begin{align*}
        \sigma(\alpha_j)=b_1b_2\cdots b_k(\alpha_j)&=b_m(\alpha_j), \mbox{ and}\\
        \sigma(\alpha_j)=b'_1b'_2\cdots b'_{k}(\alpha_j)&=b'_m(\alpha_j).
    \end{align*}
    By \Cref{lem:properties_of_influence} item (2), $b_m=b'_m$. 
    Hence, the two products correspond to the same pairwise independent subset of~$S$.
\end{proof}

\begin{example}

As an example of Theorem \ref{thm:characterize_BAS}, we consider the case of $\lambda = \hroot, \mu = -\hroot$ for rank $r=4$. The set $\mathcal A(\lambda, \mu)$ consists of the following 11 elements of the Weyl group: \[1,~ s_1,~ s_2,~ s_3,~ s_4,~ s_1s_3,~ s_1s_4,~ s_2s_3,~ s_3s_2,~ s_2s_4,~ s_2s_3s_2.\] Each of these is a commuting product of the following 7 elements of the Weyl group:\[s_1,~ s_2,~ s_3,~ s_4,~ s_2s_3,~ s_3s_2,~ s_2s_3s_2,\] which comprise the set $\BAS(\lambda,\mu)$. Every element of the alternation set is a (possibly empty) commuting product of elements of $\BAS(\lambda,\mu)$, and each element of $\BAS(\lambda,\mu)$ has connected influence. We can draw a graph with vertex set given by these 7 elements with an edge drawn between them if they are not independent (see \Cref{fig:dependence_graph}). Since the nodes other than $s_1$ and $s_4$ in this graph form a 5-clique, any independent set contains at most one of these nodes. The total number of independent sets is thus 11: \begin{equation}
\emptyset,~ \{s_1\},~ \{s_4\},~ \{s_1, s_4\},~ \{s_2\},~ \{s_3\},~ \{s_2s_3\},~ \{s_3s_2\},~ \{s_2s_3s_2\},~ \{s_1, s_3\},~ \{s_2, s_4\}. \nonumber
\end{equation} If we multiply the elements within each independent set (all of which commute), we obtain each element of $\mathcal A(\hroot, -\hroot)$.

\end{example}

\begin{example}
      In type $A_8$, consider the dominant integral weights $\lambda$ and $\mu$ corresponding to the integer partitions $(4,3,1)$ and $(2,1^6)$ respectively. The set $\BAS(\lambda,\mu)$ consists of the following elements of $S_9$. \begin{center}
          \begin{tabular}{c|c}
              Length & Basic allowable subwords \\ \hline
              1 & $s_1$, $s_2$, $s_3$, $s_4$, $s_5$, $s_6$\\
              2 & $s_3s_4$, $s_4s_3$, $s_4s_5$, $s_5s_4$, $s_5s_6$\\
              3 & $s_3s_4s_3$, $s_3s_4s_5$, $s_3s_5s_4$, $s_4s_5s_4$, $s_4s_5s_6$\\
              4 & $s_3s_4s_5s_4$
          \end{tabular}
      \end{center}
\end{example}

\begin{example}
    In type $C_5$, consider the dominant integral weight \begin{align*}
        \lambda&=3\alpha_1+5\alpha_2+7\alpha_3+9\alpha_4+5\alpha_5,\text{ and the weight}\\
        \mu&=2\alpha_2+2\alpha_2+3\alpha_3+3\alpha_4+3\alpha_5.
    \end{align*} The set $\BAS(\lambda,\mu)$ consists of the following elements of the corresponding Weyl group.
    \begin{center}
          \begin{tabular}{c|c}
              Length & Basic allowable subwords \\ \hline
              1 & $s_2$, $s_3$, $s_4$, $s_5$\\
              2 & $s_2s_3$, $s_3s_2$, $s_3s_4$, $s_4s_3$, $s_4s_5$\\
              3 & $s_2s_3s_2$, $s_2s_4s_3$, $s_3s_4s_2$, $s_3s_4s_3$, $s_4s_3s_2$\\
              4 & $s_2s_4s_3s_2$, $s_3s_4s_3s_2$
          \end{tabular}
      \end{center}
\end{example}

\begin{figure}
    \centering
\begin{tikzpicture}
\node at (0, 2) [circle,draw=black,fill=white] (1){$s_1$};
\node at (2, 2) [circle,draw=black,fill=white] (2){$s_2$};
\node at (8, 2) [circle,draw=black,fill=white] (3){$s_3$};
\node at (10, 2) [circle,draw=black,fill=white] (4){$s_4$};
\node at (5, 2)
[circle,draw=black,fill=white] (s){$s_2s_3s_2$};
\node at (5, -1)
[circle,draw=black,fill=white] (j){$s_2s_3$};
\node at (5, 5)
[circle,draw=black,fill=white] (m){$s_3s_2$};
\draw (1) -- (2) -- (s) -- (j) -- (1) -- (m) -- (s) -- (2) -- (1);
\draw (4) -- (3) -- (s) -- (j) -- (4) -- (m) -- (s) -- (3) -- (4);
\draw (2) -- (m) -- (3) -- (j) -- (2);
\draw[-] (s) to[bend right = 20] (1);
\draw[-] (s) to[bend left = 20] (4);
\draw[-] (m) to[bend left = 30] (j);
\draw[-] (3) to[bend left = 30] (2);
\end{tikzpicture}
    \caption{A graph on the elements of $\BAS(\hroot,-\hroot)$ in rank $r=4$ with edges drawn between nonindependent elements.}
    \label{fig:dependence_graph}
\end{figure}

\begin{remark}
    \Cref{thm:characterize_BAS} shows that $\calA(\lambda,\mu)$ has the structure of an \textit{independence system} or \textit{abstract simplicial complex} because each element of $\calA(\lambda,\mu)$ corresponds to a pairwise independent subset of $\BAS(\lambda,\mu)$ and the property of being pairwise independent is downward-closed.
\end{remark}

The following proof, providing sufficient conditions for a set of words to be the set $\BAS(\lambda,\mu)$, generalizes \cite[Theorem 4.5]{Lauren}.

\begin{proposition}\label{prop:BAS_properties}
    Let $\lambda$ be a dominant integral weight and let $\mu$ be a weight such that $\calA(\lambda,\mu)\neq\emptyset$.
    Let $S \subseteq \calA(\lambda, \mu)$, where $1 \notin S$, be such that
    \begin{itemize}
        \item[(C1)] any simple transposition $s_i\in \calA(\lambda,\mu)$ is contained in $S$,
        \item[(C2)] for each $\sigma \in S$, its influence $I(\sigma)$ is connected, and
        \item[(C3)] if $\sigma$ and $\tau$ in $S$ are not independent, the product $\sigma\tau$ falls into one of the following cases: \begin{enumerate}
        \item $\sigma\tau\in S$,
        \item $\sigma\tau=\nu_1\nu_2\cdots\nu_m$ where
        $\{\nu_1,\nu_2,\ldots,\nu_m\}\subseteq S$ is a pairwise independent set and \\
        $\ell(\nu_1)+\ell(\nu_2)+\cdots+\ell(\nu_m)<\ell(\sigma)+\ell(\tau)$, or
        \item $\sigma\tau$ contains a word not in $\calA(\lambda,\mu)$ as a consecutive subword.
    \end{enumerate}
    \end{itemize}
    Then, $\BAS(\lambda,\mu)=S$.
\end{proposition}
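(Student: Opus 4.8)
The plan is to prove that the explicitly constructed set $S$ coincides with $\BAS(\lambda,\mu)$ by verifying that $S$ satisfies the two defining properties from \Cref{thm:characterize_BAS}(1): namely that every element of $S$ has connected influence (immediate from (C2)), and that every $\sigma\in\calA(\lambda,\mu)$ can be written as a product of elements from a pairwise independent subset of $S$. Since \Cref{thm:characterize_BAS}(1) guarantees that the set with these properties is \emph{unique}, establishing these two facts forces $S=\BAS(\lambda,\mu)$. Thus the real content is to show that $S$ generates all of $\calA(\lambda,\mu)$ via pairwise independent products, and conditions (C1)--(C3) are precisely the hypotheses engineered to make this possible.

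First I would set up an induction on length to show every $\sigma\in\calA(\lambda,\mu)$ factors as a pairwise independent product of elements of $S$. The base case is $\sigma=1$, the empty product. For the inductive step, take $\sigma\neq 1$ and let $\{b_1,b_2,\ldots,b_k\}$ be the pairwise independent subset of $\BAS(\lambda,\mu)$ whose product is $\sigma$, as furnished by \Cref{thm:characterize_BAS}. It suffices to show each $b_m$ is itself expressible as a pairwise independent product of elements of $S$, since the $b_m$ have pairwise disjoint (and non-adjacent) influences, so assembling these sub-factorizations preserves pairwise independence. Hence I may reduce to the case where $\sigma$ itself lies in $\BAS(\lambda,\mu)$, i.e., $\sigma$ has connected influence and is not a nontrivial independent product. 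The goal becomes showing such a $\sigma$ lies in $S$.

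The heart of the argument is to build $\sigma$ up from simple transpositions using (C3). I would argue by strong induction on $\ell(\sigma)$. If $\ell(\sigma)=1$ then $\sigma=s_i\in\calA(\lambda,\mu)$, so $\sigma\in S$ by (C1). For $\ell(\sigma)>1$, write a reduced word $\sigma=s_{i_1}\tau$ where $\tau$ is a shorter element; by \Cref{cor:subword_property} both $s_{i_1}$ and $\tau$ lie in $\calA(\lambda,\mu)$, and by the inductive hypothesis each is a pairwise independent product of elements of $S$. Multiplying these factorizations together, $\sigma$ becomes a product $\rho_1\rho_2\cdots\rho_t$ of elements of $S$; I must reduce this to a \emph{single} element of $S$. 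Whenever two factors $\rho_a,\rho_b$ are non-independent I invoke (C3): in case (1) their product is a single element of $S$, strictly reducing the factor count while keeping total length fixed; in case (2) it rewrites as a pairwise independent product of \emph{strictly smaller total length}, which I can recursively resolve by the length induction; and case (3) cannot occur because $\sigma\in\calA(\lambda,\mu)$ and \Cref{cor:subword_property} forbids any consecutive subword lying outside $\calA(\lambda,\mu)$. Applying these reductions and tracking that either total length or number of non-independent pairs strictly decreases, the process terminates with a pairwise independent product; since $\sigma$ has connected influence and is not a nontrivial independent product, this product must be the single element $\sigma$, forcing $\sigma\in S$.

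The main obstacle is controlling the termination and well-foundedness of the rewriting process in the last step: the reductions from (C3) can increase the number of factors (case (2)) even as they decrease total length, so I expect to need a carefully chosen monotone quantity—likely lexicographic on the pair (total length, number of non-independent pairs among factors)—to guarantee the procedure halts and lands on a pairwise independent factorization. A secondary subtlety is verifying that after fully resolving non-independences the resulting independent factors reassemble to $\sigma$ with the \emph{correct} connected influence, so that connectivity of $I(\sigma)$ genuinely forces a single factor rather than several with disconnected influences; here I would lean on \Cref{lem:properties_of_influence}(3) to track that influences of the factors union to $I(\sigma)$ and on the connectivity hypothesis (C2) together with the definition of $\BAS$ to conclude the factorization collapses to one element.
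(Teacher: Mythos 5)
Your proposal is correct and, at its core, runs on the same ingredients as the paper's proof: reduce via the uniqueness clause of \Cref{thm:characterize_BAS}(1) to showing that every $\sigma\in\calA(\lambda,\mu)$ is a product of a pairwise independent subset of $S$; seed a factorization of $\sigma$ into elements of $S$ using (C1) together with \Cref{cor:subword_property}; and resolve non-independent pairs with (C3), ruling out case (3) by \Cref{cor:subword_property}. The packaging, however, differs in two ways. First, where you iterate a rewriting procedure and must design a termination measure (your self-identified main obstacle), the paper makes termination a non-issue by an extremal choice: among all factorizations $\sigma=b_1b_2\cdots b_k$ over $S$, take one minimizing $\ell(b_1)+\cdots+\ell(b_k)$ and, among those, minimizing $k$; a non-independent pair then contradicts this minimality under cases (1) and (2) of (C3) and contradicts $\sigma\in\calA(\lambda,\mu)$ under case (3), with no iteration to track. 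Second, your detour through $\BAS(\lambda,\mu)$---factoring $\sigma$ into basic allowable subwords via \Cref{thm:characterize_BAS}(2), reducing to $\sigma\in\BAS(\lambda,\mu)$, and using connectivity to collapse the final factorization to a single element---is dead weight: your rewriting argument already produces a pairwise independent factorization over $S$ of an \emph{arbitrary} element of $\calA(\lambda,\mu)$, which is all that uniqueness requires, so you never need to show that basic allowable subwords land in $S$ as single factors. (Note also that ``not a nontrivial independent product'' is not part of the stated definition of $\BAS(\lambda,\mu)$; connectivity alone suffices for your collapse step, so this slip is harmless.) One detail you omit but the paper records: (C3) applies to a product of two \emph{adjacent} factors, so an offending pair must first be made adjacent; choosing a non-independent pair at minimal distance in the product, every intervening factor is independent of, hence commutes with, the left member of the pair, which justifies the rearrangement in both your argument and the paper's.
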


\begin{proof}
    By \Cref{thm:characterize_BAS}, it suffices to show that every element $\sigma\in \calA(\lambda,\mu)$ can be written as the product of a pairwise independent subset of $S$.

    Let $\sigma\in \calA(\lambda,\mu)$. By (C1) and \Cref{cor:subword_property}, any simple transposition appearing in a reduced expression for $\sigma$ is contained in $S$. Thus, we can write $\sigma$ as a product of elements of $S$. Choose a product $\sigma=b_1b_2\cdots b_k$ with $\ell(b_1)+\ell(b_2)+\cdots+\ell(b_k)$ minimal, and amongst these choose the product with $k$ minimal. Suppose there were two elements $b_i$ and $b_j$ in this product that are not independent. Without loss of generality, we can assume $b_i$ and $b_j$ are adjacent in the product (as an element could be swapped with any independent element adjacent to it). 
    By assumption, there are three possible cases. In case (1), we could replace $b_ib_j$ with a single element from $S$. Since $\ell(\sigma\tau)\leq\ell(\sigma)+\ell(\tau)$, this replacement must either reduce the sum of the lengths, contradicting its minimality or leave the sum of the lengths unchanged, contradicting the assumption that $k$ was minimal among products with that length sum. In case (2), we can replace $b_ib_j$ with a pairwise independent product decreasing the overall length sum, contradicting minimality of the length sum. In case (3), $b_ib_j$ contains a subword not in $\calA(\lambda,\mu)$. By \Cref{cor:subword_property}, this contradicts $\sigma\in \calA(\lambda,\mu)$. Hence, any pair $b_i, b_j$ in the product is independent, so $\{b_1,b_2,\ldots,b_k\}$ is a pairwise independent subset of $S$.
\end{proof}

Finally, in Proposition~\ref{prop:MovingToLargerWeights}, we relate the sets $\BAS(\lambda,\mu)$ and $\BAS(\lambda,\nu)$ for a dominant integral weight $\lambda$ and integral weights $\mu$ and $\nu$ satisfying $\nu\leq\mu$. To do so, we require the following lemma.

\begin{lemma}\label{lem:ContainmentsOfAltSets}
    Let $\lambda$, $\mu$, $\nu$ be integral weights. If $\nu\leq\mu$, then $\mathcal{A}(\lambda,\mu)\subseteq \mathcal{A}(\lambda,\nu)$.
\end{lemma}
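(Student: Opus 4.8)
The plan is to unwind the definition of the Weyl alternation set and to exploit the fact that the support of Kostant's partition function is closed under addition. Fix $\sigma\in\calA(\lambda,\mu)$; by definition this means $\wp(\sigma(\lambda+\rho)-\mu-\rho)>0$, i.e., the weight $\xi:=\sigma(\lambda+\rho)-\mu-\rho$ can be written as a sum of positive roots with nonnegative integer coefficients. The goal is to show $\sigma\in\calA(\lambda,\nu)$, equivalently that $\sigma(\lambda+\rho)-\nu-\rho$ admits such an expression as well.

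The key observation is the decomposition
\[
\sigma(\lambda+\rho)-\nu-\rho \;=\; \bigl(\sigma(\lambda+\rho)-\mu-\rho\bigr)+(\mu-\nu).
\]
The first summand is $\xi$, which is already a nonnegative integer combination of positive roots. For the second summand I would invoke the hypothesis $\nu\leq\mu$: by the definition of the higher-than relation, $\mu-\nu$ is a nonnegative linear combination of positive roots. Adding the two expressions coefficient-by-coefficient then exhibits $\sigma(\lambda+\rho)-\nu-\rho$ as a sum of positive roots with nonnegative integer coefficients, whence $\wp(\sigma(\lambda+\rho)-\nu-\rho)>0$ and $\sigma\in\calA(\lambda,\nu)$. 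Since $\sigma$ was arbitrary, $\calA(\lambda,\mu)\subseteq\calA(\lambda,\nu)$.

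There is no substantial obstacle here; the only point requiring care is the integrality of the coefficients appearing in $\mu-\nu$, so that the term-by-term addition genuinely lands in the support of $\wp$ rather than merely in its nonnegative real cone. To settle this cleanly I would note that every positive root expands into the simple roots with nonnegative integer coefficients, so a nonnegative combination of positive roots is in particular a nonnegative combination of simple roots; because $\mu$ and $\nu$ are integral weights, $\mu-\nu$ lies in the root lattice and these simple-root coefficients are forced to be nonnegative integers, and the simple roots are themselves positive roots. Equivalently, one may phrase the whole argument as: $\wp(\xi)>0$ and $\wp(\mu-\nu)>0$ (where $\wp(0)=1$ by the empty-sum convention when $\mu=\nu$), and the set $\{\eta : \wp(\eta)>0\}$ is closed under addition, which is precisely the coefficient-wise combination used above.
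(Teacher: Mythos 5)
Your core argument is correct and is essentially the paper's own proof: the paper first establishes the claim that $\alpha\leq\beta$ with $\wp(\alpha)>0$ implies $\wp(\beta)>0$ (by completing a vector partition of $\alpha$ with parts equal to simple roots), and then applies it to $\sigma(\lambda+\rho)-\mu-\rho\leq\sigma(\lambda+\rho)-\nu-\rho$; your decomposition $\sigma(\lambda+\rho)-\nu-\rho=\bigl(\sigma(\lambda+\rho)-\mu-\rho\bigr)+(\mu-\nu)$ is the same argument written additively.

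One caveat about your final paragraph: the assertion that ``because $\mu$ and $\nu$ are integral weights, $\mu-\nu$ lies in the root lattice'' is false. Integral weights form the weight lattice, which in general strictly contains the root lattice, and differences of integral weights need not lie in the root lattice; in type $A_1$, the fundamental weight $\omega=\tfrac{1}{2}\alpha_1$ and $0$ are both integral, yet $\omega$ is not in the root lattice. So integrality alone cannot force the coefficients of $\mu-\nu$ to be integers. The issue is instead settled by the reading of the order $\leq$: the paper (in its proof, where it writes $\beta=\alpha+\sum_i c_i\alpha_i$ with $c_i\in\Z_{\geq0}$) interprets $\nu\leq\mu$ as meaning $\mu-\nu$ is a \emph{nonnegative integer} combination of positive roots, and under that convention your worry evaporates and no patch is needed. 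Under a genuinely real-coefficient reading, the lemma itself would fail: in $A_1$ take $\lambda=\omega$, $\mu=-\omega$, $\nu=-\alpha_1$, so that $\mu-\nu=\tfrac{1}{2}\alpha_1$; then $1\in\calA(\lambda,\mu)$ because $\lambda-\mu=\alpha_1$, but $1\notin\calA(\lambda,\nu)$ because $\lambda-\nu=\tfrac{3}{2}\alpha_1$ has $\wp$-value $0$. So your proof stands exactly when the paper's does, but the root-lattice justification should be deleted and replaced by an appeal to the integer-coefficient convention for $\leq$.
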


\begin{proof}
    First, we claim that if $\alpha$ and $\beta$ are integral weights and $\alpha\leq\beta$, then $\wp(\alpha)>0$ implies that $\wp(\beta)>0$. Indeed, in this case, $\beta=\alpha+\sum_{i=1}^n c_i\alpha_i$ where $c_i\in\Z_{\geq0}$ for each $i$. Consequently, a vector partition of $\alpha$ can be completed to a vector partition of $\beta$ by adding $c_i$ parts equal to $\alpha_i$ for each $i$. 
    
    Now, suppose $\sigma\in\mathcal{A}(\lambda,\mu)$. Then $\wp(\sigma(\lambda+\rho)-\mu-\rho)>0$.
    Because $\nu\leq\mu$, we have that $\sigma(\lambda+\rho)-\mu-\rho\leq \sigma(\lambda+\rho)-\nu-\rho$. Hence, $\wp(\sigma(\lambda+\rho)-\nu-\rho)>0$ and $\sigma\in\mathcal{A}(\lambda,\nu)$, completing the proof.
\end{proof}

\begin{proposition}\label{prop:MovingToLargerWeights}
Let $\lambda$ be an integral dominant weight, and let $\mu$ and $\nu$ be two other integral weights such that $\nu\leq\mu$. 
Then, $\BAS(\lambda,\mu) = \BAS(\lambda,\nu) \cap \mathcal{A}(\lambda,\mu)$. 
\end{proposition}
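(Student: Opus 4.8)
The plan is to reduce both sides of the equality to a common, weight-independent description of the basic allowable subwords, exploiting the fact that the only place $\mu$ enters the definition of $\BAS(\lambda,\mu)$ is through membership in the alternation set. First I would recall the explicit characterization of $\BAS(\lambda,\mu)$ that is produced by the existence argument inside the proof of \Cref{thm:characterize_BAS}: there, the set satisfying the conditions of the theorem is constructed as the collection of all $b\in\calA(\lambda,\mu)$ with $b\neq 1$, with $I(b)$ connected, and with the property that whenever $b=b_1b_2$ for independent $b_1,b_2$, one of $b_1,b_2$ is the identity. By the uniqueness part of the same theorem, this constructed set \emph{is} $\BAS(\lambda,\mu)$. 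The crucial observation is that, apart from the single clause ``$b\in\calA(\lambda,\mu)$,'' every condition in this list is a statement about $b$ as an element of $W$ alone: connectedness of $I(b)$ and non-factorizability into nontrivial independent pieces make no reference to $\lambda$ or $\mu$. I would package this conjunction of weight-independent conditions as a property $P(b)$, so that $\BAS(\lambda,\mu)=\{b\in\calA(\lambda,\mu):P(b)\}$ and likewise $\BAS(\lambda,\nu)=\{b\in\calA(\lambda,\nu):P(b)\}$.

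With this in hand the remainder is a short set-theoretic manipulation. Since $\nu\leq\mu$, \Cref{lem:ContainmentsOfAltSets} gives the containment $\calA(\lambda,\mu)\subseteq\calA(\lambda,\nu)$. Intersecting the description of $\BAS(\lambda,\nu)$ with $\calA(\lambda,\mu)$ then yields
$\BAS(\lambda,\nu)\cap\calA(\lambda,\mu)=\{b\in\calA(\lambda,\nu)\cap\calA(\lambda,\mu):P(b)\}=\{b\in\calA(\lambda,\mu):P(b)\}=\BAS(\lambda,\mu)$,
which is exactly the claimed equality. I would also dispose of the degenerate case at the outset: if $\calA(\lambda,\mu)$ is empty the statement is trivial, since both sides are empty, so one may assume $\calA(\lambda,\mu)\neq\emptyset$; the containment from \Cref{lem:ContainmentsOfAltSets} then guarantees $\calA(\lambda,\nu)\neq\emptyset$ as well, so both $\BAS$ sets are genuinely defined.

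The one genuinely substantive step, and the place where care is required, is justifying the identity $\BAS(\lambda,\mu)=\{b\in\calA(\lambda,\mu):P(b)\}$, that is, confirming that the weight-dependence of $\BAS$ is confined entirely to the alternation-set membership clause. I expect this to be the main (and essentially only) obstacle: it amounts to reading off from the existence portion of \Cref{thm:characterize_BAS} that the set it constructs is precisely the set of connected-influence, independence-irreducible, non-identity elements of $\calA(\lambda,\mu)$, and then invoking the uniqueness portion to identify this constructed set with $\BAS(\lambda,\mu)$. Once that intrinsic characterization is pinned down, no root-system or partition-function computation is needed—the whole proposition follows from the containment $\calA(\lambda,\mu)\subseteq\calA(\lambda,\nu)$ supplied by \Cref{lem:ContainmentsOfAltSets}.
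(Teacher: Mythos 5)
Your proof is correct, but it takes a genuinely different route from the paper's. The paper sets $S=\BAS(\lambda,\nu)\cap\calA(\lambda,\mu)$ and verifies that $S$ satisfies the sufficient conditions (C1)--(C3) of \Cref{prop:BAS_properties}, invoking \Cref{lem:ContainmentsOfAltSets} at each step to transfer information from the $\nu$-alternation set to the $\mu$-alternation set; condition (C3) in particular forces a run through the three subcases for products of nonindependent elements. You instead extract an intrinsic characterization of the basic allowable subwords: the existence construction inside the proof of \Cref{thm:characterize_BAS} exhibits the set of all $b\in\calA(\lambda,\mu)$ with $b\neq 1$, connected influence, and no nontrivial independent factorization as a set satisfying the theorem's conditions, and the uniqueness clause then identifies this set with $\BAS(\lambda,\mu)$. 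Since your property $P(b)$ makes no reference to the weights, the proposition collapses to the set identity $\{b\in\calA(\lambda,\nu):P(b)\}\cap\calA(\lambda,\mu)=\{b\in\calA(\lambda,\mu):P(b)\}$, which follows from the containment $\calA(\lambda,\mu)\subseteq\calA(\lambda,\nu)$ of \Cref{lem:ContainmentsOfAltSets}. What your approach buys is brevity and conceptual clarity: it makes explicit that the weight-dependence of $\BAS$ is confined to the alternation-set membership clause, and it avoids the case analysis entirely. Its one cost is that it rests on the construction \emph{inside} the proof of \Cref{thm:characterize_BAS} rather than on any stated result, so to be self-contained you should record the intrinsic characterization as a standalone lemma; incidentally, that characterization simplifies, since for $b\neq 1$ connected influence already excludes a nontrivial independent factorization (such a factorization $b=b_1b_2$ is reduced by the argument in \Cref{lem:properties_of_influence}, so it would split $I(b)$ into two nonempty, nonadjacent pieces). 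The paper's argument, by contrast, uses only stated results, at the price of the three-case verification; your explicit treatment of the degenerate empty case is a minor point of extra care that the paper leaves implicit.
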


\begin{proof}
Let $S = \BAS(\lambda,\nu) \cap \mathcal{A}(\lambda,\mu)$. We show $S = \BAS(\lambda,\mu)$ by showing $S$ satisfies the three properties in \Cref{prop:BAS_properties}.
\begin{enumerate}
    \item[(C1)]  If $s_i \in\mathcal{A}(\lambda,\mu)$, then 
    $s_i \in \mathcal{A}(\lambda,\nu)$
    by \Cref{lem:ContainmentsOfAltSets}.
    By \Cref{prop:BAS_properties}, $s_i \in \BAS(\lambda,\nu)$. Therefore, $s_i \in S$. 
    \item[(C2)] Since $S \subseteq \BAS(\lambda,\nu)$, we automatically know each $\sigma \in S$ has connected influence. 
    \item[(C3)] Let $\sigma,\tau$ be a pair of nonindependent elements of $S$. Since $\sigma,\tau \in \BAS(\lambda,\nu)$, \Cref{prop:BAS_properties} gives three possibilities for the product $\sigma\tau$.
    \begin{enumerate}[(1)]
        \item If $\sigma\tau \in \BAS(\lambda,\nu)$, then either $\sigma\tau \in S$ or $\sigma\tau \notin \mathcal{A}(\lambda,\mu)$. If we are in the latter case, then $\sigma\tau$ necessarily contains a consecutive subword that is not in $\mathcal{A}(\lambda,\mu)$. Thus, we are either in item (1) or item (3) from Condition (C3) of \Cref{prop:BAS_properties} with respect to $S$.
        \item If $\sigma\tau = \beta_1\beta_2 \cdots \beta_m$ where $\{\beta_1,\beta_2,\ldots,\beta_m\} \subseteq \BAS(\lambda,\nu)$ is a pairwise independent set and $\ell(\beta_1) + \ell(\beta_2) + \cdots + \ell(\beta_m) < \ell(\sigma) + \ell(\tau)$, then either each $\beta_i \in S$ or there exists $\beta_j \notin \mathcal{A}(\lambda,\mu)$.  In the latter case, $\beta_j$ contains a consecutive subword not in $\mathcal{A}(\lambda,\mu)$, hence $\sigma\tau$ does as well. Thus, we are either in item (2) or (3) from Condition (C3) of \Cref{prop:BAS_properties} with respect to $S$.
        \item If $\sigma \tau$ contains a consecutive subword which is not in $\mathcal{A}(\lambda,\nu)$, then
        $\sigma \tau$ also contains a consecutive subword which is not in $\mathcal{A}(\lambda,\mu)$ by \Cref{lem:ContainmentsOfAltSets}. Therefore, we are in item (3) from Condition (C3) of \Cref{prop:BAS_properties} with respect to $S$.
    \end{enumerate} 
    Therefore, the product of any pair of nonindependent elements $\sigma,\tau \in S$ fits into one of the three items of Condition (C3) of \Cref{prop:BAS_properties} with respect to $S$. \qedhere 
\end{enumerate}
\end{proof}

\section{The Lie algebra of type \texorpdfstring{$A$}{A}}\label{sec:NegAlpha}

In contrast with the type-agnostic perspective of the previous section, we focus in this section exclusively on the Lie algebra of type $A$.
Let $M_{r+1}(\C)$ denote the collection of $(r+1)\times (r+1)$ 
complex-valued matrices and recall that $\mathfrak{sl}_{r+1}(\C) = \{X \in M_{r+1}(\C): \text{Tr}(X) = 0\}$ is the Lie algebra of type $A_r$, with $r\geq 1$.
Recall that $\mathfrak{sl}_{r+1}(\C)$
is a complex vector space with a bilinear product, called the Lie bracket, defined by the commutator bracket $[X,Y]=XY-YX$ for $X,Y\in\mathfrak{sl}_{r+1}(\C)$, which is skew-symmetric and satisfies the Jacobi identity. 
Given $e_1,e_2,\ldots,e_{r+1}$, the standard basis vectors of $\R^{r+1}$, for each $i, 1 \le i \le r$, we define
$\alpha_i = e_i - e_{i+1}$.
In $\mathfrak{sl}_{r+1}(\C)$,
 the simple roots are $\Delta=\{\alpha_1,\alpha_2,\ldots,\alpha_r\}$, 
the positive roots are 
$\Phi^+=\Delta \cup \{\alpha_i+\alpha_{i+1}+\cdots+\alpha_j:1\leq i<j\leq r\}$,
and the negative roots are $-\Phi^+$. 
The \defn{highest root} is denoted $\hroot=\alpha_1+\alpha_2+\cdots+\alpha_r$ and the \defn{negative highest root} is denoted by $-\hroot=-(\alpha_1+\alpha_2+\cdots+\alpha_r)$.
We recall that the Weyl group $W$ of type $A_r$ is isomorphic to the symmetric group $\mathfrak{S}_{r+1}$.
Our main results characterize and enumerate the elements of the Weyl alternation sets $\calA_r(\hroot,\mu)$ when $\mu$ is a negative root of $\mathfrak{sl}_{r+1}(\mathbb{C})$. 
We introduce a subscript $r$ on the Weyl alternation set to specify the rank of the Lie algebra.

The following computations form the backbone of our proofs in this section. 

\begin{lemma}\label{lem:SmallerComputation}
For $\mathfrak{g}=\mathfrak{sl}_{r+1}(\C)$, the type $A_r$ simple Lie algebra,
\begin{enumerate}
\item $s_1(\tilde{\alpha}) = \tilde{\alpha} - \alpha_1$ and $s_r(\tilde{\alpha}) = \tilde{\alpha} - \alpha_r$,
\item for $2 \leq j \leq r-1$, $s_j(\tilde{\alpha}) = \tilde{\alpha}$, and
\item for all $1 \leq j \leq r$, $s_j(\rho) = \rho - \alpha_j$.
\end{enumerate}    
\end{lemma}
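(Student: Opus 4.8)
The plan is to work entirely in the standard coordinate realization of the type $A_r$ root system inside $\R^{r+1}$, where $\alpha_i = e_i - e_{i+1}$ and $\hroot = \alpha_1 + \cdots + \alpha_r = e_1 - e_{r+1}$, and to feed explicit inner products into the reflection formula $s_j(v) = v - 2\frac{(\alpha_j, v)}{(\alpha_j,\alpha_j)}\alpha_j$. Since $(\alpha_j, \alpha_j) = (e_j - e_{j+1}, e_j - e_{j+1}) = 2$ for every $j$, this formula simplifies to $s_j(v) = v - (\alpha_j, v)\alpha_j$, so each of the three items reduces to computing a single scalar: the number $(\alpha_j, \hroot)$ for items (1) and (2), and the number $(\alpha_j, \rho)$ for item (3).

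For items (1) and (2), I would expand $(\alpha_j, \hroot) = (e_j - e_{j+1},\, e_1 - e_{r+1})$ using orthonormality of the $e_k$, obtaining the Iverson-bracket expression $[j=1] - [j=r+1] - [j+1=1] + [j+1=r+1]$. When $j = 1$ only the first bracket survives (for $r\geq 2$), giving $(\alpha_1, \hroot) = 1$ and hence $s_1(\hroot) = \hroot - \alpha_1$; when $j = r$ only the last bracket survives, giving $(\alpha_r, \hroot) = 1$ and hence $s_r(\hroot) = \hroot - \alpha_r$; and when $2 \le j \le r-1$ all four brackets vanish (note $j+1 \le r < r+1$), giving $(\alpha_j, \hroot) = 0$ and hence $s_j(\hroot) = \hroot$, which is item (2).

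For item (3), I would compute $(\alpha_j, \rho)$ directly from the explicit coordinates of $\rho$. Writing $\rho = \frac{1}{2}\sum_{1 \le i < k \le r+1}(e_i - e_k)$ and collecting the coefficient of each $e_m$ (it appears with sign $+$ for the $r+1-m$ choices of $k > m$ and sign $-$ for the $m-1$ choices of $i < m$), one finds $\rho = \sum_{m=1}^{r+1}\left(\frac{r+2}{2} - m\right)e_m$. Therefore $(\alpha_j, \rho) = \left(\frac{r+2}{2} - j\right) - \left(\frac{r+2}{2} - (j+1)\right) = 1$ for every $1 \le j \le r$, and substituting into the reflection formula yields $s_j(\rho) = \rho - \alpha_j$. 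Alternatively, I could bypass the coordinate computation by invoking the standard fact that $\rho$ is the sum of the fundamental weights, so that $2(\rho, \alpha_j)/(\alpha_j, \alpha_j) = 1$ for every simple root; this is the conceptually cleanest route, but the coordinate computation keeps the argument self-contained.

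These calculations are genuinely routine, so there is no serious obstacle; the only thing to watch is the degenerate rank-one case $r = 1$, where $\hroot = \alpha_1$ forces $(\alpha_1, \hroot) = 2$ and hence $s_1(\hroot) = -\hroot \neq \hroot - \alpha_1$. Items (1) and (2) are therefore implicitly statements about $r \ge 2$ (with item (2) vacuous unless $r \ge 3$), and I would flag this restriction explicitly, whereas the computation shows item (3) holds for all $r \ge 1$.
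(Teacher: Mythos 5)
Your proposal is correct, and it is more self-contained than what the paper actually does: the paper's ``proof'' of this lemma is a one-line citation, deferring items (1) and (2) to Lemma 3.1 and item (3) to Lemma 3.2 of the reference \cite{Lauren}, with no computation shown. Your argument supplies exactly the computation being outsourced, in the standard realization $\alpha_i = e_i - e_{i+1}$, $\hroot = e_1 - e_{r+1}$: since $(\alpha_j,\alpha_j)=2$, everything reduces to the scalars $(\alpha_j,\hroot)$ and $(\alpha_j,\rho)$, and your evaluations of both (including the coordinate formula $\rho = \sum_{m=1}^{r+1}\bigl(\tfrac{r+2}{2}-m\bigr)e_m$, or equivalently the cleaner observation that $\rho$ is the sum of the fundamental weights so $2(\rho,\alpha_j)/(\alpha_j,\alpha_j)=1$) are accurate. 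What your route buys is a verifiable, reference-free proof; what the paper's route buys is brevity, at the cost of leaving the reader to chase the computation elsewhere. Your flag on the degenerate case $r=1$ is also a genuine catch: there $\hroot=\alpha_1$, so $s_1(\hroot)=-\hroot\neq\hroot-\alpha_1$, and item (1) is false as literally stated; the lemma implicitly assumes $r\geq 2$ (and item (2) is vacuous unless $r\geq 3$). Neither the paper's statement nor its citation-style proof records this restriction, though it is harmless in context since the paper's applications concern higher rank.
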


By applying this iteratively, we can identify the action of any element of the Weyl group on the highest root.
\begin{proof}
The computations are straightforward with (1) and (2) appearing in \cite[Lemma 3.1]{Lauren} and (3)  in \cite[Lemma 3.2]{Lauren}.
 \end{proof}

Now, we iteratively apply \Cref{lem:SmallerComputation} to the elements of $W$, the Weyl group of the Lie algebra of type $A_r$.

\begin{lemma}\label{lem:LargerComputations}
The action of Weyl group elements on $\hroot$ and $\rho$ satisfy the following.
\begin{enumerate}
    \item If $i_1, i_2,\ldots,i_k \in \{2,3,\ldots,r-1\}$, then $s_{i_k}s_{i_{k-1}}\cdots s_{i_1}(\tilde{\alpha}) = \tilde{\alpha}$.
    Otherwise, if $i_j$ is the first occurrence of $1$ or $r$ in the list $i_1,i_2,\ldots,i_k$, then $s_{i_k}s_{i_{k-1}} \cdots s_{i_1}(\tilde{\alpha}) = s_{i_k}s_{i_{k-1}} \cdots s_{i_j}(\tilde{\alpha})$.
    \item For $j < r$, $s_j s_{j-1} \cdots s_1(\tilde{\alpha}) = \tilde{\alpha} - \sum_{i=1}^j \alpha_i$, and for $j > 1$, $s_js_{j+1}\cdots s_r(\tilde{\alpha}) = \tilde{\alpha} - \sum_{i=j}^{r} \alpha_i$.
\end{enumerate}
\begin{enumerate}
    \item[(3)] For $1\leq i\leq r$ and $i+k\leq r$, we have $s_{i}s_{i+1} \cdots s_{i+k}(\rho) = \rho - \sum_{j=i}^{i+k} (i+k+1-j)  \alpha_j $ and similarly $s_{i+k} \cdots s_{i+1} s_{i}(\rho) = \rho - \sum_{j = i}^{i+k} (j-i+1)\alpha_j $.
    \item[(4)] For $1\leq k\leq r-1$, 
    $s_ks_{k+1}(\rho) = \rho - 2 \alpha_k - \alpha_{k+1}$ and $s_{k+1}s_k(\rho) = \rho -  \alpha_k - 2 \alpha_{k+1}$.
    \item[(5)] For $1\leq k\leq r-1$, 
    $s_ks_{k+1}s_k(\rho) = \rho - 2 \alpha_k - 2\alpha_{k+1}$.
    \item[(6)] For $1\leq k\leq r-2$, 
    $s_{k+2}s_ks_{k+1}(\rho) = \rho - 2 \alpha_k - \alpha_{k+1} - 2\alpha_{k+2}$ and $s_{k+1}s_{k+2}s_k(\rho) = \rho - \alpha_k - 3 \alpha_{k+1} - \alpha_{k+2}$.
\end{enumerate}
\end{lemma}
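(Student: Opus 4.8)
The plan is to prove all six identities by direct computation, iterating the three base-case formulas established in \Cref{lem:SmallerComputation}: namely $s_1(\hroot)=\hroot-\alpha_1$, $s_r(\hroot)=\hroot-\alpha_r$, $s_j(\hroot)=\hroot$ for interior $j$, and $s_j(\rho)=\rho-\alpha_j$ for all $j$. The one nontrivial input I would keep ready is the reflection formula $s_i(v)=v-2\frac{(\alpha_i,v)}{(\alpha_i,\alpha_i)}\alpha_i$ together with the type-$A$ Cartan data: $(\alpha_i,\alpha_i)=2$, $(\alpha_i,\alpha_{i\pm1})=-1$, and $(\alpha_i,\alpha_j)=0$ when $|i-j|\geq 2$. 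This lets me compute $s_i(\alpha_j)$ exactly: $s_i(\alpha_i)=-\alpha_i$, $s_i(\alpha_{i\pm1})=\alpha_{i\pm1}+\alpha_i$, and $s_i(\alpha_j)=\alpha_j$ otherwise.

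For item (1), I would argue that interior reflections fix $\hroot$, so applying $s_{i_k}\cdots s_{i_1}$ with all indices interior leaves $\hroot$ unchanged; and if $i_j$ is the first index equal to $1$ or $r$, then $s_{i_{j-1}}\cdots s_{i_1}$ fixes $\hroot$ (being a product of interior reflections), reducing the action to $s_{i_k}\cdots s_{i_j}(\hroot)$. For item (2), I would induct on $j$: writing $s_j s_{j-1}\cdots s_1(\hroot)=s_j\left(\hroot-\sum_{i=1}^{j-1}\alpha_i\right)$ and using that $s_j$ fixes $\hroot$ (for $j<r$) while $s_j\left(\sum_{i=1}^{j-1}\alpha_i\right)=\sum_{i=1}^{j-1}\alpha_i-\alpha_j$ (since only the $\alpha_{j-1}$ term interacts with $s_j$, contributing $+\alpha_j$, while $s_j(\hroot)$ loses nothing), yielding $\hroot-\sum_{i=1}^j\alpha_i$; the mirror identity follows by the diagram symmetry $i\mapsto r+1-i$.

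Items (3)--(6) are the analogous computations for $\rho$. For item (3), I would induct on $k$: from $s_i\cdots s_{i+k}(\rho)=s_i\left(\rho-\sum_{j=i+1}^{i+k}(i+k+1-j)\alpha_j\right)$, apply $s_i(\rho)=\rho-\alpha_i$ and track how $s_i$ acts on each $\alpha_j$ in the sum—only $\alpha_{i+1}$ meets $\alpha_i$ nontrivially—to recover the claimed coefficients $(i+k+1-j)$; the reversed product follows from the same symmetry. Items (4), (5), and (6) are then either special/small cases of the item (3) computation or short direct evaluations: (4) is $k=1$ in (3) read both ways, (5) chains $s_k s_{k+1}s_k(\rho)$ using (4), and (6) inserts one extra non-adjacent or adjacent reflection and recomputes the two affected coefficients.

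The computations are all routine once the action of each $s_i$ on simple roots is recorded, so I do not expect a genuine conceptual obstacle; the main source of friction will be bookkeeping the coefficient shifts correctly, especially in items (3) and (6), where an adjacent reflection $s_i$ adds $\alpha_i$ each time it acts on an adjacent simple root already present in the partial sum. I would organize the proof by first stating the explicit action of $s_i$ on the $\alpha_j$, then handling (1)--(2) for $\hroot$ and (3)--(6) for $\rho$ in turn, invoking the diagram automorphism $i\mapsto r+1-i$ to halve the work wherever a left/right pair of identities appears.
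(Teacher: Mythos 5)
Your proposal is correct and takes essentially the same approach as the paper: both iterate the base formulas of \Cref{lem:SmallerComputation}, prove the statements for $\hroot$ in items (1)--(2) by noting interior reflections fix $\hroot$ and inducting on the word length, obtain item (3) by the analogous induction for $\rho$ with item (4) as its $k=1$ special case, and verify items (5)--(6) by short direct computations. One transcription slip worth fixing: in item (2) your displayed intermediate identity should read $s_j\bigl(\sum_{i=1}^{j-1}\alpha_i\bigr)=\sum_{i=1}^{j-1}\alpha_i+\alpha_j$ rather than $-\alpha_j$ (your parenthetical ``contributing $+\alpha_j$'' and your final conclusion $\hroot-\sum_{i=1}^{j}\alpha_i$ are both consistent with the correct $+$ sign), so the argument itself is sound.
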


\begin{proof}
Item (1) follows directly from \Cref{lem:SmallerComputation}. 

We show the first statement of (2) by induction on $j$. 
The case when $j = 1$ follows immediately from \Cref{lem:SmallerComputation}. 
Suppose the claim holds for $j-1$. Applying our inductive hypothesis, we find that  
\[s_j s_{j-1} \cdots s_1\left(\tilde{\alpha}\right) = s_j\left(\tilde{\alpha} - \sum_{i=1}^{j-1} \alpha_i\right) =s_j\left(\hroot - \alpha_{j-1}-\sum_{i=1}^{j-2}\alpha_i\right) = \tilde{\alpha} - \left(\alpha_{j-1} + \alpha_j\right) - \sum_{i=1}^{j-2} \alpha_i.
\]

The second statement follows analogously. 

Item (3) follows from an inductive argument similar to that used in item (2), and item (4) is a special case of (3). 
Items (5) and (6) are straightforward calculations using \Cref{lem:SmallerComputation}. 
\end{proof}

\subsection{Allowable subwords}\label{sec:allowable_subwords}

In this section, we introduce a set of elements of the type $A_r$ Weyl group and verify that each is in $\calA(\tilde\alpha,-\tilde\alpha)$. We show in \Cref{sec:BAS_for_negAlpha} that these elements are precisely $\BAS(\tilde\alpha,-\tilde\alpha)$.

\begin{proposition}\label{prop:mu=-hroot}
If $\sigma$ is one of the Weyl group elements
\begin{enumerate}
\item[(a)] $s_k$ with $1 \le k \le r$,
\item[(b)] $s_{k+1}s_{k}$ with $2 \le k \le r-2$,
\item[(c)] $s_{k}s_{k+1}$ with $2 \le k \le r-2$, 
\item[(d)] $s_{k}s_{k+1}s_{k}$ with $2 \le k \le r-2$, or
\item[(e)] $s_{k+2}s_{k}s_{k+1}$ with $2 \le k \le r-3$, 
\end{enumerate}
then $\sigma \in \calA(\tilde{\alpha},-\tilde{\alpha})$. 
\end{proposition}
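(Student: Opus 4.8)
The plan is to verify membership in $\calA(\hroot,-\hroot)$ directly from the definition: for each listed $\sigma$, I must show that $\wp(\sigma(\hroot+\rho)-\rho-(-\hroot))>0$, i.e., that $\sigma(\hroot+\rho)-\rho+\hroot$ is expressible as a nonnegative integer combination of positive roots. Since $\sigma(\hroot+\rho) = \sigma(\hroot)+\sigma(\rho)$, I would compute each summand separately using the two parts of \Cref{lem:LargerComputations}: the action on $\hroot$ from item (1)--(2) and the action on $\rho$ from items (3)--(6). The target vector is then $\sigma(\hroot)-\hroot$ plus $\sigma(\rho)-\rho$ plus $2\hroot$, and I need the resulting coefficients of the simple roots to admit a nonnegative-root decomposition.

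The key observation that makes the cases uniform is that all the listed $\sigma$ have influence $I(\sigma)\subseteq\{2,3,\ldots,r-1\}$ avoiding both $1$ and $r$. By \Cref{lem:LargerComputations}(1), this forces $\sigma(\hroot)=\hroot$, so the $\hroot$-contribution cancels and the vector to analyze is simply $\sigma(\rho)-\rho+2\hroot$. I would then handle the five families in turn, substituting the explicit formula for $\sigma(\rho)-\rho$: for (a) use \Cref{lem:SmallerComputation}(3), giving $-\alpha_k$; for (b) and (c) use item (4); for (d) use item (5); and for (e) use item (6). In every case $\sigma(\rho)-\rho = -\sum_j c_j\alpha_j$ with small nonnegative integers $c_j$ (each $c_j\le 3$ and supported on an interval inside $\{2,\ldots,r-1\}$), so the target becomes $2\hroot-\sum_j c_j\alpha_j = \sum_{i=1}^r(2-c_j)\alpha_j$ where $c_j=0$ outside the support.

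The crux is then checking that each coefficient $2-c_j$ stays nonnegative and that the resulting vector is genuinely a sum of positive roots rather than merely a combination of simple roots with the right coefficients (these coincide here since all coefficients are nonnegative, so writing $\sum(2-c_j)\alpha_j$ as a multiset of simple roots suffices, each $\alpha_j$ being itself a positive root). The one place demanding attention is family (e), where item (6) produces the coefficient $3$ on the middle root $\alpha_{k+1}$: there the coefficient $2-3=-1$ would be negative. I expect this to be the main obstacle, and I would resolve it by \emph{not} cancelling $\hroot$ naively into simple roots but instead grouping the expression as a sum of longer positive roots — for instance, writing the needed vector as $\hroot + (\hroot - \alpha_{k+1}) - (\text{remaining simple roots})$ or, more carefully, exhibiting an explicit partition that uses the positive root $\alpha_{1,k}$ or $\alpha_{k+1,r}$ to absorb the deficit. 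Concretely, for each family I would simply display one valid nonnegative-root decomposition of $\sigma(\hroot+\rho)-\rho+\hroot$, which certifies $\wp>0$ and hence $\sigma\in\calA(\hroot,-\hroot)$; the bounds on $k$ in each case (e.g. $2\le k\le r-2$) are exactly what guarantee the support lands inside $\{1,\ldots,r\}$ and the coefficients remain admissible.
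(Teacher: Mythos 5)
Your overall strategy coincides with the paper's: expand $\sigma(\hroot+\rho)+\hroot-\rho$ in the simple-root basis, using \Cref{lem:LargerComputations} for $\sigma(\rho)$ and the fact that words avoiding $s_1$ and $s_r$ fix $\hroot$, and then check that every coefficient is nonnegative. For families (a)--(d) this goes through exactly as you describe. However, your treatment of family (e) contains a genuine error: you have applied the wrong half of \Cref{lem:LargerComputations}(6). The word in family (e) is $s_{k+2}s_ks_{k+1}$, and for \emph{that} word item (6) gives $s_{k+2}s_ks_{k+1}(\rho)=\rho-2\alpha_k-\alpha_{k+1}-2\alpha_{k+2}$: the coefficient on the middle root is $1$, not $3$, so the target vector is $2\hroot-2\alpha_k-\alpha_{k+1}-2\alpha_{k+2}$, all of whose coefficients lie in $\{0,1,2\}$, and there is no deficit to repair. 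The formula with the coefficient $3$ pertains to the different word $s_{k+1}s_{k+2}s_k=s_{k+1}s_ks_{k+2}$, which is not on the list---indeed it is one of the forbidden words of \Cref{lem:forbidden words} and lies outside $\calA(\hroot,-\hroot)$.

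More seriously, the patch you propose for this (nonexistent) deficit cannot work even in principle. The simple roots form a basis, and every positive root is a nonnegative integer combination of simple roots; hence any nonnegative integer combination of positive roots has nonnegative coefficients in the simple-root basis. If the coefficient of $\alpha_{k+1}$ in $\sigma(\hroot+\rho)+\hroot-\rho$ really were $-1$, then $\wp$ of that vector would equal $0$: no regrouping into ``longer positive roots'' such as $\alpha_{1,k}$ or $\alpha_{k+1,r}$ can absorb a negative coefficient, because the basis expansion is unique and a vector partition never subtracts roots. In that hypothetical the correct conclusion would be $\sigma\notin\calA(\hroot,-\hroot)$, not that a cleverer decomposition exists. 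One further small gap: your uniform claim that every listed $\sigma$ has $I(\sigma)\subseteq\{2,\ldots,r-1\}$, hence fixes $\hroot$, fails in family (a) for $k=1$ and $k=r$; as in the paper, those two elements require the separate (easy) computation $s_1(\hroot+\rho)+\hroot-\rho=2\hroot-2\alpha_1$ (and symmetrically for $s_r$) via \Cref{lem:SmallerComputation}.
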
 

\begin{proof}
It suffices to check that for any $w \in W$ from the list, if $w(\tilde{\alpha} + \rho) = \tilde{\alpha} + \rho - \sum_{i=1}^r c_i \alpha_i$, then $c_i \leq 2$ for all $i$. 
First, if $w$ does not have a subword $s_1$ or $s_r$, then $w(\tilde{\alpha}) = \tilde{\alpha}$ by \Cref{lem:LargerComputations}~(1). 
Consequently, when $w$ does not have a subword $s_1$ or $s_r$, we need only focus on $w(\rho)$, and the desired implication follows by \Cref{lem:LargerComputations} parts (3)-(6). The only cases where we consider $s_1$ or $s_r$ is when $w$ is the simple reflection $s_1$ or $s_r$.
In these cases, the proof uses the simpler computations in \Cref{lem:SmallerComputation}.
Thus, in all cases, $w(\tilde{\alpha}+\rho)+\tilde{\alpha}-\rho=2\tilde{\alpha}-\sum_{i=1}^r c_i \alpha_i$ with all $c_i\leq 2$. As noted above, the result follows.
\end{proof}

\subsection{Forbidden subwords}

In this section, we collect some subwords that cannot appear as consecutive subwords in a reduced expression for an element of $\calA(\tilde\alpha,-\tilde\alpha)$.

\begin{lemma}\label{lem:forbidden words}
The words 
\[s_2s_1,\qquad s_1s_2,\qquad s_{r-1}s_r, \qquad \text{and} \qquad s_rs_{r-1},\] and for any $2 \le i \le r-1$, the words
\[s_{i-1}s_is_{i+1}, \qquad s_{i}s_{i-1}s_{i+1}, \qquad \text{and} \qquad s_{i+1}s_is_{i-1}\]
are not contained in $\calA(\hroot,-\hroot)$.
\end{lemma}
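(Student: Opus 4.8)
The goal is to show that each of the listed words fails to lie in $\calA(\hroot,-\hroot)$, which by definition means that for the relevant $w$, the expression $w(\hroot+\rho)-\rho+\hroot = 2\hroot - \sum_i c_i\alpha_i$ has some coefficient $c_i$ that is negative (so $\wp$ vanishes on $w(\hroot+\rho)-\rho-(-\hroot)$). The uniform strategy is therefore a direct computation: apply the action formulas from \Cref{lem:SmallerComputation} and \Cref{lem:LargerComputations} to each word, read off the coefficients $c_i$, and exhibit one coefficient exceeding $2$, equivalently producing a negative coefficient in $2\hroot-\sum c_i\alpha_i$. Because each word is short (length at most three), these are finite, mechanical calculations, and the main content is bookkeeping rather than any clever idea.

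\textbf{Organizing the cases.} First I would split the list according to whether the word contains an $s_1$ or an $s_r$ (an ``endpoint'' reflection), since by \Cref{lem:LargerComputations}~(1) all other reflections fix $\hroot$. For the four boundary words $s_2s_1,\ s_1s_2,\ s_{r-1}s_r,\ s_rs_{r-1}$, I would compute both $w(\hroot)$ and $w(\rho)$ using \Cref{lem:SmallerComputation} and \Cref{lem:LargerComputations}~(2),(4): for instance $s_2s_1(\hroot)=\hroot-\alpha_1-\alpha_2$ and $s_2s_1(\rho)=\rho-\alpha_1-2\alpha_2$ give a combined coefficient of $3$ on $\alpha_2$, which exceeds $2$ and thus forces $w\notin\calA(\hroot,-\hroot)$. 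The symmetric words $s_1s_2$, $s_{r-1}s_r$, $s_rs_{r-1}$ follow by the same computation together with the diagram symmetry $i\mapsto r+1-i$. For the three interior families $s_{i-1}s_is_{i+1},\ s_is_{i-1}s_{i+1},\ s_{i+1}s_is_{i-1}$ with $2\le i\le r-1$, none of $s_{i-1},s_i,s_{i+1}$ is an endpoint reflection when $i$ is strictly interior, so $w(\hroot)=\hroot$ and I only need $w(\rho)$; here \Cref{lem:LargerComputations}~(5),(6) (and its obvious length-three analogues) deliver a coefficient equal to $3$ on the middle index $\alpha_i$, again violating the $c_i\le 2$ bound.

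\textbf{Boundary subtlety.} The one point requiring care is the interior families at the extreme values $i=2$ and $i=r-1$, where one of the three reflections actually is $s_1$ or $s_r$ and therefore moves $\hroot$; there I cannot use $w(\hroot)=\hroot$ and must instead combine the nontrivial action on $\hroot$ from \Cref{lem:LargerComputations}~(2) with the action on $\rho$. This is the main (mild) obstacle: I expect that in these endpoint instances the contributions to the offending coefficient still add up to at least $3$, but the verification is genuinely a separate computation rather than a special case of the interior formula. Once these boundary instances are checked, every word on the list produces a coefficient $c_i>2$ for some $i$, so none lies in $\calA(\hroot,-\hroot)$, completing the proof.
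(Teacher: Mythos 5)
Your strategy coincides with the paper's: evaluate $w(\hroot+\rho)+\hroot-\rho$ via \Cref{lem:SmallerComputation} and \Cref{lem:LargerComputations} and exhibit a negative coefficient, forcing $\wp$ to vanish. But there is a concrete error in your interior-case bookkeeping. You assert that for all three families $s_{i-1}s_is_{i+1}$, $s_is_{i-1}s_{i+1}$, $s_{i+1}s_is_{i-1}$ the lemmas ``deliver a coefficient equal to $3$ on the middle index $\alpha_i$.'' That is true only for $s_is_{i-1}s_{i+1}$, where \Cref{lem:LargerComputations}(6) (using the commutation $s_is_{i-1}s_{i+1}=s_is_{i+1}s_{i-1}$) gives $\rho-\alpha_{i-1}-3\alpha_i-\alpha_{i+1}$. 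For the two monotone words, \Cref{lem:LargerComputations}(3) gives
\[
s_{i-1}s_is_{i+1}(\rho)=\rho-3\alpha_{i-1}-2\alpha_i-\alpha_{i+1}
\qquad\text{and}\qquad
s_{i+1}s_is_{i-1}(\rho)=\rho-\alpha_{i-1}-2\alpha_i-3\alpha_{i+1},
\]
so the middle coefficient is $2$, which does \emph{not} violate the $c_i\le 2$ bound; checking only $\alpha_i$ stalls the argument for these two families. The correct uniform statement---and the one the paper's proof isolates---is that the coefficient $3$ (hence the negative coefficient in $2\hroot-\sum_i c_i\alpha_i$) occurs at the index of the \emph{leftmost} reflection in the word, i.e., the reflection applied last. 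With that correction your computation closes.

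Second, you flag the boundary instances ($i=2$, $i=r-1$, and implicitly small $r$, where a single word may contain both $s_1$ and $s_r$) but leave them at ``I expect the contributions still add up to at least $3$.'' Since these instances are part of the statement, a complete proof must actually carry them out, and they do work for a structural reason you could state instead of hoping: by \Cref{lem:SmallerComputation}(1), the endpoint reflections only \emph{subtract} further simple roots from $\hroot$, so the nontrivial action on $\hroot$ can only make the offending coefficient more negative, never less. For example, for $r\ge 4$ one computes $s_1s_2s_3(\hroot+\rho)+\hroot-\rho=2\hroot-4\alpha_1-2\alpha_2-\alpha_3$, with coefficient $-2$ on $\alpha_1$. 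Adding this one-line observation (or the short explicit computations) turns your promissory note into a proof and brings your case analysis in line with the paper's terser argument, which simply asserts the uniform computation for all listed words.
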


\begin{proof}

To show that the listed words are not in $\calA(\hroot,-\hroot)$, it suffices to check that $w(\tilde{\alpha}+\rho)+\tilde{\alpha}-\rho$ has a negative coefficient when expanded as a linear combination of simple roots, meaning that $\wp(w(\tilde{\alpha}+\rho)+\tilde{\alpha}-\rho)$ is necessarily zero. We denote the leftmost reflection in each word $w$ by $s_j$. Using \Cref{lem:SmallerComputation} and \Cref{lem:LargerComputations}, it is straightforward to compute that for any word $w$ listed, 
\begin{align}
w(\tilde{\alpha}+\rho)+\tilde{\alpha}-\rho=2\tilde{\alpha}-3\alpha_j-\sum_{i=1}^{j-1} c_i \alpha_i-\sum_{i=j+1}^{r} c_i\label{eq:comp} \alpha_i\end{align}
with $c_i\leq 2$ for all $i$. 
Observe that in \Cref{eq:comp} the coefficient of $\alpha_j$ is $-1$.
As noted above, the result follows. 
\qedhere
\end{proof}

\begin{lemma} \label{Highest:1234}
Let $1 \leq k \leq r-3$. The product of the four simple reflections $s_k$, $s_{k+1}$, $s_{k+2}$, and $s_{k+3}$ in any order is not contained in $\mathcal{A}(\hroot,-\hroot)$.    
\end{lemma}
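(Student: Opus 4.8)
The plan is to show that any product of the four consecutive simple reflections $s_k, s_{k+1}, s_{k+2}, s_{k+3}$ produces a weight $w(\hroot+\rho)+\hroot-\rho$ with a coefficient exceeding $2$ (or negative) when expanded in the simple root basis, which forces $\wp(w(\hroot+\rho)+\hroot-\rho)=0$ and hence $w\notin\calA(\hroot,-\hroot)$. The key observation, consistent with \Cref{lem:forbidden words}, is that since $1\le k$ and $k+3\le r$, the indices $k,k+1,k+2,k+3$ all avoid being \emph{simultaneously} at both ends; more importantly, any reduced word on these four consecutive generators either contains one of the already-forbidden length-three subwords from \Cref{lem:forbidden words} or builds up a coefficient that is too large.

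First I would reduce the number of cases using \Cref{cor:subword_property}: if a reduced expression of $w$ contains any of the forbidden subwords $s_{i-1}s_is_{i+1}$, $s_is_{i-1}s_{i+1}$, or $s_{i+1}s_is_{i-1}$ (for the relevant interior index $i\in\{k+1,k+2\}$), then $w\notin\calA(\hroot,-\hroot)$ immediately, because a word containing a consecutive subword outside the alternation set cannot itself be in the alternation set. So the real work is to enumerate, up to the commutation relations among $\{s_k,\dots,s_{k+3}\}$, those orderings of the four reflections that avoid all previously forbidden subwords, and then directly compute the action on $\hroot+\rho$ for each surviving ordering. Because $s_k$ and $s_{k+2}$ commute, as do $s_k,s_{k+3}$ and $s_{k+1},s_{k+3}$, the number of genuinely distinct products is small, and many orderings are ruled out instantly by the subword criterion.

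For the surviving orderings I would apply \Cref{lem:SmallerComputation} and \Cref{lem:LargerComputations} to compute $w(\hroot)$ and $w(\rho)$ iteratively, exactly as in the proof of \Cref{lem:forbidden words}. Since $k\ge 1$ and $k+3\le r$, the block $\{k,\dots,k+3\}$ sits in the interior relative to the endpoints unless $k=1$ or $k+3=r$; when an endpoint is involved, $w(\hroot)=\hroot-\sum\alpha_i$ contributes, otherwise $w(\hroot)=\hroot$ by \Cref{lem:LargerComputations}~(1) and only $w(\rho)$ matters. In each case the coefficient vector $(c_k,c_{k+1},c_{k+2},c_{k+3})$ in $w(\hroot+\rho)+\hroot-\rho=2\hroot-\sum c_i\alpha_i$ can be read off, and I expect to find that every admissible ordering yields some $c_i\ge 3$ (analogous to the coefficient $3$ appearing in \Cref{eq:comp}), or a negative coefficient, so that $\wp$ vanishes.

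The main obstacle is the bookkeeping: unlike the length-three case, four generators admit enough distinct reduced words that a careless enumeration could miss an ordering, and one must be careful to use the commutation relations to identify words representing the same group element while still checking that \emph{some} reduced expression of each candidate contains a forbidden subword or yields a bad coefficient. I would organize this by first quotienting out commutations to list representative orderings, then partition them into those dispatched by \Cref{cor:subword_property} via \Cref{lem:forbidden words} and the genuinely new ones requiring a direct computation. The expectation is that only a handful of orderings survive the subword test, and each of those is handled by a single application of the lemmas on the action on $\rho$, mirroring the coefficient-$3$ phenomenon that drives \Cref{lem:forbidden words}.
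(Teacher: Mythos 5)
Your proposal is correct, and its core --- quotient by commutations, then eliminate words via \Cref{cor:subword_property} and the forbidden subwords of \Cref{lem:forbidden words} --- is the same reduction the paper uses; the only real difference is how the last cases are closed out. The paper organizes the enumeration with a single observation: $s_{k+3}$ commutes with $s_k$ and $s_{k+1}$, so every ordering equals $s_{k+3}\tau$ or $\tau s_{k+3}$ with $\tau$ an ordering of $s_k,s_{k+1},s_{k+2}$; by \Cref{lem:forbidden words} the only $\tau$ not already forbidden is $s_ks_{k+2}s_{k+1}$ (and only for $k\geq 2$), and then one further commutation exposes a forbidden length-three subword in both remaining products, since $s_{k+3}s_ks_{k+2}s_{k+1}=s_k(s_{k+3}s_{k+2}s_{k+1})$ and $s_ks_{k+2}s_{k+1}s_{k+3}=s_k(s_{k+2}s_{k+1}s_{k+3})$. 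Thus the paper needs no weight computation whatsoever: every ordering dies by the subword criterion alone, provided one searches over commutation-equivalent expressions --- exactly the caution you flag yourself. Your fallback of computing $w(\hroot+\rho)+\hroot-\rho$ for survivors is therefore never actually invoked if the subword search is thorough, but it is a valid safety net and would succeed: for instance, when $2\leq k$ and $k+3<r$ one gets $s_ks_{k+2}s_{k+1}s_{k+3}(\hroot+\rho)+\hroot-\rho=2\hroot-2\alpha_k-\alpha_{k+1}-3\alpha_{k+2}-\alpha_{k+3}$, whose coefficient on $\alpha_{k+2}$ is $-1$, so $\wp$ vanishes exactly via the coefficient-$3$ phenomenon you predict; the computational route likewise absorbs the boundary values $k=1$ and $k+3=r$ without special pleading. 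In short, your plan is a correct hybrid: it buys robustness, since you never need to spot the hidden commutation that reveals a forbidden subword, at the cost of explicit root computations that the paper's sharper combinatorial endgame (an enumeration of just eight group elements, all dispatched by subwords) avoids entirely.
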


\begin{proof}
Let $\sigma$ be a product of $s_k$, $s_{k+1}$, $s_{k+2}$, and $s_{k+3}$ in some order and suppose that $\sigma\in\mathcal{A}(\hroot,-\hroot)$. Because $s_{k+3}$ commutes with $s_k$ and $s_{k+1}$, we can move $s_{k+3}$ to the beginning or the end of the reduced expression for $\sigma$. That is, $\sigma=s_{k+3}\tau$ or $\sigma=\tau s_{k+3}$ where $\tau$ is a product of $s_k$, $s_{k+1}$, and $s_{k+2}$ in some order. By \Cref{lem:forbidden words}, we must have that $\tau=s_{k}s_{k+2}s_{k+1}$ for $2\leq k\leq r-3$. Then either \begin{align*}
    \sigma&=s_{k+3}\tau\\
    &=s_{k+3}s_{k}s_{k+2}s_{k+1}\\
    &=s_k(s_{k+3}s_{k+2}s_{k+1}),
\end{align*} containing a forbidden $s_{k+3}s_{k+2}s_{k+1}$ or \begin{align*}
    \sigma&=\tau s_{k+3}\\
    &=s_{k}s_{k+2}s_{k+1}s_{k+3}\\
    &=s_k(s_{k+2}s_{k+1}s_{k+3}),
\end{align*} containing a forbidden $s_{k+2}s_{k+1}s_{k+3}$. This contradicts the fact that $\sigma\in \mathcal{A}(\hroot,-\hroot)$.
\end{proof}

\subsection{Weyl alternation set \texorpdfstring{$\calA(\hroot,\mu)$}{A(alpha tilde,mu)} for \texorpdfstring{$\mu$}{mu} the negative highest root}\label{sec:BAS_for_negAlpha} In this section, we apply \Cref{prop:BAS_properties} to prove that the set of words given in \Cref{sec:allowable_subwords} is precisely the set $\BAS(\tilde\alpha,-\tilde\alpha)$. The following result is analogous to \cite[Lemma 4.3]{Lauren}, and we modify it to fit our purposes.

\begin{lemma}\label{lem:ProductOfTwoBASs}
    Let $S$ be the set of elements listed in \Cref{prop:mu=-hroot}.
    Let $\sigma, \tau\in S$ be nonindependent elements.
    Then, the product $\sigma\tau$ falls into one of the following three cases:
    \begin{enumerate}
        \item $\sigma\tau\in S$,
        \item $\sigma\tau=\nu_1\nu_2\cdots\nu_m$ where $\{\nu_1,\nu_2,\ldots,\nu_m\}$ is a (possibly empty) pairwise independent subset of $S$ and \\
        $\ell(\nu_1)+\cdots+\ell(\nu_m)<\ell(\sigma)+\ell(\tau)$, or
        \item $\sigma\tau$ contains a forbidden subword listed in \Cref{lem:forbidden words} or~\Cref{Highest:1234}.
    \end{enumerate}
\end{lemma}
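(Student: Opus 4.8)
The plan is to prove \Cref{lem:ProductOfTwoBASs} by a finite case analysis over the unordered pairs of nonindependent elements from the list $S$ given in \Cref{prop:mu=-hroot}. The list $S$ consists of the families (a)--(e), namely the simple reflections $s_k$, the length-two words $s_{k+1}s_k$ and $s_ks_{k+1}$, the length-three palindromes $s_ks_{k+1}s_k$, and the length-three words $s_{k+2}s_ks_{k+1}$. Two elements $\sigma,\tau\in S$ are nonindependent precisely when $I(\sigma)\cap\overline{I}(\tau)\neq\emptyset$, so the first thing I would do is observe that each element's influence is an interval (of length $1$, $2$, or $3$) in the type $A$ Dynkin diagram, and nonindependence forces these intervals to overlap or be adjacent. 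This confines the analysis to pairs whose index windows are within distance one of each other, a bounded and enumerable collection of configurations, each parametrized by an offset between the two windows.

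First I would reduce the bookkeeping by noting that since $\sigma,\tau\in\calA(\hroot,-\hroot)$, any product $\sigma\tau$ whose reduced form contains one of the forbidden subwords of \Cref{lem:forbidden words} or \Cref{Highest:1234} automatically lands in case (3); so the work is to show that the remaining products land in case (1) or case (2). The key computational tool is \Cref{lem:LargerComputations}, which lets me evaluate $\sigma\tau(\hroot+\rho)+\hroot-\rho$ explicitly and read off the coefficients $c_i$: a product stays in $\calA(\hroot,-\hroot)$ exactly when all $c_i\leq 2$ and no coefficient goes negative, and then I check whether $\sigma\tau$ either equals a member of $S$ (case (1)) or factors as a shorter pairwise independent product (case (2)). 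For the palindromic and near-adjacent products, I would use the braid relation $s_is_{i+1}s_i=s_{i+1}s_is_{i+1}$ and the commutation relations to rewrite products into canonical reduced words before comparing against $S$ and against the forbidden list; the total-length inequality $\ell(\nu_1)+\cdots+\ell(\nu_m)<\ell(\sigma)+\ell(\tau)$ in case (2) will typically hold because a nontrivial reduction (a braid or nil move) shortens the word, as in \Cref{lem:properties_of_influence} item (3).

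The main obstacle I anticipate is the sheer number of offset configurations, together with handling the interaction of the boundary constraints $2\le k\le r-2$ (and $2\le k\le r-3$ for family (e)) when the two windows sit near the ends of the Dynkin diagram, since products pushing an index toward $1$ or $r$ can introduce the forbidden $s_1s_2$, $s_2s_1$, $s_{r-1}s_r$, or $s_rs_{r-1}$ and thereby trigger case (3) rather than case (1) or (2). To keep this manageable I would organize the casework by the pair of families involved (for instance $(a)$ with $(a)$, $(a)$ with $(b)$, $\ldots$, $(e)$ with $(e)$) and, within each, by the relative offset of the index windows, treating the generic interior case uniformly and checking the finitely many boundary cases separately. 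Since this mirrors \cite[Lemma 4.3]{Lauren}, I expect each individual subcase to reduce to a short computation via \Cref{lem:SmallerComputation} and \Cref{lem:LargerComputations} followed by a comparison with the explicit list $S$ and the forbidden words, so the difficulty is organizational rather than conceptual.
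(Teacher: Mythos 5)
Your proposal is correct and follows essentially the same route as the paper: the paper's proof (given in \Cref{app:proofs}) is exactly a finite case analysis over the pairs of families (a)--(e), organized by the relative offset of the index windows forced by nonindependence, with each product rewritten via braid and commutation relations and then matched against $S$ and the forbidden lists of \Cref{lem:forbidden words} and \Cref{Highest:1234}. The only differences are organizational: the paper halves the casework using the length-preserving automorphism $\sigma\mapsto w_0\sigma w_0$ (which fixes families (a), (d), (e) and swaps (b) and (c)), and it dispenses entirely with your auxiliary coefficient computations via \Cref{lem:LargerComputations}, since once the forbidden-word lemmas are established the trichotomy is verified purely at the level of words (note also that braid moves preserve length, so in case (2) the strict length drop comes only from cancellations $s_ks_k=1$, not from braid moves as you suggest).
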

\begin{proof}[Proof idea]
  We proceed via cases. Recall the possible cases for $\sigma$ and $\tau$ as in \Cref{prop:mu=-hroot}:
  \begin{enumerate}
        \item[(a)] $s_k$ with $1 \le k \le r$,
        \item[(b)] $s_{k+1}s_{k}$ with $2 \le k \le r-2$,
        \item[(c)] $s_{k}s_{k+1}$ with $2 \le k \le r-2$, 
        \item[(d)] $s_{k}s_{k+1}s_{k}$ with $2 \le k \le r-2$, or
        \item[(e)] $s_{k+2}s_{k}s_{k+1}$ with $2 \le k \le r-3$.
    \end{enumerate}
    
       For each pair of forms that $\sigma, \tau\in S$ can respectively take, we fix the indices of $\sigma$, then consider the range of indices of $\tau$ for which $\sigma$ and $\tau$ are not independent.
  
  \smallskip

  \noindent \textbf{Products of the form (a)(a):}
    Let $\sigma = s_k$ for some $1 \leq k \leq r$. For $\tau = s_{j}$ with $1\leq j\leq r$ not to be independent with $\sigma$, we must have $j\in\{k-1,k,k+1\}$.
    \begin{itemize}
        \item Let $j=k-1$ for $2\leq k\leq r$. Then $\sigma\tau=s_ks_{k-1}$. If $k=2$ (resp., $k=r$), then $s_2s_1$ (resp., $s_rs_{r-1}$) is forbidden by \Cref{lem:forbidden words}. That is, $\sigma\tau$ falls into case (3). Otherwise, $3\leq k\leq r-1$ and $s_ks_{k-1}\in S$. That is, $\sigma\tau$ falls into case (1). 
        \item Let $j=k$ for $1\leq k\leq r$. Then $\sigma\tau=s_ks_k=1$, a product of an empty subset of $S$, with $0=\ell(1)=\ell(s_ks_{k})<\ell(s_k)+\ell(s_{k})=2$. That is, $\sigma\tau$ falls into case (2).
        \item Let $j=k+1$ for $1\leq k\leq r-1$. Then $\sigma\tau=s_ks_{k+1}$. If $k=1$ (resp., $k=r-1$), then $s_1s_2$ (resp., $s_{r-1}s_r$) is forbidden by \Cref{lem:forbidden words}. That is, $\sigma\tau$ falls into case (3). Otherwise, $2\leq k\leq r-2$ and $s_ks_{k+1}\in S$. That is, $\sigma\tau$ falls into case (1).
    \end{itemize}

\noindent The remaining cases are verified similarly.
For the complete proof, see  \Cref{app:proofs}. 
\end{proof}

\smallskip

Next, we characterize the set of basic allowable subwords in the case $\mu=-\hroot$.

\begin{theorem} \label{thm:BAS_for_negAlpha}
    The set $\BAS(\tilde\alpha,-\tilde\alpha)$ of basic allowable subwords of $\calA(\tilde\alpha,-\tilde\alpha)$ consists of \begin{enumerate}
        \item[(a)] $s_k$ with $1 \le k \le r$,
\item[(b)] $s_{k+1}s_{k}$ with $2 \le k \le r-2$,
\item[(c)] $s_{k}s_{k+1}$ with $2 \le k \le r-2$, 
\item[(d)] $s_{k}s_{k+1}s_{k}$ with $2 \le k \le r-2$, and
\item[(e)] $s_{k+2}s_{k}s_{k+1}$ with $2 \le k \le r-3$.
\end{enumerate}
\end{theorem}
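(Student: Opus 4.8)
The plan is to invoke \Cref{prop:BAS_properties}, which reduces the task of identifying $\BAS(\tilde\alpha,-\tilde\alpha)$ to verifying that the set $S$ of words listed in parts (a)--(e) satisfies conditions (C1), (C2), and (C3). The heavy lifting for (C3) has already been carried out in \Cref{lem:ProductOfTwoBASs}, so the structure of the proof will be to check (C1) and (C2) directly and then cite the two lemmas from the previous subsections to dispatch (C3). First I would confirm that $S \subseteq \calA(\tilde\alpha,-\tilde\alpha)$ and that $1 \notin S$: membership in the alternation set is exactly the content of \Cref{prop:mu=-hroot}, and none of the listed words is the identity since each has odd length or, in the length-two cases, is a genuine product of distinct noncommuting reflections.

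For condition (C1), I would argue that every simple transposition $s_i \in \calA(\tilde\alpha,-\tilde\alpha)$ already appears in $S$. This is immediate because part (a) of the list includes $s_k$ for \emph{all} $1 \le k \le r$, so no simple reflection that lies in the alternation set can be missed. For condition (C2), I would check case by case that each element of $S$ has connected influence: the singletons $s_k$ have influence $\{k\}$; the words in (b), (c), and (d) have influence $\{k,k+1\}$, which is connected; and the word $s_{k+2}s_ks_{k+1}$ in (e) has influence $\{k,k+1,k+2\}$, again connected. So (C2) holds by inspection.

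The condition (C3) is where the substance lies, and here I would simply invoke \Cref{lem:ProductOfTwoBASs}, which asserts precisely that for any two nonindependent $\sigma,\tau \in S$, the product $\sigma\tau$ either lies in $S$, decomposes as a shorter pairwise independent product of elements of $S$, or contains a forbidden subword from \Cref{lem:forbidden words} or \Cref{Highest:1234}. The first two outcomes are literally items (1) and (2) of Condition (C3). For the third outcome I must note that any word in $\calA(\tilde\alpha,-\tilde\alpha)$ cannot contain one of these forbidden subwords as a consecutive subword---this is exactly \Cref{lem:forbidden words} and \Cref{Highest:1234} combined with \Cref{cor:subword_property}---so a product containing such a subword realizes item (3) of Condition (C3). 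Thus all three cases of \Cref{lem:ProductOfTwoBASs} land inside the three permissible cases of (C3).

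Having verified (C1), (C2), and (C3), I would conclude by \Cref{prop:BAS_properties} that $\BAS(\tilde\alpha,-\tilde\alpha) = S$, which is exactly the asserted list. The main obstacle is entirely absorbed into \Cref{lem:ProductOfTwoBASs}: the genuinely delicate work is the exhaustive case analysis over the pairs of forms (a)--(e) for $\sigma$ and $\tau$, tracking the index ranges that make them nonindependent and showing each product falls into one of the three buckets. Since that lemma is already established (with its full case check deferred to the appendix), the proof of \Cref{thm:BAS_for_negAlpha} itself is a short verification of the hypotheses of \Cref{prop:BAS_properties}, and I would keep it brief rather than re-deriving the product computations.
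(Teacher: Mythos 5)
Your proposal is correct and follows essentially the same route as the paper: both verify conditions (C1), (C2), and (C3) of \Cref{prop:BAS_properties} for the listed set $S$, with (C1) immediate from part (a), (C2) read off from the reduced expressions, and (C3) dispatched entirely by \Cref{lem:ProductOfTwoBASs}. Your additional remarks---checking $S \subseteq \calA(\tilde\alpha,-\tilde\alpha)$ via \Cref{prop:mu=-hroot} and noting that the forbidden-subword outcome of \Cref{lem:ProductOfTwoBASs} realizes item (3) of (C3) because those subwords lie outside the alternation set---are correct elaborations of steps the paper leaves implicit.
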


\begin{proof}
    Let $S$ denote the set of elements given in the statement of \Cref{thm:BAS_for_negAlpha}. We proceed by showing that $S$ satisfies the conditions in \Cref{prop:BAS_properties}. 

    Since $S$ contains all simple transpositions $s_k$ for $1\leq k\leq r$, it clearly satisfies (C1). Next, considering \Cref{lem:properties_of_influence}~(1), we can read off the influence of each element as the indices present on simple transpositions in their reduced expressions given above. As these sets are connected, $S$ satisfies (C2). Finally, \Cref{lem:ProductOfTwoBASs} establishes that $S$ satisfies (C3). Thus, $S=\BAS(\tilde\alpha,-\tilde\alpha)$, as desired.
\end{proof}

\subsection{Weyl alternation sets \texorpdfstring{$\calA(\hroot,\mu)$}{A(alpha tilde, mu)} for other negative roots \texorpdfstring{$\mu$}{mu}}\label{sec:OtherMu}

In this subsection, we consider other negative roots besides the negative highest root and apply \Cref{prop:MovingToLargerWeights} to obtain the Weyl alternation sets $\calA(\hroot,\mu)$ from that of $\calA(\hroot,-\tilde{\alpha})$ found above. Each $\mu\in\Phi^-$ can be written as \[\mu=-(\alpha_i+\alpha_{i+1}+\dots+\alpha_j)\] for some $1\leq i\leq j\leq r$.

For the remainder of this paper, we let
\[\alpha_{i,j} = \sum_{k=i}^j \alpha_k\] for any $1 \le i \leq j \le r$ and as expected $\alpha_{i,i}=\alpha_i$.
Because the actions of $s_1$ and $s_r$ on weights are distinct from that of $s_k$, with $2 \leq k \leq r-1$, we group the negative roots into three cases: 
(1) $1=i \leq j < r$; 
(2) $1<i\leq j=r$; and 
(3) $1<i\leq j<r$. 
In each case, the corresponding corollary below of \Cref{prop:MovingToLargerWeights} is obtained by computing the intersection of $\BAS(\hroot,-\hroot)$, as characterized in \Cref{thm:BAS_for_negAlpha}, with the Weyl alternation set for the relevant negative root.

\begin{example}
\label{ex:bas_survival}
    For $r=5$, one can readily verify that $s_4s_3$ is a basic allowable subword in $\BAS(\hroot,-\hroot)$. For $\mu=-\alpha_{1,3}$, note that \begin{align*}
        s_4s_3(\hroot+\rho)-\mu-\rho&=s_4(\hroot+\rho-\alpha_3)-\mu-\rho\\
        &=\hroot+\rho-\alpha_4-\alpha_3-\alpha_4-\mu-\rho\\
        &=2\alpha_1+2\alpha_2+\alpha_3-\alpha_4+\alpha_5
\end{align*} 
 has a negative coefficient on $\alpha_4$, hence $s_4s_3\notin\BAS(\hroot,\mu)$.
\end{example}

The following three corollaries are obtained by computing the intersections $\BAS(\hroot,-\hroot)\cap\calA(\hroot,\mu)$ for negative roots $\mu$. As illustrated in \Cref{ex:bas_survival}, only a subset of the basic allowable subwords survive.

\begin{corollary}\label{cor:BAS1j}
Let $\mu = -\alpha_{1,j}$ for $1 \leq j < r$.
The set $\BAS(\hroot, \mu)$ of basic allowable subwords of $\mathcal{A}(\hroot, \mu)$ consists of 
\begin{enumerate}
\noindent \begin{minipage}{0.45\textwidth}
    \item[(a)] $s_k$ with $1 \leq k < r$,
    \item[(b)] $s_{k+1}s_k$ with $2 \leq k \leq j-1$,
    \item[(c)] $s_ks_{k+1}$ with $2 \leq k \leq \min(j,r-2)$,
\end{minipage}
\begin{minipage}{0.45\textwidth}
    \item[(d)] $s_ks_{k+1}s_k$ with $2 \leq k \leq j-1$, and 
    \item[(e)] $s_{k+2}s_ks_{k+1}$ with $2 \leq k \leq j-2$.
\end{minipage}
\end{enumerate}
\end{corollary}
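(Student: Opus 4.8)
The plan is to invoke Proposition~\ref{prop:MovingToLargerWeights}, which tells us that $\BAS(\hroot,\mu)=\BAS(\hroot,-\hroot)\cap\calA(\hroot,\mu)$ provided that $-\hroot\leq\mu$. First I would verify this hypothesis: since $\mu=-\alpha_{1,j}$ with $j<r$, we have $\mu-(-\hroot)=\hroot-\alpha_{1,j}=\alpha_{j+1}+\alpha_{j+2}+\cdots+\alpha_r$, which is a nonnegative (indeed positive) combination of simple roots, so $-\hroot\leq\mu$ as required. Thus it remains to intersect the list in \Cref{thm:BAS_for_negAlpha} with $\calA(\hroot,\mu)$, that is, to determine which of the basic allowable subwords $b$ for $-\hroot$ satisfy $\wp(b(\hroot+\rho)-\mu-\rho)>0$, equivalently have all-nonnegative simple-root coefficients in $b(\hroot+\rho)-\mu-\rho$.

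The computational engine is \Cref{lem:SmallerComputation} and \Cref{lem:LargerComputations}. For each family (a)--(e) I would compute $b(\hroot+\rho)-\rho$ explicitly as $\hroot-\sum_i c_i\alpha_i$ with the $c_i$ read off from those lemmas, then subtract $\mu=-\alpha_{1,j}$, which adds $1$ to the coefficient of each $\alpha_k$ for $1\leq k\leq j$. The membership condition $b\in\calA(\hroot,\mu)$ becomes the requirement that every resulting coefficient be $\geq 0$. The key structural point, already visible in \Cref{ex:bas_survival}, is that passing from $-\hroot$ to $-\alpha_{1,j}$ \emph{helps} on indices $1,\ldots,j$ (the $+1$ from $-\mu$ can rescue an otherwise-negative coefficient) but does \emph{nothing} on indices $j+1,\ldots,r$, where the coefficients are unchanged from the $-\hroot$ computation. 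So each family's index range gets truncated precisely at the point where the element's influence would push a negative coefficient onto a simple root with index exceeding $j$.

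Concretely, I would treat the five families in turn. For (a), the simple reflections $s_k$: since $\BAS(\hroot,-\hroot)$ already contains all $s_k$ for $1\leq k\leq r$, I need only check which survive; the element $s_r$ will fail because $s_r(\hroot)=\hroot-\alpha_r$ contributes a $-1$ to $\alpha_r$ that $-\mu$ cannot offset (as $r>j$), whereas $s_1,\ldots,s_{r-1}$ survive, giving the range $1\leq k<r$. For the length-two and length-three families (b)--(e), whose influences are $\{k,k+1\}$, $\{k,k+1\}$, $\{k,k+1\}$, and $\{k,k+1,k+2\}$ respectively, I would use \Cref{lem:LargerComputations}(4)--(6) to get the coefficient vectors and then impose that the highest index in the influence not exceed $j$ (so that the $+1$ from $-\mu$ covers the deficit), together with the original $-\hroot$ upper bounds $r-2$ or $r-3$; intersecting these yields the stated ranges $2\leq k\leq j-1$ for (b) and (d), $2\leq k\leq\min(j,r-2)$ for (c), and $2\leq k\leq j-2$ for (e). The main obstacle is bookkeeping: one must be careful that the asymmetry between the left-multiplication families (b),(e) and the right-multiplication family (c) produces the correct truncation—note in particular that (c) survives one index further than (b)/(d), which is why its upper bound is $\min(j,r-2)$ rather than $j-1$—and this is exactly where a case-by-case coefficient computation, rather than a uniform argument, is unavoidable.
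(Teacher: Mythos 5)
Your proposal is correct and follows essentially the same route as the paper: the paper also obtains this corollary by applying \Cref{prop:MovingToLargerWeights} with $\nu=-\hroot$ (whose hypothesis $-\hroot\leq-\alpha_{1,j}$ holds exactly as you verify) and then intersecting the list of \Cref{thm:BAS_for_negAlpha} with $\calA(\hroot,-\alpha_{1,j})$ via the coefficient computations of \Cref{lem:SmallerComputation} and \Cref{lem:LargerComputations}, precisely the method illustrated in \Cref{ex:bas_survival}. One small wording slip worth noting: for indices $m>j$ the coefficients in $b(\hroot+\rho)-\rho-\mu$ agree with the $\mu=0$ computation (they are one \emph{less} than in the $-\hroot$ computation, not unchanged from it), and the criterion is that every index carrying a strictly negative bare coefficient lie in $\{1,\ldots,j\}$ rather than that the whole influence do so---but your case-by-case analysis and final ranges, including the exceptional bound $\min(j,r-2)$ for family (c), are exactly right.
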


\begin{corollary}\label{cor:BASir}
Let $\mu = -\alpha_{i,r}$ for $1 < i \leq r$.
The set $\BAS(\hroot, \mu)$ of basic allowable subwords of $\mathcal{A}(\hroot, \mu)$ consists of
\begin{enumerate}
\noindent \begin{minipage}{0.45\textwidth}
    \item[(a)] $s_k$ with $1 < k \leq r$,
    \item[(b)] $s_{k+1}s_k$ with $\max(i-1,2) \leq k \leq r-2$,
    \item[(c)] $s_ks_{k+1}$ with $i \leq k \leq r-2$,
\end{minipage}
\begin{minipage}{0.45\textwidth}
    \item[(d)] $s_ks_{k+1}s_k$ with $i \leq k \leq r-2$, and 
    \item[(e)] $s_{k+2}s_ks_{k+1}$ with $i \leq k \leq r-3$.
\end{minipage}
\end{enumerate}
\end{corollary}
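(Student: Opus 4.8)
The plan is to deduce this corollary from Proposition~\ref{prop:MovingToLargerWeights} together with the explicit description of $\BAS(\hroot,-\hroot)$ in Theorem~\ref{thm:BAS_for_negAlpha}, exactly as the sentence preceding the three corollaries suggests. The first step is to observe that $-\hroot\leq-\alpha_{i,r}$: indeed $(-\alpha_{i,r})-(-\hroot)=\hroot-\alpha_{i,r}=\alpha_1+\cdots+\alpha_{i-1}$, which is a nonnegative integer combination of positive roots since $i>1$. Hence Proposition~\ref{prop:MovingToLargerWeights} applies with $\nu=-\hroot$ and $\mu=-\alpha_{i,r}$, yielding $\BAS(\hroot,-\alpha_{i,r})=\BAS(\hroot,-\hroot)\cap\calA(\hroot,-\alpha_{i,r})$. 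So it suffices to decide, for each element $\sigma$ listed in Theorem~\ref{thm:BAS_for_negAlpha}, whether $\sigma\in\calA(\hroot,-\alpha_{i,r})$.

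Next I would reduce this membership test to a simple condition on coefficients. In type $A$, $\wp(\xi)>0$ holds exactly when $\xi$ is a nonnegative integer combination of simple roots, so $\sigma\in\calA(\hroot,-\alpha_{i,r})$ if and only if every simple-root coefficient of $\sigma(\hroot+\rho)+\alpha_{i,r}-\rho$ is nonnegative. For $\sigma\in\BAS(\hroot,-\hroot)$, the proof of Proposition~\ref{prop:mu=-hroot} already records that $\sigma(\hroot+\rho)+\hroot-\rho=2\hroot-\sum_{m=1}^r c_m\alpha_m$ with every $c_m\leq 2$. Subtracting $\hroot-\alpha_{i,r}=\alpha_1+\cdots+\alpha_{i-1}$ from both sides gives
\[
\sigma(\hroot+\rho)+\alpha_{i,r}-\rho=\sum_{m=1}^{i-1}(1-c_m)\alpha_m+\sum_{m=i}^{r}(2-c_m)\alpha_m.
\]
Since $c_m\leq 2$ always, the coefficients indexed by $m\geq i$ are automatically nonnegative, so the membership condition collapses to the single requirement that $c_m\leq 1$ for every $1\leq m\leq i-1$; equivalently, no index $m$ with $c_m=2$ lies strictly below $i$.

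Finally I would read off the coefficient vectors $(c_m)$ for the five families (a)--(e) directly from Lemma~\ref{lem:SmallerComputation} and Lemma~\ref{lem:LargerComputations}(3)--(6): for instance $s_{k+1}s_k$ contributes $c_k=1,\ c_{k+1}=2$; $s_ks_{k+1}$ contributes $c_k=2,\ c_{k+1}=1$; $s_ks_{k+1}s_k$ contributes $c_k=c_{k+1}=2$; $s_{k+2}s_ks_{k+1}$ contributes $c_k=2,\ c_{k+1}=1,\ c_{k+2}=2$; and the simple reflection $s_k$ contributes $c_k=1$ for $2\leq k\leq r-1$ but $c_1=2$ and $c_r=2$ at the ends. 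I would then intersect the constraint ``every $c_m=2$ position is $\geq i$'' with the original index bounds of Theorem~\ref{thm:BAS_for_negAlpha}. The leftmost $c=2$ position governs each family, forcing the lower bound $k\geq i-1$ in (b) and $k\geq i$ in (c), (d), and (e), while in (a) the element $s_1$ (with $c_1=2$) is eliminated and $s_r$ survives. The only real care needed is bookkeeping at the boundaries: writing $\max(i-1,2)$ in (b) to respect the original constraint $k\geq 2$, and using $i>1$ to simplify $\max(i,2)=i$ in (c)--(e). That boundary clipping of the new lower bounds against the old ones is the main, though entirely routine, obstacle.
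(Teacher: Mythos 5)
Your proposal is correct and follows essentially the paper's own route: the paper obtains this corollary exactly by applying Proposition~\ref{prop:MovingToLargerWeights} with $\nu=-\hroot$ and computing the intersection $\BAS(\hroot,-\hroot)\cap\calA(\hroot,-\alpha_{i,r})$, with the survival test done coefficientwise as illustrated in Example~\ref{ex:bas_survival}. Your reduction of that test to the condition that every position $m$ with $c_m=2$ satisfies $m\geq i$ is a clean systematization of the same computation, and your boundary bookkeeping (yielding $\max(i-1,2)$ in (b) and eliminating $s_1$ in (a)) matches the stated index ranges.
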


\begin{corollary}\label{cor:BASij}
Let $\mu = -\alpha_{i,j}$ for $1 < i \leq j < r$.
The set $\BAS(\hroot, \mu)$ of basic allowable subwords of $\mathcal{A}(\hroot, \mu)$ consists of 
\begin{enumerate}
\noindent \begin{minipage}{0.45\textwidth}
    \item[(a)] $s_k$ with $1 < k < r$,
    \item[(b)] $s_{k+1}s_k$ with $\max(i-1,2) \leq k \leq j-1$,
    \item[(c)] $s_ks_{k+1}$ with $i \leq k \leq \min(j,r-2)$,
\end{minipage}
\begin{minipage}{0.45\textwidth}
    \item[(d)] $s_ks_{k+1}s_k$ with $i \leq k \leq j-1$, and 
    \item[(e)] $s_{k+2}s_ks_{k+1}$ with $i \leq k \leq j-2$.
\end{minipage}
\end{enumerate}
\end{corollary}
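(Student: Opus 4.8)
The plan is to derive Corollary~\ref{cor:BASij} as a direct application of Proposition~\ref{prop:MovingToLargerWeights}, taking $\nu=-\hroot$ and $\mu=-\alpha_{i,j}$. Since $\alpha_{i,j}\leq\hroot$ whenever $1\leq i\leq j\leq r$ (the difference $\hroot-\alpha_{i,j}$ is a nonnegative sum of simple roots), we have $-\hroot\leq-\alpha_{i,j}$, so Proposition~\ref{prop:MovingToLargerWeights} yields $\BAS(\hroot,-\alpha_{i,j})=\BAS(\hroot,-\hroot)\cap\calA(\hroot,-\alpha_{i,j})$. Thus the entire task reduces to intersecting the explicit list in Theorem~\ref{thm:BAS_for_negAlpha} with the Weyl alternation set $\calA(\hroot,-\alpha_{i,j})$, i.e., determining exactly which of the basic allowable subwords $b$ from Theorem~\ref{thm:BAS_for_negAlpha} satisfy $\wp(b(\hroot+\rho)-(-\alpha_{i,j})-\rho)>0$.

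The key computational step is to evaluate, for each of the five families (a)--(e) in Theorem~\ref{thm:BAS_for_negAlpha}, the quantity $b(\hroot+\rho)+\alpha_{i,j}-\rho$ expanded in the simple root basis, and to check when all coefficients are nonnegative. First I would use Lemma~\ref{lem:SmallerComputation} and Lemma~\ref{lem:LargerComputations} to write $b(\hroot+\rho)-\rho=\hroot-\sum_{k}c_k\alpha_k$ for each $b$, exactly as in the proofs of Proposition~\ref{prop:mu=-hroot} and Lemma~\ref{lem:forbidden words}; adding $\alpha_{i,j}=\sum_{k=i}^{j}\alpha_k$ simply increments the coefficient of $\alpha_k$ by $1$ for each $i\leq k\leq j$. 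Because the indices $1$ and $r$ play a special role (via parts (1) of Lemmas~\ref{lem:SmallerComputation} and~\ref{lem:LargerComputations}, where $s_1$ and $s_r$ alter $\hroot$ itself), and because we are in the generic case $1<i\leq j<r$, adding $\alpha_{i,j}$ never interferes with the $\alpha_1$ or $\alpha_r$ coefficients coming from the $\hroot$ contribution; this is what forces the ranges to exclude $k=1$ and $k=r$ in part (a) and to cap the two-letter and three-letter words at $\min(j,r-2)$ or $j-1$ and $j-2$. The ranges in (b)--(e) then come from pinpointing, for each family, the smallest and largest values of the internal index $k$ for which every coefficient of the result stays nonnegative after the shift by $\alpha_{i,j}$.

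Concretely, I would organize the verification family by family. For the simple reflections (a), a quick check shows $s_k\in\calA(\hroot,-\alpha_{i,j})$ for all $1<k<r$ but not for $k=1$ or $k=r$ in this case, giving the stated range; the Example~\ref{ex:bas_survival} computation is the prototype for how a single negative coefficient eliminates a candidate. For the length-two words (b) and (c), I would compute $s_{k+1}s_k(\hroot+\rho)-\rho$ and $s_ks_{k+1}(\hroot+\rho)-\rho$ using Lemma~\ref{lem:LargerComputations}(4), add $\alpha_{i,j}$, and read off the constraints $\max(i-1,2)\leq k\leq j-1$ and $i\leq k\leq\min(j,r-2)$ respectively as the conditions for nonnegativity. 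The length-three families (d) and (e) are handled identically using Lemma~\ref{lem:LargerComputations}(5)--(6).

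The main obstacle I anticipate is purely bookkeeping: tracking how the shift by $\alpha_{i,j}$ interacts with the coefficient pattern $\hroot-\sum c_k\alpha_k$ of each word, and correctly identifying the boundary indices where a coefficient first turns negative. This is where the asymmetric bounds (the $\max(i-1,2)$ in (b), and the differing right endpoints $j-1$, $\min(j,r-2)$, $j-2$ across the families) arise, and care is needed to ensure the endpoints are off-by-one correct. A cleaner route, which I would prefer, is to avoid recomputing everything from scratch: since $\BAS(\hroot,-\hroot)$ is already known from Theorem~\ref{thm:BAS_for_negAlpha}, it suffices to test membership in $\calA(\hroot,-\alpha_{i,j})$ only for the words already on that list, and for each such $b$ the single inequality that can fail is the nonnegativity of the coefficient on $\alpha_{i-1}$ (on the left) or $\alpha_{j+1}$ (on the right), since the shift $\alpha_{i,j}$ only \emph{raises} the interior coefficients. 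Isolating which boundary coefficient is critical for each family turns the argument into a short finite check and is the cleanest way to pin down the five index ranges claimed in the corollary.
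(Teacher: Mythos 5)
Your primary route is exactly the paper's proof: apply \Cref{prop:MovingToLargerWeights} with $\nu=-\hroot$ to reduce to $\BAS(\hroot,-\alpha_{i,j})=\BAS(\hroot,-\hroot)\cap\calA(\hroot,-\alpha_{i,j})$, then test each word of \Cref{thm:BAS_for_negAlpha} by expanding $b(\hroot+\rho)+\alpha_{i,j}-\rho$ in the simple root basis via \Cref{lem:SmallerComputation,lem:LargerComputations} and checking nonnegativity of every coefficient. Carried out family by family as in your second and third paragraphs, this does produce the five stated ranges, and it is what the paper does (in very compressed form, with \Cref{ex:bas_survival} as the illustrative computation).

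However, the ``cleaner route'' you say you would prefer in your last paragraph rests on a false claim. Writing $b(\hroot+\rho)-\rho=\hroot-\sum_k c_k\alpha_k$ (with $\hroot$ fixed by every $b$ of types (b)--(e) since their influences avoid $1$ and $r$), membership in $\calA(\hroot,-\alpha_{i,j})$ requires $c_k\leq 2$ for $i\leq k\leq j$ but $c_k\leq 1$ for \emph{every} $k\notin[i,j]$; relative to the test against $-\hroot$, the constraint tightens at all positions outside $[i,j]$, not only at $\alpha_{i-1}$ and $\alpha_{j+1}$. The positions with $c_k=2$ sit inside the influence of $b$ ($c_{k+1}=2$ for $s_{k+1}s_k$; $c_k=2$ for $s_ks_{k+1}$; $c_k=c_{k+1}=2$ for $s_ks_{k+1}s_k$; $c_k=c_{k+2}=2$ for $s_{k+2}s_ks_{k+1}$), and that influence can lie arbitrarily far from $[i,j]$. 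Concretely, take $r=8$, $\mu=-\alpha_{2,3}$, $b=s_5s_6$: then
\begin{equation*}
b(\hroot+\rho)+\alpha_{2,3}-\rho=\alpha_1+2\alpha_2+2\alpha_3+\alpha_4-\alpha_5+0\cdot\alpha_6+\alpha_7+\alpha_8,
\end{equation*}
so the coefficients on $\alpha_{i-1}=\alpha_1$ and $\alpha_{j+1}=\alpha_4$ are both fine and your shortcut would keep $b$, yet the coefficient on $\alpha_5$ is negative, and indeed $b$ is excluded by item (c) of the corollary (which caps $k$ at $\min(j,r-2)=3$). The correct short criterion is: $b$ survives if and only if every index $k$ with $c_k=2$ lies in $[i,j]$. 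With that criterion in place of yours, the finite family-by-family check goes through and recovers the claimed ranges.
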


\section{Enumerative results}\label{sec:enumerative}

 Having characterized the basic allowable words for Weyl alternation sets $\calA_r(
 \hroot,\mu
 )$ for negative roots $\mu$ in the Lie algebra of type $A_r$, we now provide some enumerative results and generating functions for these sets.
 To begin, we define the Fibonacci numbers by $F_0 = 0$, $F_1 = 1$, and for $n \geq 2$, $F_n = F_{n-1} + F_{n-2}$. 
Harris's work \cite[Theorem 1.2]{PHThesisPublication} establishes that $\vert \mathcal{A}_r(\hroot,0) \vert = F_r$. 
 We can recover this result from our study of $\mathcal{A}(\hroot,-\hroot)$ and \Cref{prop:MovingToLargerWeights}. 
  More precisely, the only elements from $\BAS(\hroot,-\hroot)$ that would also be in $\mathcal{A}(\hroot,0)$ are commuting products of $\{s_i: 2 \leq i \leq r-1\}$, the number of which is a Fibonacci number. 

We begin this section by enumerating families of Weyl alternation sets $\mathcal{A}_r(\hroot, \mu)$ where $\mu$ is the sum of only a few negative simple roots.
Throughout this section, we let $\BAS_r(\lambda,\mu)$ refer to the basic allowable subwords of $\mathcal{A}_r(\lambda,\mu)$. To prove such results, we require the following lemma. We point the reader to \cite[Proposition 2.3]{Harry} for a proof of the classical combinatorial result.

\begin{lemma}\label{lem:CountFibThings}
The number of subsets of $\{1,\ldots,n\}$ that do not contain a pair of consecutive numbers is $F_{n+2}$.
\end{lemma}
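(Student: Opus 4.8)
The plan is to prove \Cref{lem:CountFibThings}, the statement that the number of subsets of $\{1,\ldots,n\}$ with no two consecutive elements equals $F_{n+2}$. Although the excerpt defers to \cite[Proposition 2.3]{Harry}, a clean self-contained argument is a standard bijective-recursive count. First I would let $a_n$ denote the number of such subsets and establish the two-term recurrence $a_n = a_{n-1} + a_{n-2}$ by a case split on whether the largest element $n$ belongs to the subset. If $n$ is excluded, the subset is an arbitrary ``no two consecutive'' subset of $\{1,\ldots,n-1\}$, contributing $a_{n-1}$; if $n$ is included, then $n-1$ must be excluded, and the remaining elements form a ``no two consecutive'' subset of $\{1,\ldots,n-2\}$, contributing $a_{n-2}$. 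These two cases are disjoint and exhaustive, giving the recurrence.

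Next I would nail down the initial conditions and align the indexing with the Fibonacci numbering fixed earlier in this section (namely $F_0=0$, $F_1=1$, $F_n=F_{n-1}+F_{n-2}$). For $n=0$ there is exactly one such subset, the empty set, so $a_0 = 1 = F_2$; for $n=1$ the subsets are $\emptyset$ and $\{1\}$, so $a_1 = 2 = F_3$. Since $a_n$ and $F_{n+2}$ satisfy the same recurrence and agree at the two base cases $n=0,1$, a routine induction on $n$ yields $a_n = F_{n+2}$ for all $n \geq 0$, which is the claim.

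I do not anticipate a genuine obstacle here, as the result is elementary; the only point requiring care is the bookkeeping of the Fibonacci index shift, ensuring that the base cases $a_0 = F_2 = 1$ and $a_1 = F_3 = 2$ are matched correctly against the convention established at the start of \Cref{sec:enumerative}. An alternative I would mention is the direct combinatorial identity $a_n = \sum_{k\geq 0}\binom{n-k+1}{k}$, obtained by choosing $k$ elements with gaps, which equals $F_{n+2}$ by the well-known diagonal-sum identity for Pascal's triangle; however, the recurrence argument is shorter and more transparent, so I would present that as the main proof and relegate the binomial identity to a remark if space permits.
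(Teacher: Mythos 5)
Your proof is correct. Note that the paper itself does not prove this lemma at all: it simply cites \cite[Proposition 2.3]{Harry} for what it calls a classical combinatorial result, so there is no internal argument to compare against. Your self-contained treatment is the standard one and is sound: the case split on whether $n$ lies in the subset correctly yields $a_n = a_{n-1} + a_{n-2}$ (the two cases are disjoint and exhaustive, and excluding $n-1$ when $n$ is chosen is exactly the constraint needed), and your base cases $a_0 = 1 = F_2$, $a_1 = 2 = F_3$ are aligned correctly with the paper's convention $F_0 = 0$, $F_1 = 1$ fixed at the start of \Cref{sec:enumerative}. A quick sanity check at $n=2$ (the subsets $\emptyset$, $\{1\}$, $\{2\}$, giving $3 = F_4$) confirms the index shift. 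Your supplementary remark on the identity $a_n = \sum_{k \geq 0} \binom{n-k+1}{k}$ is also correct, though, as you say, it is best left as a remark; the recurrence argument is what a reader would expect here and is the likely content of the cited reference. In short, you have supplied a complete proof where the paper delegates to the literature, which is a strictly more self-contained presentation.
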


Our first result considers the case where $\mu$ is a negative root of height $1$, i.e., $\mu = -\alpha_i$ for some index $i$. 

\begin{proposition}\label{prop:EnumerateNegativeSimple}
 Let $r \geq 1$ and fix $1 \leq i \leq r$.
  \begin{enumerate}
     \item If $i = 1$ or $i = r$, then $\vert \mathcal{A}_r(\hroot,-\alpha_i) \vert = F_{r+1}$.
     \item 
     If $r > 2$ and $2 \leq i \leq r-1$, then  $\vert \mathcal{A}_r(\hroot,-\alpha_i) \vert = F_{r} + F_{i-1}F_{r-i-1} + F_{i-2}F_{r-i}$.
 \end{enumerate}
\end{proposition}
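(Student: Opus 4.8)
The plan is to count the pairwise independent subsets of $\BAS_r(\hroot,-\alpha_i)$, since by \Cref{thm:characterize_BAS}(2) these are in bijection with the elements of $\calA_r(\hroot,-\alpha_i)$. The first step is to determine $\BAS_r(\hroot,-\alpha_i)$ explicitly by specializing the corollaries of \Cref{sec:OtherMu} to the case $j=i$. For $i=1$ we use \Cref{cor:BAS1j} with $j=1$, for $i=r$ we use \Cref{cor:BASir} with $i=r$, and for $2\le i\le r-1$ we use \Cref{cor:BASij} with $j=i$. In each case the families (b)--(e) of length-$\geq 2$ basic allowable subwords either collapse or nearly collapse because the index ranges become empty or contain at most one value, so the bulk of $\BAS_r(\hroot,-\alpha_i)$ consists of the simple reflections $s_k$, with a small number of longer elements surviving near the index $i$.

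The second step is the enumeration itself. Since independence is governed by the extended influence overlapping the influence, and the simple reflections $s_k$ and $s_{k'}$ are independent exactly when $|k-k'|\geq 2$, counting pairwise independent subsets consisting only of simple reflections reduces to counting subsets of an index set with no two consecutive elements, which is a Fibonacci count by \Cref{lem:CountFibThings}. For part (1), where $i=1$ or $i=r$, I expect $\BAS_r(\hroot,-\alpha_i)$ to consist essentially of the simple reflections on an index set of size $r-1$ (one endpoint reflection is excluded because the corresponding forbidden words near the boundary kill it), giving $F_{(r-1)+2}=F_{r+1}$ directly. For part (2), the count splits according to whether the independent subset uses a longer basic allowable subword supported near $i$ or not; I would partition the independent subsets into those avoiding the ``central'' region entirely and those using one of the surviving longer elements (such as $s_{i-1}s_i$, $s_is_{i-1}$, or $s_{i-1}s_is_{i-1}$-type words) or the reflection $s_i$, each choice forcing certain neighboring reflections to be excluded and thereby factoring the count into a product of a Fibonacci number for the indices to the left of the blocked region and one for the indices to the right.

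Concretely, I would show that the independent subsets contributing to $\calA_r(\hroot,-\alpha_i)$ fall into three classes whose sizes are $F_r$, $F_{i-1}F_{r-i-1}$, and $F_{i-2}F_{r-i}$. The term $F_r$ should come from independent sets that do not involve $s_i$ or any element influencing index $i$ in a way that blocks two neighbors; the two product terms should come from independent sets that do include a central longer element, which severs the index line into a left segment of length $\approx i-2$ and a right segment of length $\approx r-i-1$, with the two differing product expressions reflecting the two distinct central configurations (for instance one using $s_i$ flanked by blocked neighbors and one using a length-two or length-three word). Matching each class to its Fibonacci factor via \Cref{lem:CountFibThings} then yields the stated formula.

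The main obstacle I anticipate is the careful bookkeeping of exactly which longer basic allowable subwords survive near the boundary of the index range and precisely how each one blocks its neighbors under the independence relation---in other words, correctly identifying the three combinatorial classes and the lengths of the left and right segments so that the Fibonacci indices come out as $F_r$, $F_{i-1}F_{r-i-1}$, and $F_{i-2}F_{r-i}$ rather than off-by-one neighbors. The endpoint cases $i=1,r$ and the small-rank cases (such as $r=2$, excluded in part (2)) will require separate verification, and I would double-check the formula against the explicit $\calA_4(\hroot,-\hroot)$-style computations and the known special value $|\calA_r(\hroot,0)|=F_r$ to catch indexing errors.
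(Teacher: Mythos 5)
Your proposal is correct and takes essentially the same route as the paper's proof: specialize \Cref{cor:BAS1j}, \Cref{cor:BASir}, and \Cref{cor:BASij} to $j=i$, then count pairwise independent subsets (via \Cref{thm:characterize_BAS} and \Cref{lem:CountFibThings}), splitting part (2) into exactly your three classes---no long subword ($F_r$), containing $s_is_{i+1}$ ($F_{i-1}F_{r-i-1}$), and containing $s_is_{i-1}$ ($F_{i-2}F_{r-i}$). The bookkeeping you flagged resolves as the paper does it: for $2<i<r-1$ the only surviving long elements are $s_is_{i-1}$ and $s_is_{i+1}$ (no length-three words survive), and the boundary cases $i=2$ and $i=r-1$, where only one long element survives, still satisfy the formula because $F_0=0$.
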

\begin{proof} For item (1):
Suppose $i = 1$. Then, \Cref{cor:BAS1j} yields  $\BAS_r(\hroot,-\alpha_1) = \{s_k : 1 \leq k < r\}$, so that $\mathcal{A}_r(\hroot,-\alpha_1)$ consists of commuting products of $r-1$ simple transpositions. 
{Since $s_i$ and $s_j$ commute if and only if $i$ and $j$ are nonconsecutive, this exactly corresponds to subsets of $\{1,\ldots,r-1\}$ that do not contain a pair of consecutive numbers.} By \Cref{lem:CountFibThings}, the number of such sets is $F_{r+1}$.  The proof for $i=r$ is similar.

 For item (2): First, suppose $i = 2$.  By \Cref{cor:BASij}, $\BAS_r(\hroot,-\alpha_2) = \{s_k : 1 < k < r\} \cup \{s_{2}s_{3}\}$. If $r > 3$, the elements of $\mathcal{A}_r(\hroot,-\alpha_2)$ can be split into two sets: those that contain $s_2s_3$ as a subword and those that do not. The latter can be identified with $\mathcal{A}_r(\hroot,0)$ so that there are $F_{r}$ such elements. The former are in bijection with commuting products of $\{s_k: 5 \le k < r\}$, of which there are $F_{r-3}$ considering \Cref{lem:CountFibThings}. If $r = 3$, then we only have the former case; since we defined $F_{-1} = 0$, the formula still holds. 
 Thus,
 \[\vert\mathcal{A}_r(\hroot,-\alpha_2)\vert=F_r+F_{r-3}=F_r+F_1F_{r-3}+F_0F_{r-2}\]
 as claimed. 
 The case $i = r-1$ is analogous.

Next, suppose $2 < i < r-1$. By \Cref{cor:BASij}, $\BAS_r(\hroot,-\alpha_i) = \{s_k : 1 < k < r\} \cup \{s_{i}s_{i-1}, s_is_{i+1}\}$. 
We now can break $\mathcal{A}_r(\hroot,-\alpha_i)$ into three disjoint sets:
\begin{enumerate}
    \item those elements without subwords $s_is_{i-1}$ and $s_{i}s_{i+1}$, 
    \item those with subword $s_is_{i+1}$ and without $s_{i}s_{i-1}$, and
    \item those with subword $s_{i}s_{i-1}$ and without $s_{i}s_{i+1}$.
    \end{enumerate}
Since elements in the Weyl alternation set are products of pairwise independent subsets of the basic allowable subwords, we have no elements in $\mathcal{A}_r(\hroot,-\alpha_i)$ with both $s_is_{i+1}$ and $s_{i}s_{i-1}$ as subwords. 

The number of elements described by (1) is again $F_{r}$. 
The number of elements described by (2) is equal to the number of pairwise commuting products of $\{s_j: 2 \leq j \leq i-2\}$ and $\{s_k: i+3 \leq k \leq r-1\}$, of which there are $F_{i-1}F_{r-i-1}$ considering \Cref{lem:CountFibThings}.

The number of elements (3) is equal to the number of pairwise commuting products of $\{s_j: 2 \leq j \leq i-3\}$ and $\{s_k:  i+2 \leq j \leq r-1\}$, of which there are $F_{i-2}F_{r-i}$ considering \Cref{lem:CountFibThings}.
Summing the contributions of these sets gives us
\[\vert\mathcal{A}_r(\hroot,-\alpha_i)\vert=F_r+F_{i-1}F_{r-i-1}+F_{i-2}F_{r-i},\] as desired.
\end{proof}

If we set $i = 3$ in item (2) of \Cref{prop:EnumerateNegativeSimple}, we recover the Lucas numbers.

\begin{corollary}
Define the Lucas numbers by $L_0 = 2$, $L_1 = 1$ and $L_n = L_{n-1} +L_{n-2}$ for all $n \geq 2$. Then, if $r \geq 4$, we have $\vert \mathcal{A}_r(\hroot, -\alpha_3) \vert = L_{r-1}$.
\end{corollary}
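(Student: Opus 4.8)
The plan is to specialize the formula from \Cref{prop:EnumerateNegativeSimple}~(2) to the case $i=3$ and then recognize the resulting Fibonacci expression as a Lucas number. First I would substitute $i=3$ directly into the formula $\vert \mathcal{A}_r(\hroot,-\alpha_i) \vert = F_r + F_{i-1}F_{r-i-1} + F_{i-2}F_{r-i}$, which is valid here since the hypothesis $r\geq 4$ guarantees $2 < i = 3$ and $i = 3 < r-1$, placing us squarely in the regime $2 < i < r-1$ covered by item (2). This substitution yields $\vert \mathcal{A}_r(\hroot,-\alpha_3)\vert = F_r + F_2 F_{r-4} + F_1 F_{r-3}$.

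Next I would simplify using the base values $F_1 = 1$ and $F_2 = 1$, reducing the expression to $F_r + F_{r-4} + F_{r-3}$. Applying the Fibonacci recurrence $F_{r-3} + F_{r-4} = F_{r-2}$, this collapses to $F_r + F_{r-2}$. The final step is to identify $F_r + F_{r-2}$ with $L_{r-1}$, which follows from the standard identity $L_n = F_{n+1} + F_{n-1}$ relating the Lucas and Fibonacci sequences; taking $n = r-1$ gives $L_{r-1} = F_r + F_{r-2}$, exactly matching our simplified expression.

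The only genuine point requiring care is confirming the Lucas--Fibonacci identity $L_n = F_{n+1}+F_{n-1}$, which can itself be verified by a one-line induction on $n$ using the shared recurrence and the base cases $L_0 = 2 = F_1 + F_{-1}$ and $L_1 = 1 = F_2 + F_0$, together with the convention $F_{-1}=0$ already adopted in the excerpt. I anticipate no real obstacle here: the entire argument is an elementary algebraic manipulation of Fibonacci identities once \Cref{prop:EnumerateNegativeSimple} is invoked, and the hypothesis $r\geq 4$ ensures all indices appearing (down to $F_{r-4}$) are nonnegative, so no edge cases involving negative-index Fibonacci numbers beyond the established $F_{-1}=0$ convention arise.
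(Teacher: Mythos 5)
Your proposal is correct and takes essentially the same route as the paper, whose proof of this corollary is simply the remark that setting $i=3$ in item (2) of \Cref{prop:EnumerateNegativeSimple} yields $F_r+F_2F_{r-4}+F_1F_{r-3}=F_r+F_{r-2}=L_{r-1}$, exactly your computation. Two small patches are needed in your justification, neither of which affects the result: first, when $r=4$ you do \emph{not} have $i=3<r-1$ (they are equal), but this is harmless because item (2) is stated for the full range $2\le i\le r-1$, so the formula still applies at $i=3$ for every $r\ge 4$; second, under the paper's convention $F_{-1}=0$ your claimed base case $L_0=2=F_1+F_{-1}$ is false (it equals $1$), so the induction establishing $L_n=F_{n+1}+F_{n-1}$ should instead be anchored at $n=1$ and $n=2$, where $L_1=1=F_2+F_0$ and $L_2=3=F_3+F_1$, which suffices since you only invoke the identity at $n=r-1\ge 3$.
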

Next, we consider the case where $\mu$ is a negative root of height 2. As arguments for the cardinalities of subsets of $\mathcal{A}_r(\hroot,-\alpha_i - \alpha_{i+1})$ considered in the proof of the following result are similar to those given in the proof \Cref{prop:EnumerateNegativeSimple}, for the sake of brevity, such arguments are omitted.

\begin{proposition} $ $
\begin{enumerate}
    \item If $r\geq 2$ and $i = 1$ or $i = r-1$, then $\vert \mathcal{A}_r(\hroot,-\alpha_i - \alpha_{i+1})\vert= F_{r+1} + F_{r-3} = 3 F_{r-1}$.
    \item If $r\geq 4$ and $2 \leq i \leq r-2$, then $\vert \mathcal{A}_r(\hroot,-\alpha_i - \alpha_{i+1})\vert = F_{r} + F_{i-2}F_{r-i} + 3 F_{i-1} F_{r-i-1} + F_{i} F_{r-i-2}$. 
\end{enumerate}
\end{proposition}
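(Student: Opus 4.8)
The plan is to compute, for each of the two ranges of $i$, the cardinality $|\mathcal{A}_r(\hroot,-\alpha_i-\alpha_{i+1})|$ by first invoking the appropriate corollary from \Cref{sec:OtherMu} to pin down $\BAS_r(\hroot,-\alpha_i-\alpha_{i+1})$, and then counting pairwise independent subsets of this set via \Cref{thm:characterize_BAS}(2). Recall that $-\alpha_i-\alpha_{i+1}=-\alpha_{i,i+1}$, so for item (1) with $i=1$ I would apply \Cref{cor:BAS1j} (with $j=2$), for $i=r-1$ the symmetric analogue via \Cref{cor:BASir}, and for item (2) with $2\leq i\leq r-2$ I would apply \Cref{cor:BASij} (with $j=i+1$). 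The key observation making the count tractable is that the basic allowable subwords split into the ``background'' simple transpositions $\{s_k:1<k<r\}$ (or the appropriate truncation near the boundary) together with a small cluster of length-$2$ and length-$3$ words localized around the indices $i,i+1,i+2$; any element of the alternation set is determined by which mutually independent cluster-words it uses, with the remaining background transpositions freely chosen subject to the nonconsecutivity (commuting) condition counted by \Cref{lem:CountFibThings}.

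The main step is the casework on which of the localized words appear. Following the template of \Cref{prop:EnumerateNegativeSimple}, I would partition $\mathcal{A}_r(\hroot,-\alpha_{i,i+1})$ according to which of the nonsimple basic allowable subwords (the $s_{k}s_{k\pm1}$, $s_ks_{k+1}s_k$, and $s_{k+2}s_ks_{k+1}$ words supported near $\{i,i+1\}$) are used; pairwise independence forbids using two words whose extended influences overlap, so at most one such word can appear, yielding a short list of mutually exclusive subcases. For each subcase I would identify the remaining free indices as two intervals to the left and right of the occupied block, count the nonconsecutive subsets of each interval using \Cref{lem:CountFibThings}, and multiply. Summing over subcases should produce the terms $F_r$, $F_{i-2}F_{r-i}$, $3F_{i-1}F_{r-i-1}$, and $F_iF_{r-i-2}$ in item (2), where the coefficient $3$ reflects the three distinct ``central'' words (roughly $s_is_{i+1}$, $s_{i+1}s_i$, and $s_is_{i+1}s_i$, or their relevant analogues) that each leave the same pair of free intervals, each counted by $F_{i-1}F_{r-i-1}$.

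For item (1) the boundary index $i=1$ simplifies matters: near the edge the cluster degenerates (several of the length-$2$ and length-$3$ words are excluded by the index ranges in \Cref{cor:BAS1j}), so the count reduces to background transpositions plus a single family of central words, giving $F_{r+1}+F_{r-3}$; I would then verify the identity $F_{r+1}+F_{r-3}=3F_{r-1}$ directly from the Fibonacci recurrence (since $F_{r+1}=F_r+F_{r-1}=2F_{r-1}+F_{r-2}$ and $F_{r-3}=F_{r-1}-F_{r-2}$). The case $i=r-1$ follows by the left-right symmetry of the Dynkin diagram, which exchanges $s_k\leftrightarrow s_{r+1-k}$ and fixes the highest root, so that $|\mathcal{A}_r(\hroot,-\alpha_{r-1,r})|=|\mathcal{A}_r(\hroot,-\alpha_{1,2})|$.

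I expect the main obstacle to be bookkeeping rather than conceptual: correctly determining the exact index ranges of the free background intervals in each subcase, especially handling the off-by-one boundary effects when $i$ is small (e.g. $i=2$) or large (e.g. $i=r-2$), where some of the localized words are absent and the formula must still be verified against the conventions $F_{-1}=0$, $F_0=0$. As the problem statement notes, these boundary degeneracies are precisely why the argument parallels but is not identical to \Cref{prop:EnumerateNegativeSimple}, and since the paper explicitly omits the details ``for the sake of brevity,'' the substance of a full proof lies in carefully tabulating the mutually exclusive subcases and confirming that the Fibonacci products assemble into the stated closed forms.
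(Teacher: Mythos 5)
Your proposal is correct and takes essentially the same route as the paper's proof: both determine $\BAS_r(\hroot,-\alpha_{i,i+1})$ from \Cref{cor:BAS1j}, \Cref{cor:BASir}, and \Cref{cor:BASij}, observe that pairwise independence permits at most one non-simple basic allowable subword per element, and count each resulting subcase by applying \Cref{lem:CountFibThings} to the two free intervals of background indices. In particular, the paper's five central words $s_is_{i-1}$, $s_{i+1}s_i$, $s_is_{i+1}$, $s_{i+1}s_{i+2}$, $s_is_{i+1}s_i$ (with the middle three contributing $F_{i-1}F_{r-i-1}$ each, explaining the coefficient $3$) match your subcase decomposition exactly, and the boundary degenerations at $i=2$, $i=r-2$, and small $r$ that you flag are handled in the paper precisely as you anticipate.
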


\begin{proof}
For item (1): 
Suppose $r\ge 2$ and $i = 1$. 
\begin{itemize}
\item If $r=2$, then $-\alpha_1-\alpha_2=-\hroot$.
By 
\Cref{thm:BAS_for_negAlpha},
we have that $\BAS_2(\hroot, -\hroot)=\{s_1,s_2\}$. 
Hence,
$|\calA_2(\hroot,-\hroot)|=|\{1,s_1,s_2\}|=3=3F_{1}$.

\item If $r=3$, then by 
\Cref{cor:BAS1j},
we have that $\BAS_3(\hroot, -\alpha_1-\alpha_2)=\{s_1,s_2\}$. Hence,
$|\calA_3(\hroot,-\alpha_1-\alpha_2)|=|\{1,s_1,s_2\}|=3=3F_{2}$.
\item Now for $r\geq 4$, using \Cref{cor:BAS1j}, we have that $\BAS_r(\hroot,-\alpha_1-\alpha_2) = \{s_k : 1 \leq k < r\} \cup \{s_2s_3\}$. 
Hence, $\mathcal{A}_r(\hroot,-\alpha_1-\alpha_2)$ can be split into two sets of elements: those that contain $s_2s_3$ as a subword and those that do not.
The number of those elements not containing $s_2s_3$ is $F_{r+1}$ because we are choosing nonconsecutive indices from the set $\{1,2,\ldots,r-1\}$.
The number of elements containing $s_2s_3$ is given by $F_{r-3}$. Hence $|\calA_r(\hroot,-\alpha_1-\alpha_2)|=F_{r+1}+F_{r-3}=3F_{r-1}$.
\end{itemize}
When $i=r-1$, the argument is analogous.

\vspace{.1in}

For item (2): Suppose $r\geq 4$ and $2 \leq i \leq r-2$.
\begin{itemize}
 \item If $2 <i < r-2$, then by \Cref{cor:BASij}, we have that \[\BAS(\hroot, -\alpha_i - \alpha_{i+1}) = \{s_k : 1 < k < r\} \cup \left\{\begin{array}{l}
      \tau_1=s_is_{i-1},\\
      \tau_2=s_{i+1}s_i,\\
      \tau_3=s_is_{i+1},\\
      \tau_4=s_{i+1}s_{i+2},\\
      \tau_5=s_is_{i+1}s_i
 \end{array}\right\}.\]
 We can partition $\mathcal{A}_r(\hroot,-\alpha_i)$ into the following disjoint sets:
\begin{enumerate}
    \item those elements containing $\tau_1$,
    \item those elements containing $\tau_2$,
    \item those elements containing $\tau_3$,
    \item those elements containing $\tau_4$,
    \item those elements containing $\tau_5$, and
    \item those elements without any $\tau_n$ for $1\leq n\leq 5$. 
\end{enumerate}
Observe that whenever an element contains $\tau_n$ with $n\in\{1,2,3,4,5\}$, it cannot contain $\tau_m$ with $m \neq n$ as no two elements $\tau_n$ are independent. 
 
The sets of elements considered in cases (2), (3), and (5) have the same cardinality. 
In particular, the elements in each case are in bijection with the number of ways to select nonconsecutive indices from $\{2,3,\ldots,i-2\} \cup \{i+3,i+4,\ldots,r-1\}$, so that {there are} $F_{i-1}F_{r-i-1}$ such elements.
Hence, we have in total over these three cases $3F_{i-1}F_{r-i-1}$ elements in the Weyl alternation set.
Similarly, the number of elements considered in case (1) is given by $F_{i-2}F_{r-i}$; the number of elements considered in case (4) is given by $F_{i}F_{r-i-2}$; and,
the number of elements considered in (6) is given by $F_{r}$.

Therefore,
\[|\calA_r(\hroot,-\alpha_i-\alpha_{i+1})|=3F_{i-1}F_{r-i-1}+F_{i-2}F_{r-i}+F_{i}F_{r-i-2}+F_{r}.\]
\item If $i = 2$, then the same argument holds as in the previous item, except we do not have a term $s_{i-1}s_i$ since $s_1$ is not in the support of any element of $\mathcal{A}(\hroot, -\alpha_2 -\alpha_3)$. Since $F_{0} = 0$,
the formula still holds.

\item If $i = r-2$, then the argument is analogous to the case where $i=2$. In this case, there is no $s_{i+1}s_{i+2}$ term since $s_r$ is not in the support of any element of $\mathcal A(\hroot, -\alpha_{r-2}-\alpha_{r-1})$. Since $F_{r-i-2}=F_0=0$, the formula still holds. \qedhere
\end{itemize}
\end{proof}

 The prevalence of sums of Fibonacci numbers in the previous results comes from a general behavior concerning cardinalities of these Weyl alternation sets.
  Henceforth, we fix a pair $1 \leq i \leq j$, let $\mu=-(\alpha_i + \alpha_{i+1} + \cdots + \alpha_j)$, vary $r$, and compute $\mathcal{A}_r(\hroot, \mu)$. 
 Naturally, this expression only makes sense when $r \geq j$. Next, we show that the cardinality of $\mathcal{A}_r(\hroot,\mu)$ satisfies the Fibonacci recurrence.

\begin{proposition}\label{prop:RecurrenceOnSizesFixedRoot}
Let $r\geq 3$ and 
 $1 \leq i \leq j \leq r-2 $. If $\mu = -\alpha_{i,j}$, 
then
\[
\vert \mathcal{A}_r(\hroot, \mu) \vert = \vert \mathcal{A}_{r-1}(\hroot, \mu) \vert  + \vert \mathcal{A}_{r-2}(\hroot, \mu) \vert. 
\]
\end{proposition}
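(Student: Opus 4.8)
The plan is to count using \Cref{thm:characterize_BAS}, which puts $|\mathcal{A}_r(\hroot,\mu)|$ in bijection with the pairwise independent subsets of $\BAS_r(\hroot,\mu)$. Fixing $i\le j$ and letting $r$ vary subject to $j\le r-2$, I would compare $\BAS_r$, $\BAS_{r-1}$, and $\BAS_{r-2}$ using the explicit descriptions in \Cref{cor:BASij} (or \Cref{cor:BAS1j} when $i=1$), together with \Cref{cor:BASir} and \Cref{thm:BAS_for_negAlpha} for the boundary values of $\mu$. The key observation is that each basic allowable subword of types (b)--(e) has influence contained in $\{i-1,\dots,j+1\}$, so these together with the simple reflections $s_k$ with $k\le j+1$ form a ``head'' that is independent of $r$; the only $r$-dependence is a ``tail'' of simple reflections $s_{j+2},\dots,s_{r-1}$, which forms a path in the dependence graph attached to the head only through the elements $s_{j+1}$ and $s_js_{j+1}$.

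For the generic range $j\le r-3$ this yields a clean transfer argument. Comparing the index ranges shows $\BAS_r=\BAS_{r-1}\sqcup\{s_{r-1}\}$, the difference being the single newly admitted reflection $s_{r-1}$. Since $\overline{I}(s_{r-1})=\{r-2,r-1,r\}$, the elements of $\BAS_r$ that are independent of $s_{r-1}$ are exactly those with influence inside $\{1,\dots,r-3\}$, and a further comparison of ranges identifies this collection with $\BAS_{r-2}$ (the independence relations agree, since every such element is ``interior,'' away from node $r-1$). I would then split the pairwise independent subsets $B\subseteq\BAS_r$ by whether $s_{r-1}\in B$: those omitting $s_{r-1}$ are precisely the pairwise independent subsets of $\BAS_{r-1}$, contributing $|\mathcal{A}_{r-1}(\hroot,\mu)|$, while those containing it are $\{s_{r-1}\}\cup B'$ with $B'$ a pairwise independent subset of $\BAS_{r-2}$, contributing $|\mathcal{A}_{r-2}(\hroot,\mu)|$.

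The main obstacle is the boundary case $j=r-2$, where both reductions above fail. Here the truncation $\min(j,r-2)=j$ in type (c) produces a \emph{second} element of index $r-1$, namely $s_{r-2}s_{r-1}$, so $\BAS_r\ne\BAS_{r-1}\sqcup\{s_{r-1}\}$; moreover the description of $\BAS_{r-2}$ must be read off \Cref{cor:BASir} (or \Cref{thm:BAS_for_negAlpha} when $i=1$) rather than \Cref{cor:BASij}, since $\mu=-\alpha_{i,r-2}$ now reaches the last node in rank $r-2$. To handle this I would partition the pairwise independent subsets $B$ into three mutually exclusive cases according to how $B$ meets index $r-1$: it contains neither $s_{r-1}$ nor $s_{r-2}s_{r-1}$; it contains $s_{r-1}$; or it contains $s_{r-2}s_{r-1}$. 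The first case recovers $\BAS_{r-1}$ exactly, giving $|\mathcal{A}_{r-1}(\hroot,\mu)|$. For the other two I would check, again by matching index ranges, that the elements independent of $s_{r-1}$ form $\BAS_{r-2}\setminus\{s_{r-2}\}$, while those independent of $s_{r-2}s_{r-1}$ form the subset of $\BAS_{r-2}$ with influence inside $\{1,\dots,r-4\}$. Since conditioning on whether the last node $s_{r-2}$ lies in a pairwise independent subset of $\BAS_{r-2}$ splits those subsets in exactly this way, the last two cases together contribute $|\mathcal{A}_{r-2}(\hroot,\mu)|$, and summing the three cases recovers the Fibonacci recurrence in the boundary case as well.
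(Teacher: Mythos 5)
Your proposal is correct and follows essentially the same route as the paper's proof: the same split into the generic case $j\le r-3$, where $\BAS_r(\hroot,\mu)=\BAS_{r-1}(\hroot,\mu)\sqcup\{s_{r-1}\}$ and one conditions on the presence of $s_{r-1}$, and the boundary case $j=r-2$, with the same three-way split on containing neither, $s_{r-1}$, or $s_{r-2}s_{r-1}$, recombined by conditioning on whether $s_{r-2}$ appears in an element counted by $\vert\mathcal{A}_{r-2}(\hroot,\mu)\vert$. The only difference is presentational: you phrase the bijections at the level of pairwise independent subsets of $\BAS_r(\hroot,\mu)$ via \Cref{thm:characterize_BAS} and verify the index-range and independence claims explicitly, whereas the paper states the same bijections directly on elements of the Weyl alternation sets.
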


\begin{proof}
First, suppose $j < r-2$. From \Cref{cor:BAS1j} and \Cref{cor:BASij}, we get that $\BAS_r(\hroot,\mu) = \BAS_{r-1}(\hroot,\mu) \cup \{s_{r-1}\}$, so we can partition the elements of $\mathcal{A}_r(\hroot,\mu)$ into two sets based on whether the elements contain $s_{r-1}$ as a subword or not. 
The set with the elements not containing $s_{r-1}$ is equivalent to $\mathcal{A}_{r-1}(\hroot,\mu)$. The set of elements containing $s_{r-1}$ are in bijection with $\mathcal{A}_{r-2}(\hroot,\mu)$. 
Thus, $|\calA_r(\hroot,\mu)|=|\calA_{r-1}(\hroot,\mu)|+|\calA_{r-2}(\hroot,\mu)|$, as claimed.  

Now, let $j = r-2$. In this case, there are two elements of $\BAS_r(\hroot,\mu)$ containing a $s_{r-1}$ in their reduced expressions: $s_{r-1}$ and $s_{r-2}s_{r-1}$. 
The elements of $\mathcal{A}_r(\hroot,\mu)$ not including either of these as a subword can again be regarded as elements of $\mathcal{A}_{r-1}(\hroot,\mu)$.
Hence, there are $|\mathcal{A}_{r-1}(\hroot,\mu)|$ such elements. 
Now, consider the subset of $\mathcal{A}_r(\hroot,\mu)$ consisting of elements containing the basic allowable subword $s_{r-1}$. 
These elements are in bijection with elements of $\mathcal{A}_{r-2}(\hroot,\mu)$ that do not include the basic allowable subword $s_{r-2}$ via the map which removes $s_{r-1}$.
Similarly, the subset of elements in $\mathcal{A}_r(\hroot,-\mu)$ consisting of elements containing the basic allowable subword $s_{r-2}s_{r-1}$ is in bijection with elements of $\mathcal{A}_{r-2}(\hroot,-\mu)$ that include the basic allowable subword $s_{r-2}$. 
As elements in $\calA_{r-2}(\hroot,\mu)$ either contain $s_{r-2}$ or do not, the total number of elements is given by $|\calA_{r-2}(\hroot,\mu)|$. Thus, the total number of elements of $\calA_r(
\hroot,\mu
)$ containing $s_{r-1}$ or $s_{r-2}s_{r-1}$ is also given by $|\calA_{r-2}(\hroot,\mu)|$.
Taking the sum over these cases shows that the desired recurrence still holds.
\end{proof}

\begin{figure}
\begin{center}
    \begin{tikzpicture}[scale=.8]
            \node[text=c1] (A) at (0,-4) {\fbox{$1$}};
            
            \node[text=c1] (B) at (-3,-2) {\fbox{$s_2$}};
            \node[text=c1] (C) at (0,-2) {\fbox{$s_3$}};
            \node[text=c1] (D) at (3,-2) {\fbox{$s_4$}};
            
            \node[text=c1] (E) at (-6,0) {\fbox{$s_3s_2$}};
            \node[text=c1] (F) at (-3,0) {\fbox{$s_2s_3$}};
            \node[text=c1] (G) at (0,0) {\fbox{$s_2s_4$}};
            \node[text=c1] (H) at (3,0) {\fbox{$s_4s_3$}};
            \node[text=c1] (I) at (6,0) {\fbox{$s_3s_4$}};
            
            \node[text=c1] (J) at (-6,2) {\fbox{$s_2s_3s_2$}};
            \node[text=c1] (K) at (0,2) {\fbox{$s_2s_4s_3$}};
            \node[text=c1] (L) at (6,2) {\fbox{$s_3s_4s_3$}};

            \draw[c1, ultra thick] (A) -- (B);
            \draw[c1, ultra thick] (A) -- (C);
            \draw[c1, ultra thick] (A) -- (D);
            
            \draw[c1, ultra thick] (B) -- (E);
            \draw[c1, ultra thick] (B) -- (G);
            \draw[c1, ultra thick] (C) -- (F);
            \draw[c1, ultra thick] (C) -- (H);
            \draw[c1, ultra thick] (D) -- (G);
            \draw[c1, ultra thick] (D) -- (I);
            
            \draw[c1, ultra thick] (E) -- (J);
            \draw[c1, ultra thick] (F) -- (J);
            \draw[c1, ultra thick] (F) -- (K);
            \draw[c1, ultra thick] (H) -- (K);
            \draw[c1, ultra thick] (H) -- (L);
            \draw[c1, ultra thick] (I) -- (L);

            \node[text=c1] (AL) at (-4.5,-2) {$s_5$};
            \node[text=c1] (BL) at (-7.5,0) {$s_2s_5$};
            \node[text=c1] (CL) at (-4.5,0) {$s_3s_5$};
            \node[text=c1] (EL) at (-10.5,2) {$s_3s_2s_5$};
            \node[text=c1] (FL) at (-7.5,2) {$s_2s_3s_5$};
            \node[text=c1] (JL) at (-10.5,4) {$s_2s_3s_2s_5$};

            \draw[c1,  ultra thick] (AL) -- (BL);
            \draw[c1,  ultra thick] (BL) -- (EL);
            \draw[c1,  ultra thick] (CL) -- (FL);
            \draw[c1,  ultra thick] (EL) -- (JL);
            \draw[c1,  ultra thick] (FL) -- (JL);

            \draw[c1,  ultra thick] (A) -- (AL);
            \draw[c1,  ultra thick] (B) -- (BL);
            \draw[c1,  ultra thick] (C) -- (CL);
            \draw[c1,  ultra thick] (E) -- (EL);
            \draw[c1,  ultra thick] (F) -- (FL);
            \draw[c1,  ultra thick] (J) -- (JL);

            \node (AR) at (1.5,-2) {\underline{$s_6$}};
            
            \node (BR) at (-1.5,0) {\underline{$s_2s_6$}};
            \node (CR) at (1.5,0) {\underline{$s_3s_6$}};
            \node (DR) at (4.5,0) {\underline{$s_4s_6$}};
            
            \node (ER) at (-4.5,2) {\underline{$s_3s_2s_6$}};
            \node (FR) at (-1.5,2) {\underline{$s_2s_3s_6$}};
            \node (GR) at (1.5,2) {\underline{$s_2s_4s_6$}};
            \node (HR) at (4.5,2) {\underline{$s_4s_3s_6$}};
            \node (IR) at (7.5,2) {\underline{$s_3s_4s_6$}};
            
            \node (JR) at (-4.5,4) {\underline{$s_2s_3s_2s_6$}};
            \node (KR) at (1.5,4) {\underline{$s_2s_4s_3s_6$}};
            \node (LR) at (7.5,4) {\underline{$s_3s_4s_3s_6$}};

            \foreach \n in {A,B,C,D,E,F,G,H,I,J,K,L}{
                \draw[black, dashed] (\n) -- (\n R);        
    }

            \draw[black] (AR) -- (BR);
            \draw[black] (AR) -- (CR);
            \draw[black] (AR) -- (DR);
            
            \draw[black] (BR) -- (ER);
            \draw[black] (BR) -- (GR);
            \draw[black] (CR) -- (FR);
            \draw[black] (CR) -- (HR);
            \draw[black] (DR) -- (GR);
            \draw[black] (DR) -- (IR);
            
            \draw[black] (ER) -- (JR);
            \draw[black] (FR) -- (JR);
            \draw[black] (FR) -- (KR);
            \draw[black] (HR) -- (KR);
            \draw[black] (HR) -- (LR);
            \draw[black] (IR) -- (LR);
    \end{tikzpicture}
\end{center}
\caption{The Hasse diagram for $\calA_7(\tilde\alpha,\mu)$ in the left weak Bruhat order with $\mu=-\alpha_{2,4}$. The blue subposet with thick edges is $\calA_6(\tilde\alpha,\mu)$, and the subposet of boxed values is $\calA_5(\tilde\alpha,\mu)$. Hence,  $\calA_7(\tilde\alpha,\mu)$ decomposes into two pieces: one being $\calA_6(\tilde\alpha,\mu)$ and the other being a translation of $\calA_5(\tilde\alpha,\mu)$ by $s_6$ along the dotted edges.}\label{fig:recursive_hasse}
\end{figure}
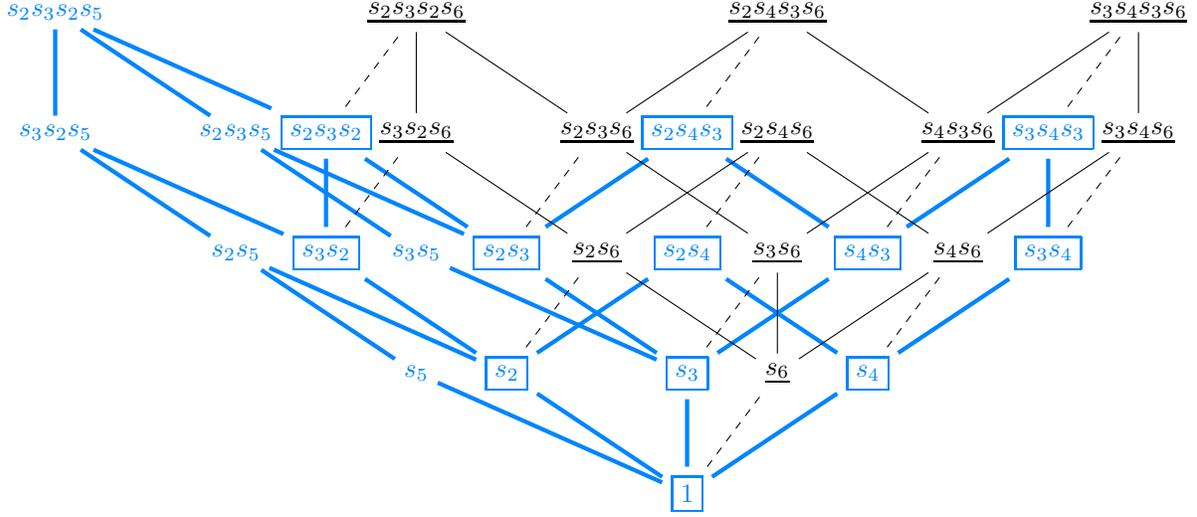

\Cref{fig:recursive_hasse} gives an example of the recursion in \Cref{prop:RecurrenceOnSizesFixedRoot}. To prepare for later computations, we provide a general result concerning the generating function for any sequence defined by the two term homogeneous linear recurrence $a_n=a_{n-1}+a_{n-2}$.

\begin{lemma}\label{lem:FibWIthOtherStartingPOints}
Let $\{a_i\}_{i\geq0}$ be a sequence such that $a_i = 0$ for $0 \leq i < r$, $a_r = c$, $a_{r+1} = d$, and for $i \geq r+2$, $a_i = a_{i-1} + a_{i-2}$. Then,\[
\sum_{i\geq0} a_ix^i = \frac{cx^r + (d-c)x^{r+1}}{1-x-x^2}.
\]
\end{lemma}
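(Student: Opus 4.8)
The plan is to prove the closed form for the generating function of a sequence satisfying the Fibonacci-type recurrence with a shifted, specified initial condition. The statement asserts that if $a_i=0$ for $0\leq i<r$, $a_r=c$, $a_{r+1}=d$, and $a_i=a_{i-1}+a_{i-2}$ for $i\geq r+2$, then $\sum_{i\geq 0}a_ix^i=\frac{cx^r+(d-c)x^{r+1}}{1-x-x^2}$. The cleanest approach is the standard generating-function manipulation: let $A(x)=\sum_{i\geq 0}a_ix^i$, multiply the recurrence by $x^i$, and sum over the range where the recurrence holds, i.e. $i\geq r+2$.

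First I would write $A(x)=cx^r+dx^{r+1}+\sum_{i\geq r+2}a_ix^i$, using that all terms below $x^r$ vanish by hypothesis. Then I would substitute the recurrence into the tail sum to get
\[
\sum_{i\geq r+2}a_ix^i=\sum_{i\geq r+2}(a_{i-1}+a_{i-2})x^i=x\sum_{i\geq r+1}a_ix^i+x^2\sum_{i\geq r}a_ix^i.
\]
Next I would re-express each of these shifted sums in terms of $A(x)$ itself: since $a_i=0$ for $i<r$, we have $\sum_{i\geq r}a_ix^i=A(x)$ and $\sum_{i\geq r+1}a_ix^i=A(x)-cx^r$. Substituting these back yields
\[
A(x)-cx^r-dx^{r+1}=x\bigl(A(x)-cx^r\bigr)+x^2A(x).
\]

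Finally I would solve this linear equation for $A(x)$. Collecting the $A(x)$ terms gives $A(x)(1-x-x^2)=cx^r+dx^{r+1}-cx^{r+1}=cx^r+(d-c)x^{r+1}$, and dividing by $1-x-x^2$ produces the claimed formula. The only point requiring a little care is bookkeeping the index shifts so that the boundary terms $cx^r$ and $dx^{r+1}$ are accounted for exactly once; there is no genuine obstacle here, as this is a routine telescoping of a rational generating function, and convergence issues are immaterial since the identity is purely formal in $\mathbb{Z}[[x]]$.
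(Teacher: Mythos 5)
Your proof is correct and follows exactly the same route as the paper's: both write $A(x)$ as $cx^r + dx^{r+1}$ plus the tail sum, substitute the recurrence to express the tail as $x\bigl(A(x)-cx^r\bigr)+x^2A(x)$, and solve the resulting linear equation for $A(x)$. There is nothing to add or correct.
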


\begin{proof}
We follow a standard computation method regarding generating functions for recursively defined sequences. Let $A(x) = \sum_{i\geq0} a_ix^i$. Then, we have  \begin{align*}
A(x) = \sum_{i\geq0} a_ix^i = \sum_{i\geq r} a_ix^i = c x^r + d x^{r+1} + \sum_{i\geq r+2} a_ix^i &=  c x^r + d x^{r+1} + \sum_{i\geq r+2} a_{i-1}x^i + \sum_{i\geq r+2} a_{i-2}x^i\\
&=c x^r + d x^{r+1} + x(A(x) - cx^r) + x^2 A(x).
\end{align*}
Solving for $A(x)$ in the above equation, the result follows.
\end{proof}

Now, we turn our attention to tracking the sizes of Weyl alternation sets simultaneously. To prepare for this result, it is advantageous to consider the set \[P_r^i \coloneqq \{ w \in \mathcal{A}_r(\hroot, -\alpha_{i,r}): r \text{ is not in the influence of }w\}.\] 
In other words, if $r$ is not in the influence of $w\in\calA(\hroot,-\alpha_{i,r})$, then $s_r$ does not appear in a reduced expression for $w$.
We set $p_r^i \coloneqq \vert P_r^i \vert$.  
Our proof for the following result uses the same technique as in \cite[Proposition 4.6]{Lauren}.

\begin{lemma}\label{lem:p_recursion}
Given $r \geq i+4$, 
the sequence $p_r^i$ satisfies the recurrence  
\[
p_r^i = p_{r-1}^i + p_{r-2}^i + 3p_{r-3}^i + p_{r-4}^i.
\]
\end{lemma}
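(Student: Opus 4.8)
The plan is to prove the recurrence by analyzing how the set $P_r^i$ decomposes according to which basic allowable subwords from $\BAS_r(\hroot,-\alpha_{i,r})$ appear, using the characterization in \Cref{cor:BASir}. Recall that $P_r^i$ consists of exactly those $w \in \mathcal{A}_r(\hroot,-\alpha_{i,r})$ whose influence avoids $r$, equivalently whose reduced word omits $s_r$. By \Cref{thm:characterize_BAS}, every such $w$ is a unique commuting product over a pairwise independent subset of $\BAS_r(\hroot,-\alpha_{i,r})$; omitting $s_r$ from the influence means we only use those basic allowable subwords whose influence is contained in $\{i, i+1, \ldots, r-1\}$. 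The first step is to identify, from \Cref{cor:BASir}, precisely which elements of $\BAS_r$ touch the indices $r-1$ and $r-2$, since those are the ones whose presence or absence governs the recursion.

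The key idea, following the method of \cite[Proposition 4.6]{Lauren}, is to stratify $P_r^i$ by the ``highest'' behavior at index $r-1$. First I would isolate the elements of $P_r^i$ in which $s_{r-1}$ does not appear at all: these are in bijection with $P_{r-1}^i$, contributing $p_{r-1}^i$. The remaining elements contain $s_{r-1}$, and $s_{r-1}$ must occur inside one of the basic allowable subwords from \Cref{cor:BASir} whose influence includes $r-1$ but not $r$; from the list these are $s_{r-1}$ itself, the length-two words $s_{r-2}s_{r-1}$ and $s_{r-1}s_{r-2}$, and the length-three words $s_{r-2}s_{r-1}s_{r-2}$ and $s_{r-1}s_{r-2}\cdots$ of type (e) (one must check carefully which type-(e) word of the form $s_{k+2}s_ks_{k+1}$ lands with top index $r-1$). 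For each such ``capping'' block $b$, the rest of the element is an independent product of basic allowable subwords supported on $\{i,\ldots, r-1-\overline{I}(b)\}$, and this leftover factor is naturally a smaller $P$-set. The second step is to compute, for each capping block, exactly how far down the index range it forces the remaining factor to be supported, thereby matching it to $p_{r-2}^i$, $p_{r-3}^i$, or $p_{r-4}^i$.

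Carrying this out, one capping block ($s_{r-1}$, with extended influence reaching down to $r-2$) leaves a factor supported on $\{i,\ldots,r-3\}$, giving $p_{r-2}^i$; the blocks reaching down to $r-3$ (the two length-two words and the length-three word $s_{r-2}s_{r-1}s_{r-2}$, whose extended influences force the remaining factor onto $\{i,\ldots,r-4\}$) collectively give the coefficient $3$ on $p_{r-3}^i$; and the final block, reaching one further index down, yields $p_{r-4}^i$. The bookkeeping of extended influences and pairwise independence is what produces exactly the coefficients $1,1,3,1$, and the hypothesis $r \geq i+4$ guarantees that all the relevant blocks are genuinely present and that the smaller sets $P_{r-1}^i, \ldots, P_{r-4}^i$ are all in the stable range where \Cref{cor:BASir} applies uniformly (so no boundary degeneration at the lower end $i$ interferes).

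The main obstacle I anticipate is the careful verification that the decomposition is a genuine partition with these exact multiplicities --- in particular, confirming that each capping block's extended influence forces the correct support for the complementary factor, and that the bijections with the smaller $P$-sets are clean (no double-counting and no omissions). This requires precisely matching which of the five families in \Cref{cor:BASir} contribute a word with top index $r-1$, and checking that the three blocks contributing to the coefficient $3$ really do all push the remaining support down to $\{i,\ldots,r-4\}$ while the single length-one block $s_{r-1}$ only pushes it to $\{i,\ldots,r-3\}$. Once the extended-influence reach of each capping block is pinned down, the recurrence $p_r^i = p_{r-1}^i + p_{r-2}^i + 3p_{r-3}^i + p_{r-4}^i$ follows by summing the contributions, with \Cref{thm:characterize_BAS} ensuring the products are well-defined and uniquely determined.
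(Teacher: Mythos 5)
Your proposal is correct and takes essentially the same approach as the paper: the paper's proof also stratifies $P_r^i$ according to which basic allowable subword containing $s_{r-1}$ appears (none, $s_{r-1}$, $s_{r-2}s_{r-1}$, $s_{r-1}s_{r-2}$, $s_{r-2}s_{r-1}s_{r-2}$, or $s_{r-3}s_{r-1}s_{r-2}$), matching the six strata with $P_{r-1}^i$, $P_{r-2}^i$, three copies of $P_{r-3}^i$, and $P_{r-4}^i$ to get the coefficients $1,1,3,1$. The only cosmetic difference is direction: the paper packages the correspondence as an explicit bijection $\phi$ defined by right-multiplying elements of the smaller sets by the capping block, whereas you describe stripping the capping block off.
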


\begin{proof}
To establish the result, we provide a bijection \[\phi:P_{r-1}^i \cup P_{r-2}^i \cup Y_1 \cup Y_2 \cup Y_3 \cup P_{r-4}^i\to P_r^i\] where $Y_1$, $Y_2$, and $Y_3$ are distinct copies of $P_{r-3}^i$. 
First of all, from our description of each Weyl alternation set, 
we can find an isomorphic copy of $P_{r-1}^i$ in the 
set $P_{r}^i$. This inclusion is just the natural inclusion of the symmetric group on $n$ letters into the symmetric group on $n+1$ letters.
Thus, if $w \in P_{r-1}^i$, we set $\phi(w) = w$. 
The cokernel of this map is all $w \in P_r^i$ containing $s_{r-1}$ in their reduced expressions. 
More specifically, the cokernel consists of all $w \in P_r^i$ containing the subword $s_{r-1}$, $s_{r-2}s_{r-1}$, $s_{r-1}s_{r-2}$, $s_{r-2}s_{r-1}s_{r-2}$, or $s_{r-3}s_{r-1}s_{r-2}$. 

Given $w \in P_{r-2}^i$, we set $\phi(w) = w s_{r-1}$. Then $\phi(w) \in P_r^i$ for $w\in P_{r-2}^i$ and the image $\phi(P_{r-2}^i)$ constitutes all elements of $P_r^i$ containing the subword $s_{r-1}$. Next, if $w \in Y_1$, we set $\phi(w) = w s_{r-2}s_{r-1}$. Thus, $\phi(w) \in P_r^i$ for $w\in Y_1$ and the image $\phi(Y_1)$ constitutes all elements of $P_r^i$ containing the subword $s_{r-2}s_{r-1}$. Similarly, if $w \in Y_2$, we set $\phi(w) = w s_{r-1}s_{r-2}$, and if $w \in Y_3$, we set $\phi(w) = w s_{r-2}s_{r-1}s_{r-2}$. {So $\phi(w)\in P_r^i$ for $w\in Y_2$ and $Y_3$, the image $\phi(Y_2)$ constitutes all elements of $P_r^i$ containing the subword $s_{r-1}s_{r-2}$, and the image $\phi(Y_3)$ constitutes all elements of $P_r^i$ containing the subword $s_{r-2}s_{r-1}s_{r-2}$.} Finally, if $w \in P_{r-4}^i$, we set $\phi(w) = w s_{r-3}s_{r-1}s_{r-2}$. {Consequently, $\phi(w)\in P_r^i$ for $w\in P_{r-4}^i$ and the image $\phi(P_{r-4}^i)$ constitutes all elements of $P_r^i$ containing the subword $s_{r-3}s_{r-1}s_{r-2}$.} Thus, $\phi$ is indeed a bijection, which shows that the cardinalities of the sets satisfy the desired recurrence. 
\end{proof}

\begin{lemma}\label{lem:p_diag_recurrence}
If $3<i\leq r$, then 
the $p_r^i$ satisfy the recurrence  
\[
p_r^i = p_{r-1}^{i-1}+p_{r-2}^{i-2}.
\]
\end{lemma}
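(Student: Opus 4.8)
The plan is to reduce the count $p_r^i = |P_r^i|$ to a count of pairwise independent families of basic allowable subwords, and then to split on the behavior near the left end of the Dynkin diagram. By \Cref{thm:characterize_BAS}, the elements of $\calA_r(\hroot,-\alpha_{i,r})$ are in bijection with the pairwise independent subsets of $\BAS(\hroot,-\alpha_{i,r})$, which is listed explicitly in \Cref{cor:BASir}. Scanning that list, the simple reflection $s_r$ is the only basic allowable subword whose influence contains the index $r$, since families (b)--(e) all top out at index $r-1$. Consequently $P_r^i$ is in bijection with the pairwise independent subsets of $B_r^i := \BAS(\hroot,-\alpha_{i,r}) \setminus \{s_r\}$, and I would carry out the rest of the argument entirely in terms of these subsets.

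Because $i > 3$, the only element of $B_r^i$ whose influence contains the index $2$ is $s_2$: the families (b)--(e) all begin at index at least $i-1 \ge 3$, and the remaining simple reflections in family (a) are $s_3,\dots,s_{r-1}$. Thus every block other than $s_2$ has influence inside $\{3,\dots,r-1\}$. I would split the pairwise independent subsets of $B_r^i$ according to whether they contain $s_2$. Those that do not are precisely the pairwise independent subsets of $B_r^i\setminus\{s_2\}$; the index shift $k \mapsto k+1$, which is a Dynkin-diagram isomorphism onto the relevant subpath and hence preserves the independence relation of \Cref{def:independence}, carries $B_{r-1}^{i-1}$ bijectively onto $B_r^i\setminus\{s_2\}$. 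This is verified by matching the parameter ranges of families (a)--(e) in \Cref{cor:BASir} for $(r-1,i-1)$ against those for $(r,i)$. This case therefore contributes $p_{r-1}^{i-1}$.

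For a subset containing $s_2$, independence with $s_2$ forces every other block $b$ to satisfy $I(b)\cap\overline{I}(s_2)=I(b)\cap\{1,2,3\}=\emptyset$, i.e. $I(b)\subseteq\{4,\dots,r-1\}$; conversely, $s_2$ may be adjoined to any pairwise independent family of such blocks and independence is preserved. Now the shift $k\mapsto k+2$ carries $B_{r-2}^{i-2}$ bijectively onto the blocks of $B_r^i$ with influence contained in $\{4,\dots,r-1\}$ — again a routine range check (for instance, the lower bound $\max(i-1,4)$ for family (b) in $B_r^i$ matches $\max(i-3,2)$ for $(r-2,i-2)$ after shifting). Hence the subsets containing $s_2$ are in bijection with the pairwise independent subsets of $B_{r-2}^{i-2}$, contributing $p_{r-2}^{i-2}$.

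Adding the two disjoint cases yields $p_r^i = p_{r-1}^{i-1}+p_{r-2}^{i-2}$, as claimed. The hypothesis $i>3$ enters in exactly one place, but it is essential: it is what makes $s_2$ the unique basic allowable subword touching index $2$, since for $i=3$ the block $s_3s_2$ from family (b) also meets index $2$ and would spoil the dichotomy. I expect the only genuine bookkeeping — and the main place an error could hide — to be verifying that the two index shifts send each of the five families (a)--(e) onto the correct families with exactly the right parameter ranges; the conceptual inputs (the independent-subset description of $\calA$ from \Cref{thm:characterize_BAS}, invariance of independence under diagram automorphisms, and the explicit $\BAS$ lists of \Cref{cor:BASir}) are all already available.
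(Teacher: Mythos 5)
Your proposal is correct and takes essentially the same approach as the paper: both arguments identify $P_r^i$ with the pairwise independent subsets of $\BAS_r(\hroot,-\alpha_{i,r})\setminus\{s_r\}$, split according to whether $s_2$ occurs, and use the index shifts $k\mapsto k+1$ and $k\mapsto k+2$ (legitimate exactly because $i>3$ forces $\max(i-1,2)=i-1$ and $\max(i-2,2)=i-2$, so $s_2$ is the only block meeting index $2$) to obtain the counts $p_{r-1}^{i-1}$ and $p_{r-2}^{i-2}$. The paper phrases the dichotomy in terms of reduced expressions containing $s_2$ rather than independent subsets containing $s_2$, but via \Cref{thm:characterize_BAS} these are the same thing, so the two proofs coincide.
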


\begin{proof}
Since $i > 3$, we know $\max(i-1,2) = i-1$ and $\max(i-2,2) = i-2$. Therefore, if we take the description of $\BAS_{r-1}(\hroot,\alpha_{i-1,r-1})\backslash\{s_{r-1}\}$ and add one to each index, we have $\BAS_{r}(\hroot,\alpha_{i-1,r-1})\backslash\{s_2,s_r\}$. 
By the shift in indices, $p_{r-1}^{i-1}$ enumerates the elements of $P_r^i$ 
that do not contain $s_2$ in their reduced expressions.

It remains to enumerate the elements of $P_r^i$ that contain  $s_2$ in their reduced expressions.
Since $i \geq 4$, when we remove the word $s_2$, the resulting Weyl group element is a pairwise independent product of  \begin{enumerate}
\noindent \begin{minipage}{0.45\textwidth}
    \item[(a)] $s_k$ with $3 < k < r$,
    \item[(b)] $s_{k+1}s_k$ with $\max(i-1,4) \leq k \leq r-2$,
    \item[(c)] $s_ks_{k+1}$ with $i \leq k \leq r-2$,
\end{minipage}
\begin{minipage}{0.45\textwidth}
    \item[(d)] $s_ks_{k+1}s_k$ with $i \leq k \leq r-2$, and 
    \item[(e)] $s_{k+2}s_ks_{k+1}$ with $i \leq k \leq r-3$.
\end{minipage}
\end{enumerate}
Note, if we subtract 2 from all indices, we have a description of $\BAS_{r-2}(\hroot,-\alpha_{i-2,r-2})\backslash\{s_{r-2}\}$. This means that the number of elements of $P_r^i$  that contain  $s_2$ in their reduced expressions is precisely $p_{r-2}^{i-2}$, completing the proof.
\end{proof}

\begin{lemma}
    If $1\leq i< r$, then $\vert\calA_r(\hroot,-\alpha_{i,r-1})\vert=p_r^i$.
\end{lemma}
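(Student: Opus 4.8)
The plan is to translate both sides of the claimed equality into statements about pairwise independent subsets of basic allowable subwords via \Cref{thm:characterize_BAS}, and then to check that the two relevant collections of building blocks coincide.

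First I would record how influence interacts with the decomposition in \Cref{thm:characterize_BAS}. If $w\in\calA_r(\hroot,-\alpha_{i,r})$ is the product of a pairwise independent set $B\subseteq\BAS_r(\hroot,-\alpha_{i,r})$, then (as in the proof of \Cref{lem:properties_of_influence}) the concatenation of the reduced words of the commuting factors is itself reduced, so by \Cref{lem:properties_of_influence} we have $I(w)=\bigcup_{b\in B}I(b)$. In particular, $r\in I(w)$ if and only if some factor $b\in B$ has $r\in I(b)$, so $w\in P_r^i$ exactly when no factor of $B$ uses $s_r$.

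Next I would identify which basic allowable subwords use $s_r$. Reading off the explicit description of $\BAS_r(\hroot,-\alpha_{i,r})$---from \Cref{cor:BASir} when $i>1$, and from \Cref{thm:BAS_for_negAlpha} when $i=1$ (recall $-\alpha_{1,r}=-\hroot$)---the families (b)--(e) all have largest index at most $r-1$, so the only element whose influence contains $r$ is the simple reflection $s_r$ in family (a). Consequently, via \Cref{thm:characterize_BAS}, $P_r^i$ is in bijection with the pairwise independent subsets of $\BAS_r(\hroot,-\alpha_{i,r})\setminus\{s_r\}$. The crux of the argument is then the set equality
\[
\BAS_r(\hroot,-\alpha_{i,r})\setminus\{s_r\}=\BAS_r(\hroot,-\alpha_{i,r-1}),
\]
which I would verify by lining up the two descriptions family by family. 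The right-hand side comes from \Cref{cor:BAS1j} when $i=1$ and from \Cref{cor:BASij} when $i>1$, taking $j=r-1$ so that $\min(j,r-2)=r-2$, $j-1=r-2$, and $j-2=r-3$; checking families (a)--(e) shows each index range matches exactly that of $\BAS_r(\hroot,-\alpha_{i,r})$ after deleting $s_r$ from family (a). Alternatively, one could invoke \Cref{prop:MovingToLargerWeights} with $\nu=-\alpha_{i,r}\leq-\alpha_{i,r-1}=\mu$ (since $\mu-\nu=\alpha_r$), reducing the task to showing that $b\in\calA_r(\hroot,-\alpha_{i,r-1})$ iff $b\neq s_r$ for $b\in\BAS_r(\hroot,-\alpha_{i,r})$.

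Finally, because pairwise independence is a property of the influences of Weyl group elements and does not depend on the weight, the pairwise independent subsets of $\BAS_r(\hroot,-\alpha_{i,r})\setminus\{s_r\}$ are precisely the pairwise independent subsets of $\BAS_r(\hroot,-\alpha_{i,r-1})$, which by \Cref{thm:characterize_BAS} biject with $\calA_r(\hroot,-\alpha_{i,r-1})$. Composing the two bijections yields $p_r^i=\vert P_r^i\vert=\vert\calA_r(\hroot,-\alpha_{i,r-1})\vert$. I expect the main obstacle to be bookkeeping rather than conceptual: correctly handling the boundary values of the index ranges (the $\min$/$\max$ truncations), treating the $i=1$ case separately since there $\BAS_r(\hroot,-\alpha_{i,r})$ is read from \Cref{thm:BAS_for_negAlpha} rather than \Cref{cor:BASir}, and confirming the small-$r$ edge cases (such as $i=r-1$) cause no range to behave unexpectedly.
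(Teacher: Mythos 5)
Your proof is correct and takes essentially the same route as the paper: both arguments rest on the set equality $\BAS_r(\hroot,-\alpha_{i,r-1})=\BAS_r(\hroot,-\alpha_{i,r})\setminus\{s_r\}$, verified by comparing \Cref{thm:BAS_for_negAlpha} or \Cref{cor:BASir} against \Cref{cor:BAS1j} or \Cref{cor:BASij} at $j=r-1$, together with the observation that $s_r$ is the only basic allowable subword whose influence contains $r$. Your write-up simply makes explicit the bijection machinery of \Cref{thm:characterize_BAS} and the influence-union fact that the paper's terser proof leaves implicit.
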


\begin{proof}
    Let $i=1$. Then, comparing the inequalities in \Cref{thm:BAS_for_negAlpha} to those in \Cref{cor:BAS1j} with $j=r-1$, we have that \[\BAS_r(\hroot,-\alpha_{1,r-1})=\BAS_r(\hroot,{-}\alpha_{1,r})\setminus\{s_r\}.\] Similarly, for $1<i<r$, by comparing the inequalities in \Cref{cor:BASir} to those in \Cref{cor:BASij} with $j=r-1$, we have also that \[\BAS_r(\hroot,-\alpha_{i,r-1})=\BAS_r(\hroot,{-}\alpha_{i,r})\setminus\{s_r\}.\]
    The result then follows from $p_r^i$ being the number of  elements in $\calA(\hroot,-\alpha_{i,r})$ without $s_r$ in their reduced expressions and the fact that the only basic allowable subword containing $s_r$ is the element $s_r$ itself.
\end{proof}

To prepare for our next results, we set $h_r^i = \vert \mathcal{A}_r(\hroot, -\alpha_{i,r})\vert$. Next, we show that the quantities $h_r^i$ and $p_r^i$ are closely related.

\begin{lemma}\label{lem:p_is_i_to_r-1}
If $r\geq 2$ and $1\leq i \leq r-1$, then $h_r^i = p_r^i + p_{r-1}^i$.
\end{lemma}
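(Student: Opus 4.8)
The plan is to partition $\mathcal{A}_r(\hroot,-\alpha_{i,r})$ according to whether the index $r$ lies in the influence of an element, and to identify the two resulting blocks with the two terms on the right-hand side. By the very definition of $P_r^i$, the block of elements whose influence omits $r$ is exactly $P_r^i$, contributing $p_r^i$. Hence the entire content of the lemma reduces to showing that the number of elements of $\mathcal{A}_r(\hroot,-\alpha_{i,r})$ whose influence contains $r$ equals $p_{r-1}^i$.

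To count those elements, I would invoke the bijection of \Cref{thm:characterize_BAS}(2) between elements of $\mathcal{A}_r(\hroot,-\alpha_{i,r})$ and pairwise independent subsets of $\BAS_r(\hroot,-\alpha_{i,r})$. Inspecting \Cref{cor:BASir}, every basic allowable subword other than $s_r$ itself has all of its indices in $\{1,\dots,r-1\}$, so $s_r$ is the unique basic allowable subword whose influence meets $\{r\}$. Consequently, by \Cref{lem:properties_of_influence}(3), an element has $r$ in its influence if and only if its corresponding pairwise independent subset contains $s_r$. Since $\overline{I}(s_r)=\{r-1,r\}$, independence forces every other member of such a subset to avoid both indices $r-1$ and $r$. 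Thus the elements of $\mathcal{A}_r(\hroot,-\alpha_{i,r})$ containing $s_r$ are in bijection (via deletion of $s_r$, equivalently the map $w\mapsto s_rw$ on group elements) with the pairwise independent subsets of \[B_r:=\{b\in\BAS_r(\hroot,-\alpha_{i,r}): I(b)\cap\{r-1,r\}=\emptyset\}.\]

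The final step is to match $B_r$ with the basic allowable subwords governing $P_{r-1}^i$. By the same reasoning applied at rank $r-1$, the set $P_{r-1}^i$ is in bijection with the pairwise independent subsets of \[B_{r-1}':=\{b\in\BAS_{r-1}(\hroot,-\alpha_{i,r-1}): r-1\notin I(b)\}.\] I would then establish $B_r=B_{r-1}'$ by writing out both sets directly from \Cref{cor:BASir}: once at rank $r$ with the additional constraint of avoiding $r-1$, and once at rank $r-1$ with the constraint of avoiding $r-1$, checking that the index ranges coincide family by family. For instance, forbidding $r-1$ sharpens the bound $k\le r-2$ in families (b), (c), (d) to $k\le r-3$, and sharpens $k\le r-3$ in family (e) to $k\le r-4$, which are precisely the rank-$(r-1)$ ranges; families (a) likewise agree. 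Because independence depends only on influences, which are unchanged, the pairwise independent subsets of $B_r$ and of $B_{r-1}'$ are counted identically, so there are $p_{r-1}^i$ elements containing $s_r$, and therefore $h_r^i=p_r^i+p_{r-1}^i$. The main obstacle is purely this bookkeeping of the $\max(i-1,2)$-indexed ranges, together with confirming the small-rank boundary cases (for example $r=2$, where several families are empty but the identity $B_r=B_{r-1}'$ remains valid).
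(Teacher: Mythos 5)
Your proof is correct and follows essentially the same route as the paper's: both partition $\mathcal{A}_r(\hroot,-\alpha_{i,r})$ according to whether $s_r$ appears, identify the $s_r$-free block with $P_r^i$, and biject the $s_r$-containing block with $P_{r-1}^i$ by deleting (equivalently, multiplying by) $s_r$. The only difference is one of detail: where the paper writes ``one can check'' that $w\mapsto ws_r$ lands in, and surjects onto, the $s_r$-containing elements, you carry out that verification explicitly via the bijection of \Cref{thm:characterize_BAS}(2) and the index-range bookkeeping in \Cref{thm:BAS_for_negAlpha} and \Cref{cor:BASir}.
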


\begin{proof}
To establish the result, we provide a bijection between $P_r^i \cup P_{r-1}^i$ and $\mathcal{A}_r(\hroot, -\alpha_{i,r})$. Given $w \in P_r^i$, we set $\phi(w) = w$. By definition, the image $\phi(P_r^i)$ consists of all $w \in \mathcal{A}_r(\hroot, -\alpha_{i,r})$ without a subword $s_r$, of which there are $p_r^i$. Given $w \in P_{r-1}^i$, we set $\phi(w) = w s_r$. One can check that $w s_r \in \mathcal{A}_r(\hroot, -\alpha_{i,r})$, and moreover that all $\tau \in \mathcal{A}_r(\hroot, -\alpha_{i,r})$ with a subword $s_r$ can be written this way. Recall that there are $p_{r-1}^i$ such elements of $\mathcal{A}_r(\hroot, -\alpha_{i,r})$. As these subsets of $\mathcal{A}_r(\hroot, -\alpha_{i,r})$ are disjoint, $\phi$ defines the desired bijection, and we get $h_r^i = p_r^i + p_{r-1}^i$.
\end{proof}

Now that we know recurrences for the sequence $\{p_r^i\}_r$ and a relationship between the sequences $\{p_r^i\}_r$ and $\{h_r^i\}_r$, we can conclude that the two sequences satisfy the same recurrences.

\begin{corollary}\label{cor:h_recursion}
If $r \geq i+4$, then \[h_r^i = h_{r-1}^i + h_{r-2}^i + 3h_{r-3}^i + h_{r-4}^i.
\] If $i,r\geq 3$, 
then \[h^i_r=h^{i-1}_{r-1}+h^{i-2}_{r-2}.\]
\end{corollary}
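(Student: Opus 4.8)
The plan is to obtain both recurrences for $h_r^i$ as purely formal consequences of the recurrences already proved for $p_r^i$, using \Cref{lem:p_is_i_to_r-1} as the bridge. That lemma identifies $h_r^i = p_r^i + p_{r-1}^i$, so every assertion about the sequence $\{h_r^i\}_r$ is equivalent to an assertion about a fixed linear combination of consecutive terms of $\{p_r^i\}_r$. The guiding principle is that a fixed linear combination of shifts of a sequence satisfying a constant-coefficient linear recurrence satisfies that same recurrence, so no new combinatorics are needed.

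First I would establish the four-term recurrence. Substituting $h_s^i = p_s^i + p_{s-1}^i$ into the right-hand side gives $h_{r-1}^i + h_{r-2}^i + 3h_{r-3}^i + h_{r-4}^i = p_{r-1}^i + 2p_{r-2}^i + 4p_{r-3}^i + 4p_{r-4}^i + p_{r-5}^i$, while the left-hand side is $h_r^i = p_r^i + p_{r-1}^i$. Thus the claim reduces to the single $p$-identity $p_r^i = 2p_{r-2}^i + 4p_{r-3}^i + 4p_{r-4}^i + p_{r-5}^i$, which in turn follows from applying \Cref{lem:p_recursion} to $p_r^i$ and then again to the resulting $p_{r-1}^i$ term. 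A cleaner packaging, which I would prefer to present, is operator-theoretic: writing $S$ for the shift in $r$, \Cref{lem:p_recursion} says the operator $S^4 - S^3 - S^2 - 3S - 1$ annihilates $\{p_r^i\}_r$, and $h = (1 + S^{-1})p$; since $1 + S^{-1}$ is a polynomial in the shift it commutes with the recurrence operator, so $h$ is annihilated by it as well.

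For the diagonal recurrence $h_r^i = h_{r-1}^{i-1} + h_{r-2}^{i-2}$ I would proceed identically, but using \Cref{lem:p_diag_recurrence} in place of \Cref{lem:p_recursion}. Starting from $h_r^i = p_r^i + p_{r-1}^i$ and applying $p_s^i = p_{s-1}^{i-1} + p_{s-2}^{i-2}$ to each summand yields $h_r^i = p_{r-1}^{i-1} + p_{r-2}^{i-2} + p_{r-2}^{i-1} + p_{r-3}^{i-2}$. On the other side, \Cref{lem:p_is_i_to_r-1} gives $h_{r-1}^{i-1} = p_{r-1}^{i-1} + p_{r-2}^{i-1}$ and $h_{r-2}^{i-2} = p_{r-2}^{i-2} + p_{r-3}^{i-2}$, and summing these matches the previous expression term for term.

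The main obstacle is not the algebra but the bookkeeping of index ranges needed to invoke the $p$-recurrences legitimately. For the four-term recurrence, expanding $p_{r-1}^i$ via \Cref{lem:p_recursion} requires $r-1 \geq i+4$, which is strictly stronger than the stated hypothesis $r \geq i+4$; consequently the boundary case $r = i+4$ is not covered by the generic substitution and must be verified directly (the operator viewpoint makes the range requirement transparent but does not eliminate this genuine endpoint). Likewise, \Cref{lem:p_diag_recurrence} requires $i > 3$, so the value $i = 3$ allowed by the corollary's hypothesis $i, r \geq 3$ sits at the edge of applicability and would need a separate hand check, most naturally by reducing it to a previously computed small case. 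I would therefore devote the bulk of the writeup to confirming that every index appearing in each substitution lies in the valid range, and to dispatching these boundary values explicitly.
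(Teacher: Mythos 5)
Your proposal follows exactly the route the paper takes: the paper's entire proof of this corollary is the single sentence preceding it, asserting that since $h_r^i = p_r^i + p_{r-1}^i$ (\Cref{lem:p_is_i_to_r-1}) and the $p_r^i$ satisfy the recurrences of \Cref{lem:p_recursion} and \Cref{lem:p_diag_recurrence}, the $h_r^i$ must satisfy them too. Your substitution computations make this precise, and they are correct throughout the range where every invoked lemma actually applies.

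The one place your plan falls short is in how you propose to handle the boundary cases you (correctly) flag: these are not endpoint values that can be ``verified directly'' or ``reduced to a previously computed small case,'' because each is an infinite family of identities. For the four-term recurrence at $r = i+4$, your reduction invokes \Cref{lem:p_recursion} at $r-1 = i+3$ and involves $p_{r-5}^i = p_{i-1}^i$, a quantity the paper never defines; after cancelling what is legitimately available, the claim becomes $p_{i+3}^i = p_{i+2}^i + p_{i+1}^i + 2p_i^i + h_i^i$, one identity for \emph{every} $i$, and it cannot be obtained by pretending the $p$-recurrence extends to $r = i+3$ with the convention $p_{i-1}^i = 0$: by \Cref{tab:initial_values_p_h}, for $i=2$ that would give $p_5^2 = 6+2+3+0 = 11$, whereas $p_5^2 = 12$. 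Likewise the diagonal recurrence at $i=3$ is a claim for every $r \geq 3$, and you missed a second uncovered case there, $r = i$, where \Cref{lem:p_is_i_to_r-1} does not apply to $h_r^i$ at all (it requires $i \leq r-1$); that case is the Fibonacci identity in disguise, since $h_i^i = F_{i+1}$ by \Cref{prop:EnumerateNegativeSimple}. All of these boundary statements are true, but closing them requires genuine extra arguments (or strengthened ranges in the two $p$-lemmas), not hand checks. In fairness, the paper's own one-sentence proof silently skips precisely the same cases, so your writeup, which at least names them, is the more honest account of what a complete proof demands.
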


By computing initial conditions using \Cref{thm:characterize_BAS} and \Cref{cor:BASir}, we provide generating functions for both of these sequences.

\begin{lemma}\label{lem:GenFxnPiAndHi}
Define $\calP^i(x) = \displaystyle\sum_{r \geq i} p_r^i x^r$ and $\calH^i(x) = \displaystyle\sum_{r \geq i} h_r^i x^r$.

\begin{enumerate}
    \item If $i = 1$, then \[
\calP^1(x) = \frac{x + x^2}{1 - x - x^2 - 3x^3 - x^4} \qquad
\mbox{and} \qquad 
\calH^1(x) = \frac{x + 2x^2 + x^3}{1 - x - x^2 - 3x^3 - x^4}
.\]
\item If $i = 2$, then 
\[
\calP^2(x) = \frac{x^2 + x^{3} + 3x^{4} + x^{5}}{1 - x - x^2 - 3x^3 - x^4} 
\qquad 
\mbox{and}
\qquad 
\calH^2(x) = \frac{2x^3 + x^{4} + 3x^{5} + x^{6}}{1 - x - x^2 - 3x^3 - x^4}
.\]
\item If $i=3$, then \[
\calP^3(x) = \frac{2x^3 + 2x^{4} + 3x^{5} + x^{6}}{1-x-x^2 - 3x^3 - x^4} 
\qquad 
\mbox{and}
\qquad \calH^3(x) = \frac{3x^3 + 3x^{4} + 4 x^{5} + x^{6}}{1-x-x^2 - 3x^3 - x^4}
.\]
\end{enumerate}

Furthermore, defining $\calP(x,s)=\sum_{i\geq1} \calP^i(x)s^i$ and $\calH(x,s)=\sum_{i\geq1}\calH^i(x)s^i$, we have \begin{align*}
\calP(x,s)&=\frac{xs(x^4s+3x^3s+x+1)}{(1-x-x^2-3x^3-x^4)(1-xs-(xs)^2)}\\
\intertext{and}
\calH(x,s)&=\frac{xs(x^5s+3x^4s-xs+x^2+2x+1)}{(1-x-x^2-3x^3-x^4)(1-xs-(xs)^2)}.\\
\end{align*}
\end{lemma}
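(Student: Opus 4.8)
The plan is to handle the three single-variable cases first and then assemble them into the bivariate formulas. For a fixed $i\in\{1,2,3\}$, \Cref{lem:p_recursion} guarantees that the sequence $(p_r^i)_r$ eventually satisfies $p_r^i=p_{r-1}^i+p_{r-2}^i+3p_{r-3}^i+p_{r-4}^i$, and this four-term recurrence is exactly what the denominator $1-x-x^2-3x^3-x^4$ encodes. Concretely, I would multiply the series $\calP^i(x)=\sum_{r\ge i}p_r^i x^r$ by this denominator; every term with $r$ above the threshold in \Cref{lem:p_recursion} cancels, leaving a polynomial whose coefficients are the finitely many initial values $p_i^i,p_{i+1}^i,p_{i+2}^i,p_{i+3}^i$. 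I would compute these initial values directly, enumerating the pairwise independent subsets of the explicit generating sets $\BAS_r(\hroot,-\alpha_{i,r})$ from \Cref{cor:BASir} (and \Cref{cor:BAS1j} when $i=1$), using \Cref{thm:characterize_BAS} to match independent subsets with elements of the alternation set. Reading off the resulting numerators produces the stated $\calP^i(x)$.

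To pass from $\calP^i$ to $\calH^i$, I would invoke \Cref{lem:p_is_i_to_r-1}, which gives $h_r^i=p_r^i+p_{r-1}^i$ for $r\ge i+1$. Summing against $x^r$ yields $\calH^i(x)=(1+x)\calP^i(x)$ together with a single correction term arising at $r=i$, where \Cref{lem:p_is_i_to_r-1} does not apply; that term is pinned down by computing the base value $h_i^i=|\calA_i(\hroot,-\alpha_i)|$ directly. As a cross-check, since \Cref{cor:h_recursion} shows that $(h_r^i)_r$ obeys the very same four-term recurrence as $(p_r^i)_r$, one can instead derive each $\calH^i(x)$ from scratch exactly as for $\calP^i$ and confirm the two computations agree.

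For the bivariate functions I would exploit the diagonal recurrences. \Cref{lem:p_diag_recurrence} gives $p_r^i=p_{r-1}^{i-1}+p_{r-2}^{i-2}$, which at the level of series reads $\calP^i(x)=x\,\calP^{i-1}(x)+x^2\,\calP^{i-2}(x)$ for $i$ large, and \Cref{cor:h_recursion} supplies the analogous identity for $\calH^i$. Writing $\calP(x,s)=\sum_{i\ge1}\calP^i(x)s^i$ and summing the recurrence against $s^i$ telescopes it into the factor $1-xs-(xs)^2$ in the denominator (the weighted Fibonacci structure in the index $i$, in the spirit of \Cref{lem:FibWIthOtherStartingPOints}); the surviving numerator is then determined entirely by the base series $\calP^1(x)$ and $\calP^2(x)$ already computed. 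Clearing the common factor $1-x-x^2-3x^3-x^4$ yields the claimed closed form for $\calP(x,s)$, and the identical computation starting from $\calH^1,\calH^2$ yields $\calH(x,s)$.

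The main obstacle is boundary bookkeeping. Both \Cref{lem:p_recursion} and \Cref{lem:p_diag_recurrence} are valid only once $r$ and $i$ exceed small thresholds, so each summation over the recurrence contributes correction terms from the low cases $i\in\{1,2,3\}$ and $r=i$. Ensuring that the numerators match the stated polynomials \emph{exactly}—rather than being off by a stray power of $x$ or a low-degree term—depends on carrying these boundary contributions through correctly. This is precisely why the explicit initial-value computations of $p_r^i$ and $h_r^i$ must be performed carefully and then reconciled with the telescoped series.
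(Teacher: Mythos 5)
Your proposal matches the paper's proof on two of its three components. For $\calP^1,\calP^2,\calP^3$ the paper argues exactly as you do: the four-term recurrence of \Cref{lem:p_recursion} forces each series to have the form $(ax^i+bx^{i+1}+cx^{i+2}+dx^{i+3})/(1-x-x^2-3x^3-x^4)$, and the coefficients are pinned down by the four initial values (recorded in \Cref{tab:initial_values_p_h}), which the paper also computes from the explicit basic allowable subwords. For the bivariate series the paper performs precisely your telescoping of the diagonal recurrences, arriving at $\calP(x,s)=\bigl((1-xs)\calP^1(x)s+\calP^2(x)s^2\bigr)/\bigl(1-xs-(xs)^2\bigr)$ and the analogue for $\calH$. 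One wrinkle you share with the paper: \Cref{lem:p_diag_recurrence} is stated only for $3<i\leq r$, so using $\calP^1,\calP^2$ alone as base series strictly requires also checking the coefficient of $s^3$, i.e., that $\calP^3=x\calP^2+x^2\calP^1$; this is immediate from the explicit formulas, but your phrase ``for $i$ large'' leaves it implicit, as does the paper.

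Where you genuinely differ is the treatment of $\calH^i$: the paper recomputes each $\calH^i$ from scratch via \Cref{cor:h_recursion} and the four initial values $h^i_i,\ldots,h^i_{i+3}$, whereas you derive $\calH^i(x)=(1+x)\calP^i(x)+(h^i_i-p^i_i)x^i$ from \Cref{lem:p_is_i_to_r-1}, needing only the single new value $h^i_i$. Your route is valid and more economical, and for $i=1,3$ it reproduces the stated formulas. But be aware of what it yields for $i=2$: with $h^2_2=2$ and $p^2_2=1$ (the paper's own table values), you get
\[
\calH^2(x)=\frac{(1+x)\left(x^2+x^3+3x^4+x^5\right)+x^2\left(1-x-x^2-3x^3-x^4\right)}{1-x-x^2-3x^3-x^4}=\frac{2x^2+x^3+3x^4+x^5}{1-x-x^2-3x^3-x^4},
\]
which is the lemma's stated numerator divided by $x$. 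The evidence favors your formula: the stated $\calH^2$ has vanishing $x^2$-coefficient, contradicting $h^2_2=|\calA_2(\hroot,-\alpha_2)|=|\{1,s_2\}|=2$ from the paper's table, and it is incompatible with the stated $\calH^3$ under the diagonal recurrence $\calH^3=x\calH^2+x^2\calH^1$, which your $\calH^2$ satisfies exactly. So your proof, carried out faithfully, establishes a corrected version of the lemma in which $\calH^2$---and consequently the closed form for $\calH(x,s)$, since it is assembled from $\calH^1$ and $\calH^2$---differs from what is printed. This is not a gap in your argument; it is an apparent error (an extra factor of $x$) in the statement that your alternative derivation exposes, and which the paper's own derivation, by solving for the numerator coefficients from its table, should also have produced differently.
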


\begin{proof}
For the cases $i=1,~2$, and $3$, reasoning as in the proof of \Cref{lem:FibWIthOtherStartingPOints},  the recursions in \Cref{lem:p_recursion} and \Cref{cor:h_recursion} guarantee these generating functions must have the form \[\frac{ax^i+bx^{i+1}+cx^{i+2}+dx^{i+3}}{1-x-x^2-3x^3-x^4}=ax^i+(a+b)x^{i+1}+(2a+b+c)x^{i+2}+(6a+2b+c+d)x^{i+3}+\cdots\] with coefficients $a,b,c$, and $d$ depending on the initial conditions. 
The formulas for these cases are then obtained by first computing the values of $p^i_r$ and $h^i_r$ for the first four values as shown in \Cref{tab:initial_values_p_h}, then solving for the coefficients $a,b,c$, and $d$. 

\begin{table}[]
    \centering
    \begin{tabular}{c||c|c|c|c||c|c|c|c}
        $i$ & $p^i_{i}$ & $p^i_{i+1}$ & $p^i_{i+2}$ & $p^i_{i+3}$ & $h^i_{i}$ & $h^i_{i+1}$ & $h^i_{i+2}$ & $h^i_{i+3}$ \\ \hline
         1 & 1 & 2 & 3 & 8 & 1 & 3 & 5 & 11\\
         2 & 1 & 2 & 6 & 12 & 2 & 3 & 8 & 18\\
         3 & 2 & 4 & 9 & 20 & 3 & 6 & 13 & 29
    \end{tabular}
    \caption{The values of $p^i_r$ and $h^i_r$ for $i\in\{1,2,3\}$ and $i\leq r\leq i+2$.}
    \label{tab:initial_values_p_h}
\end{table}

We now apply the recursion $p^i_r=p^{i-1}_{r-1}+p^{i-2}_{r-2}$ from \Cref{lem:p_diag_recurrence} for $r\geq i\geq 3$ to obtain a formula for $\calP(x,s)$. 
For $i\geq 3$, we first compute\begin{align*}
    \calP^i(x)&=\sum_{r\geq i}p_r^i x^r\\
    &=\sum_{r\geq i} (p_{r-1}^{i-1}+p_{r-2}^{i-2})x^r\\
    &=\sum_{r\geq i}p_{r-1}^{i-1}x^r+\sum_{r\geq i}p_{r-2}^{i-2}x^r\\
    &=x\sum_{r\geq i-1}p_r^{i-1}x^r+x^2\sum_{r\geq i-2}p_r^{i-2}x^r\\
    &=x\calP^{i-1}(x)+x^2\calP^{i-2}(x).
\end{align*}

With this recursion in hand, we compute \begin{align*}
    \calP(x,s)&=\sum_{i\geq 1}\calP^i(x)s^i\\
    &=\calP^1(x)s+\calP^2(x)s^2+\sum_{i\geq 3} \calP^i(x)s^i\\
    &=\calP^1(x)s+\calP^2(x)s^2+\sum_{i\geq3}x\calP^{i-1}(x)s^i+\sum_{i\geq3}x^2\calP^{i-2}(x)s^i\\
    &=\calP^1(x)s+\calP^2(x)s^2+xs(\calP(x,s)-\calP^1(x)s)+(xs)^2\calP(x,s).
\end{align*}

Now, note $ (1-xs-(xs)^2)\calP(x,s)=(1-xs)\calP^1(x)s+\calP^2(x)s^2$, and 
\begin{align*}
    \calP(x,s)&=\frac{(1-xs)\calP^1(x)s+\calP^2(x)s^2}{1-xs-(xs)^2}\\
    &=\frac{(1-xs)(x+x^2)s+(x^2+x^3+3x^4+x^5)s^2}{(1-x-x^2-3x^3-x^4)(1-xs-(xs)^2)}\\
    &=\frac{xs(x^4s+3x^3s+x+1)}{(1-x-x^2-3x^3-x^4)(1-xs-(xs)^2)}.
\end{align*}

We handle $\calH(x,s)$ similarly, noting that by \Cref{cor:h_recursion}, we also have that \[\calH^i(x)=x\calH^{i-1}(x)+x^2\calH^{i-2}(x)\] for $i\geq3$. We follow the above computation until \begin{align*}
    \calH(x,s)&=\frac{(1-xs)\calH^1(x)s+\calH^2(x)s^2}{(1-xs-(xs)^2)}\intertext{where we insert the initial conditions for $\calH^i(x)$ to obtain}
    \calH(x,s)&=\frac{(1-xs)(x+2x^2+x^3)s+(2x^3+x^4+3x^5+x^6)s^2}{(1-x-x^2-3x^3-x^4)(1-xs-(xs)^2)}\\
    &=\frac{xs(x^5s+3x^4s-xs+x^2+2x+1)}{(1-x-x^2-3x^3-x^4)(1-xs-(xs)^2)}.\qedhere
\end{align*}
\end{proof}

We remark here that the sequence $\{h_r^1\}$ coincides with \cite[\seqnum{A196423}]{OEIS}. This sequence also counts the cardinality of the following family of sets. Let $X_r$ denote the set of length $r$ sequences $x_1,x_2,\ldots,x_r$ where $x_i \in \{0,1,2\}$ and $x_i = \vert \{x_j : \vert j - i \vert = 1 \text{ and } x_j < x_i\}\vert$. 
For example, $X_3 = \{000, 001, 100, 101,020\}$. We show there is a relationship between $X_r$ and $\mathcal{A}_r(\hroot, -\hroot)$.

\begin{theorem} 
There is a bijection between  $X_r$ and $\mathcal{A}_r(\hroot,-\hroot)$. 
\end{theorem}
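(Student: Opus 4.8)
The plan is to use the explicit description of $\mathcal{A}_r(\hroot,-\hroot)$ through its basic allowable subwords and to match these ``building blocks'' with local patterns in the sequences of $X_r$. By \Cref{thm:characterize_BAS} and \Cref{thm:BAS_for_negAlpha}, every $\sigma\in\mathcal{A}_r(\hroot,-\hroot)$ is uniquely a product of a pairwise independent subset of $\BAS(\hroot,-\hroot)$, whose elements are the single reflections $s_k$ (support $\{k\}$, $1\le k\le r$), the three length-two blocks $s_ks_{k+1}$, $s_{k+1}s_k$, $s_ks_{k+1}s_k$ (support $\{k,k+1\}$, $2\le k\le r-2$), and the length-three block $s_{k+2}s_ks_{k+1}$ (support $\{k,k+1,k+2\}$, $2\le k\le r-3$). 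Recall that pairwise independence in type $A$ means the supports of distinct factors are separated by at least one unused index.

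First I would analyze the combinatorial structure of $X_r$ by decomposing a sequence $x=(x_1,\ldots,x_r)$ into its maximal runs of nonzero entries, which are separated by $0$'s. The key local observation is that a position strictly interior to a run has both neighbors nonzero, so it cannot equal $1$ (a $1$ requires a neighbor equal to $0$) and hence must equal $2$; but a $2$ forces both of its neighbors to be less than $2$. Combining these forces every run to have length at most $3$, and a short case analysis, keeping careful track of runs that touch position $1$ or $r$, shows the complete list of admissible runs: an interior run of length $1$ must be $(2)$; an interior run of length $2$ must be one of $(1,1)$, $(1,2)$, $(2,1)$; an interior run of length $3$ must be $(1,2,1)$; and a run meeting an endpoint of the sequence must be the single entry $(1)$.

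Then I would define a map $\Phi\colon\mathcal{A}_r(\hroot,-\hroot)\to X_r$ by placing, on the support of each block in the decomposition of $\sigma$, a fixed local pattern and filling the complementary indices with $0$'s: the reflection $s_k$ with $2\le k\le r-1$ becomes a $2$ at position $k$; the reflections $s_1$ and $s_r$ become a $1$ at positions $1$ and $r$ respectively; on a support $\{k,k+1\}$ the blocks $s_ks_{k+1}$, $s_{k+1}s_k$, $s_ks_{k+1}s_k$ become $(2,1)$, $(1,2)$, $(1,1)$; and $s_{k+2}s_ks_{k+1}$ becomes $(1,2,1)$. Because the blocks are pairwise independent, their supports are separated by gaps, so the placed patterns are separated by $0$'s; a direct check of each of the finitely many local patterns against the defining condition of $X_r$, using that the gap-neighbors are $0$, confirms that $\Phi(\sigma)\in X_r$.

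Finally I would verify that $\Phi$ is a bijection by exhibiting its inverse: given $x\in X_r$, split it into maximal nonzero runs, and by the classification above each run is exactly one admissible pattern, which corresponds to a unique basic allowable subword on the same indices, while the surrounding $0$'s translate back into the required gaps, so the collected blocks form a pairwise independent subset whose product recovers $\sigma$. I expect the main obstacle to be the run-classification step, that is, proving that runs have length at most $3$ and that boundary runs are forced to be a single $1$, since this is where the self-referential defining condition of $X_r$ must be unwound. Once this dictionary is established, matching it against \Cref{thm:BAS_for_negAlpha} is routine; in particular the index ranges align exactly, as length-two and length-three runs never touch an endpoint just as the corresponding blocks never use $s_1$ or $s_r$.
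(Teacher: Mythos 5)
Your proposal is correct and follows essentially the same route as the paper: both decompose elements of $X_r$ into the admissible nonzero patterns ($1$, $2$, $11$, $12$, $21$, $121$) separated by zeros and match these patterns against the basic allowable subwords of \Cref{thm:BAS_for_negAlpha}, with the unique-factorization statement of \Cref{thm:characterize_BAS} guaranteeing the correspondence is a bijection. The only differences are cosmetic---you construct the map from $\calA_r(\hroot,-\hroot)$ to $X_r$ rather than the reverse, you pick a different (equally valid) matching of the three length-two patterns to the three length-two blocks, and you spell out the run-length classification that the paper leaves as an observation.
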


\begin{proof}
Observe that, an element of $X_r$ can be decomposed into the following (consecutive) subsequences whose neighbors are all 0's  \[
\tau_1:= 1, \quad \tau_2:= 2, \quad
\tau_3:= 11, \quad \tau_4:= 
12,\quad \tau_5:= 21,\quad \mbox{and}\quad\tau_6:= 121.
\]
Notice that $\tau_1$ can only appear at the beginning or end of an element of $X_r$, and no other subsequence $\tau_i$ can appear at the beginning or end of the sequence in $X_r$. Given $x \in X_r$, define $\psi(x) \in A_r$ in the following way. We begin by setting $\psi(x) = id$. Now, we check which subsequences $\tau_i$ are contained in $x$ and at what indices they appear.
\begin{enumerate}
    \item If $x$ has a subsequence $\tau_1$ in position $i$, then  we multiply $\psi(x)$ by $s_i$. Necessarily, $i = 1$ or $i = r$ as discussed.
    \item If $x$ has a subsequence $\tau_2$ in position $i$, then  we multiply $\psi(x)$ by $s_i$.  
    \item If $x$ has a subsequence $\tau_3$ (resp. $\tau_4, \tau_5$) in positions $i,i+1$, we multiply $\psi(x)$ by $s_is_{i+1}$(resp $s_{i+1}s_i$, $s_is_{i+1},s_i$).
    \item If $x$ has a subsequence $\tau_6$ in positions $i,i+1,i+2$, we multiply $\psi(x)$ by $s_is_{i+2}s_{i+1}$.
\end{enumerate}

Comparing our method of decomposing an element of $X_r$ with \Cref{thm:BAS_for_negAlpha} shows that $\psi(x) \in \mathcal{A}_r(\hroot,-\hroot)$. Moreover, an inverse to $\psi$ can be constructed in a similar manner, verifying that this map is a bijection. 
\end{proof}

We conclude this section by  providing a generating function that records the cardinalities of all Weyl alternation sets in type $A$ for negative roots. 

\begin{theorem}\label{thm:gen_func} If $\hroot$ is the highest root and $\mu=-\alpha_{i,j}$ with $1\leq i\leq j\leq r$
 is a negative root of the Lie algebra of type $A_r$, then we have the generating function
\begin{align*}
\sum_{1\leq i\leq j\leq r} \vert \mathcal{A}_r(\hroot, -\alpha_{i,j})\vert x^r s^it^j = \frac{1}{t(1-x-x^2)}\left((1-x)t\calH(xt,s)+\calP(xt,s)-\frac{xst}{1-xst-(xst)^2}\right),
\end{align*} where $\calH(xt,s)$ and $\calP(xt,s)$ are as in \Cref{lem:GenFxnPiAndHi}.
\end{theorem}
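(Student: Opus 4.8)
The plan is to build the trivariate generating function by fixing the negative root $-\alpha_{i,j}$, summing over the rank $r$ first, and only afterward summing over all pairs $1\le i\le j$ against the monomial $s^it^j$. For a fixed pair $i\le j$, write $a_r=\vert\mathcal{A}_r(\hroot,-\alpha_{i,j})\vert$; this sequence vanishes for $r<j$, and \Cref{prop:RecurrenceOnSizesFixedRoot} guarantees that it obeys the Fibonacci recurrence $a_r=a_{r-1}+a_{r-2}$ as soon as $r\ge j+2$. Hence \Cref{lem:FibWIthOtherStartingPOints} applies with starting index $j$, yielding
\[
\sum_{r\ge 0} a_r x^r=\frac{a_j\,x^{j}+(a_{j+1}-a_j)\,x^{j+1}}{1-x-x^2}.
\]
The crucial point is that these two initial values are exactly the quantities whose generating functions were computed in \Cref{lem:GenFxnPiAndHi}: by definition $a_j=\vert\mathcal{A}_j(\hroot,-\alpha_{i,j})\vert=h_j^i$ (the top index of the root equals the rank), while the earlier identity $\vert\mathcal{A}_r(\hroot,-\alpha_{i,r-1})\vert=p_r^i$ taken at $r=j+1$ gives $a_{j+1}=p_{j+1}^i$.

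Substituting these values and summing over all triples with $1\le i\le j\le r$ against $s^it^j$ produces
\[
\sum_{1\le i\le j\le r}\vert\mathcal{A}_r(\hroot,-\alpha_{i,j})\vert\, x^r s^it^j=\frac{1}{1-x-x^2}\sum_{1\le i\le j}\Bigl(h_j^i\,x^{j}+(p_{j+1}^i-h_j^i)\,x^{j+1}\Bigr)s^it^j.
\]
I would then recognize each resulting double sum through the substitution $x\mapsto xt$, which absorbs the weight $t^j$ by turning the top index $j$ of the root into the rank variable of the one-parameter families $\calH$ and $\calP$. Concretely, $\sum_{1\le i\le j}h_j^i x^{j}s^it^j=\calH(xt,s)$ and $\sum_{1\le i\le j}h_j^i x^{j+1}s^it^j=x\calH(xt,s)$, so the $h$-contributions collapse to $(1-x)\calH(xt,s)$. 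For the $p$-term I would reindex by $k=j+1$, obtaining $\sum_{1\le i\le j}p_{j+1}^i x^{j+1}s^it^j=\tfrac1t\sum_{k}\sum_{1\le i\le k-1}p_k^i(xt)^k s^i$.

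The one genuinely new ingredient, and the step I expect to require the most care, is the correction coming from the restricted inner range $i\le k-1$ rather than $i\le k$. Since $\calP(xt,s)=\sum_{1\le i\le k}p_k^i(xt)^ks^i$ ranges over $i\le k$, the difference is exactly the diagonal terms $\sum_k p_k^k(xst)^k$. To evaluate these I would invoke \Cref{lem:p_diag_recurrence}, whose recurrence $p_r^i=p_{r-1}^{i-1}+p_{r-2}^{i-2}$ restricts on the diagonal $i=r$ to $p_r^r=p_{r-1}^{r-1}+p_{r-2}^{r-2}$; combined with the values $p_1^1=p_2^2=1$ and $p_3^3=2$ read from \Cref{tab:initial_values_p_h}, this shows $p_r^r=F_r$, whence $\sum_k p_k^k(xst)^k=\frac{xst}{1-xst-(xst)^2}$. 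The $p$-contribution is therefore $\tfrac1t\bigl(\calP(xt,s)-\frac{xst}{1-xst-(xst)^2}\bigr)$. Assembling the $h$- and $p$-contributions and factoring $\tfrac1t$ out of the bracket gives exactly
\[
\frac{1}{t(1-x-x^2)}\Bigl((1-x)t\,\calH(xt,s)+\calP(xt,s)-\frac{xst}{1-xst-(xst)^2}\Bigr),
\]
which is the claimed formula. Thus the obstacles are essentially matters of bookkeeping: correctly identifying the two initial conditions of each fixed-root Fibonacci sequence with $h_j^i$ and $p_{j+1}^i$, and isolating the diagonal $p_r^r=F_r$ that accounts for the subtracted Fibonacci generating function.
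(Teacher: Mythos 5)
Your proposal is correct and follows essentially the same route as the paper's proof: both apply \Cref{prop:RecurrenceOnSizesFixedRoot} with \Cref{lem:FibWIthOtherStartingPOints} to each fixed root, identify the initial values as $a_j=h_j^i$ and $a_{j+1}=p_{j+1}^i$, recognize the resulting double sums as $\calH(xt,s)$ and $\calP(xt,s)$, and subtract the diagonal contribution $\sum_k p_k^k(xst)^k=\frac{xst}{1-xst-(xst)^2}$. The only differences are organizational (you sum over all pairs $(i,j)$ at once rather than nesting the sum over $j$ inside the sum over $i$) and your explicit identification $p_r^r=F_r$ via the table value $p_3^3=2$, which is in fact slightly more careful than the paper's appeal to \Cref{lem:p_diag_recurrence}, whose hypothesis $3<i$ only covers the diagonal recurrence from $r\geq4$ onward.
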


\begin{proof}
For a fixed pair $1 \leq i\leq j$, from \Cref{prop:RecurrenceOnSizesFixedRoot} and \Cref{lem:FibWIthOtherStartingPOints}, we have 
\begin{align*}
\sum_{r \geq j} \vert \mathcal{A}_r(\hroot, -\alpha_{i,j})\vert x^r
&= \frac{h_j^i x^j + (\vert \mathcal{A}_{j+1}(\hroot, -\alpha_{i,j})\vert - h_j^i)x^{j+1}}{1-x-x^2}\\
&= \frac{h_j^i x^j + (p_{j+1}^i - h_j^i)x^{j+1}}{1-x-x^2},
\end{align*}
where, by \Cref{lem:p_is_i_to_r-1}, we can identify $\mathcal{A}_{j+1}(\hroot, -\alpha_{i,j})$ with $P_{j+1}^i$. 

Now, if we fix $i \geq 1$ and sum over all $j$, we have 
\begin{align*}
\sum_{j \geq i} \bigg(\frac{h_j^i x^j + (p_{j+1}^i - h_j^i)x^{j+1}}{1-x-x^2}\bigg)t^j
&=\frac{1}{1-x-x^2} \bigg( \sum_{j \geq i} h_j^i x^j t^j + \sum_{j \geq i} p_{j+1}^i x^{j+1}t^j - \sum_{j \geq i} h_j^i x^{j+1}t^j\bigg) \\
&= \frac{1}{1-x-x^2}\bigg((1-x) \calH^i(xt) + \frac{\calP^i(xt) - p_i^i (xt)^i}{t}\bigg).
\end{align*}

Finally, we must take the resulting expression and sum over all $i \geq 1$, yielding \begin{align*}
\sum_{1\leq i\leq j\leq r}\vert \mathcal{A}_r(\hroot, -\alpha_{i,j})\vert x^r s^it^j&=\sum_{i \geq 1}\frac{1}{1-x-x^2}\bigg((1-x) \calH^i(xt) + \frac{\calP^i(xt) - p_i^i (xt)^i}{t}\bigg)s^i \\
&=\frac{1}{t(1-x-x^2)}\bigg((1-x)t \sum_{i \geq 1}\calH^i(xt)s^i + \sum_{i \geq 1}\calP^i(xt)s^i -\sum_{i \geq 1} p_i^i(xst)^i\bigg).
\end{align*}

We understand the first two summations from \Cref{lem:GenFxnPiAndHi}. We can compute $\sum_{i \geq 1}p_i^i(xt)^is^i$ by combining \Cref{lem:FibWIthOtherStartingPOints} and \Cref{lem:p_diag_recurrence}.
From \Cref{lem:GenFxnPiAndHi}, we have the starting values $p_1^1 = p_2^2 = 1$. Therefore, we can conclude

\begin{align*}
\sum_{1\leq i\leq j\leq r}\vert \mathcal{A}_r(\hroot, -\alpha_{i,j})\vert x^r s^it^j&=\frac{1}{t(1-x-x^2)}\left((1-x)t\calH(xt,s)+\calP(xt,s)-\frac{p_1^1xst+(p_2^2-p_1^1)(xst)^2}{1-xst-(xst)^2}\right)\\
&= \frac{1}{t(1-x-x^2)}\left((1-x)t\calH(xt,s)+\calP(xt,s)-\frac{xst}{1-xst-(xst)^2}\right).\qedhere
\end{align*}

\end{proof}

\section{Future work}\label{sec:future}
Lusztig introduced the $q$-analog of Kostant's partition function as the polynomial-valued function 
    $\wp_q(\xi)=c_0+c_1q+c_2q^2+\cdots + c_kq^k$,
where $c_i $ equals the number of ways to write $\xi$ as a sum of exactly $i$ positive roots \cite[Proposition 9.2]{Lusztig}.
Then the $q$-analog of Kostant's weight multiplicity formula is given by
     \[m_q(\lambda, \mu)=\sum_{\sigma \in W} (-1)^{\ell(\sigma)}\wp_q(\sigma(\lambda+\rho)-\mu - \rho).\]
In type $A_r$, using the Weyl alternation set $\calA(\hroot,0)$, Harris gives a completely combinatorial proof that $m_q(\hroot,0)=\sum_{i=1}^r q^i$ \cite[Proposition~3.3]{PHThesisPublication}.
Again in type $A_r$, using the Weyl alternation set $\calA(\hroot,\mu)$ with $\mu\in\Phi^+$, Harry shows that $m_q(\hroot,\mu)$ is a power of $q$ \cite[Theorem~4.1]{Harry}.
Harry conjectured the following.
\begin{conjecture}\label{big conjecture}
     If $\hroot$ is the highest root and $\mu=-\alpha_{i,j}$ with $1\leq i\leq j\leq r$
 is a negative root of the Lie algebra of type $A_r$, then
\[m_q(\hroot,\mu)=q^{r+j-i+1}+q^{r+j-i}-q^{j-i+1}.\]
\end{conjecture}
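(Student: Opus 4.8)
The plan is to compute $m_q(\hroot,\mu)$ directly from its definition by restricting the alternating sum to the Weyl alternation set and exploiting the explicit description of $\BAS(\hroot,\mu)$ from \Cref{cor:BAS1j,cor:BASir,cor:BASij}. Since $\wp_q(\xi)$ has positive coefficients exactly when $\wp(\xi)>0$ and is otherwise $0$, only elements of $\calA(\hroot,\mu)$ contribute, so
\[
m_q(\hroot,\mu)=\sum_{\sigma\in\calA(\hroot,\mu)}(-1)^{\ell(\sigma)}\wp_q\big(\sigma(\hroot+\rho)-\mu-\rho\big).
\]
By \Cref{thm:characterize_BAS}, each $\sigma\in\calA(\hroot,\mu)$ is a product $\prod_{b\in B}b$ over a pairwise independent subset $B\subseteq\BAS(\hroot,\mu)$, and since such products are reduced by \Cref{lem:properties_of_influence}, its length is $\ell(\sigma)=\sum_{b\in B}\ell(b)$. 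Thus the sign is fully determined by $B$, and the computation reduces to a signed sum over the independence complex of $\BAS(\hroot,\mu)$.

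Next I would compute each weight $\xi_\sigma:=\sigma(\hroot+\rho)-\mu-\rho$ explicitly. Using \Cref{lem:SmallerComputation,lem:LargerComputations} together with the coefficient-wise description in the proof of \Cref{thm:characterize_BAS}, the coefficient of $\alpha_m$ in $\xi_\sigma$ equals the coefficient contributed by the unique $b\in B$ with $m\in I(b)$, or the coefficient of $\alpha_m$ in $\hroot+\alpha_{i,j}$ when no such $b$ exists. Hence $\xi_\sigma$ is assembled by pasting the local contribution of each basic allowable subword onto its influence against a fixed background, and all coefficients are small (the background has coefficients at most $2$, and \Cref{lem:LargerComputations} bounds the local contributions). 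Crucially, reductions such as $s_k(\rho)=\rho-\alpha_k$ for indices $k\notin[i,j]$ force the coefficient of $\alpha_k$ to vanish.

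The key computational step is evaluating $\wp_q(\xi_\sigma)$. Because the positive roots of type $A_r$ are the intervals $\alpha_{k,l}$, any index $m$ at which $\xi_\sigma$ has coefficient $0$ cannot be covered, so no interval in a decomposition can straddle it; hence $\wp_q$ is multiplicative across such zero coordinates, and $\wp_q(\xi_\sigma)$ factors as a product of $\wp_q$-values of short banded segments. Each segment has coefficients in a small range, so its $\wp_q$ can be computed once and for all by a transfer-matrix recursion along the coordinate line, namely the $q$-weighted count of interval covers of a fixed coefficient vector. I would tabulate these local $\wp_q$ factors for every basic allowable subword type appearing in \Cref{cor:BAS1j,cor:BASir,cor:BASij}, then assemble $\wp_q(\xi_\sigma)$ for each $B$ as the product of its local factors.

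Finally, I would sum the signed contributions. The most efficient organization parallels the cardinality argument: the decomposition of $\calA_r(\hroot,\mu)$ into a copy of $\calA_{r-1}(\hroot,\mu)$ and a translate of $\calA_{r-2}(\hroot,\mu)$ used in \Cref{prop:RecurrenceOnSizesFixedRoot} should induce, after tracking how $\xi_\sigma$ and its $\wp_q$ transform under the embedding, a linear recurrence in $r$ for $m_q(\hroot,\mu)$; one then verifies the closed form $q^{r+j-i+1}+q^{r+j-i}-q^{j-i+1}$ by induction, checking the base cases $r=j$ and $r=j+1$ by direct computation and separately handling the boundary cases $i=1$ and $j=r$ governed by \Cref{cor:BAS1j,cor:BASir}. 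Alternatively, one can encode the signed $q$-weighted sum in a generating function in the spirit of \Cref{thm:gen_func}. The main obstacle is this last step combined with the signed bookkeeping: unlike the unsigned cardinalities, which simply add, the $q$-weighted contributions cancel in a delicate way, and the hard part is showing that the massive alternating cancellation among the many element types collapses to exactly three surviving monomials, uniformly across all three cases for $\mu$. Setting $q=1$ gives the consistency check $m(\hroot,\mu)=1$, reflecting that $\mu$ is a root of the adjoint representation.
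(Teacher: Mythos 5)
You are attempting to prove what is, in the paper itself, Conjecture~\ref{big conjecture}: the authors state explicitly that they only have a proof in the height-one case $\mu=-\alpha_i$ (not included in the paper) and that the general case is future work, so there is no paper proof to compare against and your proposal must stand alone as a complete argument. It does not. Your reductions are sound and match the paper's infrastructure: restricting the alternating sum to $\calA(\hroot,\mu)$ is valid since $\wp_q(\xi)$ is nonzero exactly when $\wp(\xi)>0$; lengths add over products of pairwise independent elements of $\BAS(\hroot,\mu)$ because such products are reduced; and $\wp_q$ does factor across coordinates where the coefficient of $\alpha_m$ vanishes, since positive roots of type $A_r$ are intervals. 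But everything after these reductions is deferred: no local $\wp_q$ factors are tabulated, no base cases are computed, and no recurrence or generating function for the signed sum is actually derived. You yourself concede that ``the hard part is showing that the massive alternating cancellation \ldots collapses to exactly three surviving monomials'' --- that cancellation \emph{is} the conjecture, so deferring it means nothing has been proved.

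There is also a concrete obstruction in the step you lean on most, namely that the decomposition in \Cref{prop:RecurrenceOnSizesFixedRoot} ``should induce a linear recurrence in $r$ for $m_q(\hroot,\mu)$.'' The weights $\xi_\sigma=\sigma(\hroot+\rho)-\rho-\mu$ change with the rank, because $\hroot$ gains the summand $\alpha_r$ in passing from rank $r-1$ to rank $r$: for example, with $\mu=-\alpha_1$ and $\sigma=1$ one has $\wp_q(\xi_1)=q^2$ in rank $1$ but $\wp_q(\xi_1)=q^2+q^3$ in rank $2$. Thus the embedding $\calA_{r-1}(\hroot,\mu)\hookrightarrow\calA_r(\hroot,\mu)$ preserves neither the partition-function arguments nor their $q$-values, and the unsigned cardinality recurrence $\vert\calA_r\vert=\vert\calA_{r-1}\vert+\vert\calA_{r-2}\vert$ says nothing about how the signed, $q$-weighted sums transform. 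Indeed, the conjectured formula satisfies the \emph{inhomogeneous} relation $m_q^{(r)}=q\,m_q^{(r-1)}+(q-1)q^{j-i+1}$, which cannot arise from naively transporting contributions along the set-level decomposition; deriving it (or the transfer-matrix analogue for the signed $q$-weighted sum) uniformly in $i$, $j$, and $r$ is precisely the missing work. As it stands, your proposal is a reasonable research outline consistent with the paper's stated plan, but it is not a proof.
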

At this moment, we have a proof of \Cref{big conjecture} for $\mu=-\alpha_i$ with $1\leq i\leq r$. 
Given our characterization of the Weyl alternation sets $\calA(\hroot,\mu)$ for negative roots $\mu\in\Phi^-$, in future work, we aim to use similar techniques to give a full proof of \Cref{big conjecture}.

\section*{Acknowledgments}

 This material is based on work supported by the National Science Foundation under Grant Number DMS 1916439 while the authors were participating in the Mathematics Research Communities (MRC) 2024 Summer Conference at Beaver Hollow Conference Center in Java Center, New York. We extend our thanks to the American Mathematical Society for their support. We also thank the reviewers who provided helpful feedback on an abbreviated version of the article which was submitted to FPSAC 2025.
 
 E. Banaian was partially supported by Research Project 2 from the Independent Research Fund
Denmark (grant no. 1026-00050B).
This work was supported by a grant from the Simons Foundation (Travel Support for Mathematicians, P.\ E.\ Harris).

\appendix 

\begin{appendices}
\section{Proofs} \label{app:proofs}

We recall Lemma \ref{lem:ProductOfTwoBASs}.

\begin{lemma}
    Let $S$ be the set of elements listed in \Cref{prop:mu=-hroot}.
    Let $\sigma, \tau\in S$ be nonindependent elements.
    Then, the product $\sigma\tau$ falls into one of the following three cases:
    \begin{enumerate}
        \item $\sigma\tau\in S$,
        \item $\sigma\tau=\nu_1\nu_2\cdots\nu_m$ where $\{\nu_1,\nu_2,\ldots,\nu_m\}$ is a (possibly empty) pairwise independent subset of $S$ and \\
        $\ell(\nu_1)+\cdots+\ell(\nu_m)<\ell(\sigma)+\ell(\tau)$, or
        \item $\sigma\tau$ contains a forbidden subword listed in \Cref{lem:forbidden words} or~\Cref{Highest:1234}.
    \end{enumerate}
\end{lemma}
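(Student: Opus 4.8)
The plan is to prove the lemma by an exhaustive analysis of the ordered pairs of forms $(\sigma,\tau)$ drawn from the five families (a)--(e) of \Cref{prop:mu=-hroot}. The key structural observation is that every element of $S$ has connected influence, so each of $I(\sigma)$ and $I(\tau)$ is an interval of consecutive indices; consequently $\sigma$ and $\tau$ fail to be independent precisely when these two intervals overlap or are immediately adjacent. For a fixed form of $\sigma$ with index parameter $k$, this restricts the admissible index parameter of $\tau$ to a short list of relative offsets, and I would enumerate exactly these. Since the computation in the main text already disposes of the form-pair (a)(a), what remains is to carry out the same bookkeeping for the remaining pairs.

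In each configuration I would compute the product $\sigma\tau$ explicitly in the Weyl group of type $A_r$, using only the relations $s_i^2=1$, the commutation $s_is_j=s_js_i$ for $|i-j|\geq 2$, and the braid relation $s_is_{i+1}s_i=s_{i+1}s_is_{i+1}$. To reduce the volume of casework, I would exploit the Dynkin diagram automorphism $i\mapsto r+1-i$, which interchanges families (b) and (c), fixes families (a), (d), and (e), and preserves (up to commutation) the lists of forbidden subwords in \Cref{lem:forbidden words}; combined with the symmetry exchanging the roles of $\sigma$ and $\tau$, this collapses many of the twenty-five ordered form-pairs into a smaller number of genuinely distinct computations.

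The classification into cases (1)--(3) then follows a uniform logic. If an $s_i^2$ cancellation or braid simplification strictly shortens the word, the product is a shorter pairwise independent product of elements of $S$ and we land in case (2); if it rewrites to a single element of one of the families (a)--(e), we land in case (1); and if some reduced expression for $\sigma\tau$ exhibits one of the length-two or length-three forbidden subwords of \Cref{lem:forbidden words}, or a product of the four consecutive simple reflections barred by \Cref{Highest:1234}, we land in case (3). Special attention is needed at the boundaries of the index ranges, where the products $s_1s_2$, $s_2s_1$, $s_{r-1}s_r$, and $s_rs_{r-1}$ force case (3) in place of case (1); the form-pair (a)(a) already exhibits this boundary behavior.

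The main obstacle I anticipate is completeness rather than depth: one must verify that every nonindependent pair is captured by some configuration and that no configuration escapes the three prescribed cases. The products involving families (d) and (e) are the most delicate, since each has multiple reduced expressions and applying the braid relation can create or obscure the very forbidden patterns one is testing for. I would therefore fix a single reduced expression for each such product before reading off its classification, and, when a product spans four consecutive indices, verify directly that it matches the hypothesis of \Cref{Highest:1234}.
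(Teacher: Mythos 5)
Your proposal follows essentially the same route as the paper's own proof: an exhaustive case analysis over ordered pairs of forms (a)--(e), fixing the index of $\sigma$ and listing the few offsets of $\tau$ that violate independence, computing each product via the Coxeter relations, classifying into cases (1)--(3) with special care at the boundary indices, and using the diagram automorphism $i\mapsto r+1-i$ (realized in the paper as conjugation by the longest element $w_0$) to identify family (b) with family (c). One caution: the $\sigma\leftrightarrow\tau$ swap does not collapse cases the way the automorphism does, since $\sigma\tau$ and $\tau\sigma$ are different elements that can genuinely classify differently (e.g., for $\sigma=s_k$ and $\tau=s_{k+2}s_ks_{k+1}$ with matching indices, $\sigma\tau=s_{k+2}s_{k+1}\in S$ while $\tau\sigma$ contains the forbidden subword $s_{k+2}s_{k+1}s_k$), which is why the paper computes both orders separately in every mixed case.
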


\begin{proof}
    We proceed via cases. Let $\sigma$ and $\tau$ come from one of the following (not necessarily distinct) collections:
\begin{enumerate}
        \item[(a)] $s_k$ with $1 \le k \le r$,
        \item[(b)] $s_{k+1}s_{k}$ with $2 \le k \le r-2$,
        \item[(c)] $s_{k}s_{k+1}$ with $2 \le k \le r-2$, 
        \item[(d)] $s_{k}s_{k+1}s_{k}$ with $2 \le k \le r-2$, or
        \item[(e)] $s_{k+2}s_{k}s_{k+1}$ with $2 \le k \le r-3$.
    \end{enumerate}
    For each pair of forms that $\sigma, \tau\in S$ can respectively take, we fix the indices of $\sigma$, then consider the range of indices of $\tau$ for which $\sigma$ and $\tau$ are not independent.

    To reduce the number of cases that need to be handled, consider the automorphism of the Weyl group $f:W\to W$ given by the conjugation $f(\sigma)=w_0\sigma w_0$ by the longest element $w_0$ of $W$. This map $f$ sends $s_i$ to $s_{r+1-i}$ and preserves length.  
    Note additionally that $f(S)=S$ and words of the form (a), (d), and (e) are sent to words of the same form. Words of the form (b) are taken to words of the form (c) by $f$ and vice versa. If $\sigma$ is a forbidden word as in \Cref{lem:forbidden words}, then so is $f(\sigma)$. 
    As the automorphism $f$ preserves words of the form (a), (d), and (e), while replacing a word of the form (c) with a word of the form (b), we can reduce the number of cases we consider.
\smallskip

\noindent \textbf{Products of the form (a)(a):}
    Let $\sigma = s_k$ for some $1 \leq k \leq r$. For $\tau = s_{j}$ with $1\leq j\leq r$ not to be independent with $\sigma$, we must have $j\in\{k-1,k,k+1\}$.
    \begin{itemize}
        \item Let $j=k-1$ for $2\leq k\leq r$. Then $\sigma\tau=s_ks_{k-1}$. If $k=2$ (resp., $k=r$), then $s_2s_1$ (resp., $s_rs_{r-1}$) is forbidden by \Cref{lem:forbidden words}. That is, $\sigma\tau$ falls into case (3). Otherwise, $3\leq k\leq r-1$ and $s_ks_{k-1}\in S$. That is, $\sigma\tau$ falls into case (1). 
        \item Let $j=k$ for $1\leq k\leq r$. Then $\sigma\tau=s_ks_k=1$, a product of an empty subset of $S$, with $0=\ell(1)=\ell(s_ks_{k})<\ell(s_k)+\ell(s_{k})=2$. That is, $\sigma\tau$ falls into case (2).
        \item Let $j=k+1$ for $1\leq k\leq r-1$. Then $\sigma\tau=s_ks_{k+1}$. If $k=1$ (resp., $k=r-1$), then $s_1s_2$ (resp., $s_{r-1}s_r$) is forbidden by \Cref{lem:forbidden words}. That is, $\sigma\tau$ falls into case (3). Otherwise, $2\leq k\leq r-2$ and $s_ks_{k+1}\in S$. That is, $\sigma\tau$ falls into case (1).
    \end{itemize}
\smallskip

\noindent \textbf{Products of the form (b)(b) (equivalent to (c)(c) via $f$):}
    Let $\sigma= s_{k+1}s_k$ for some $2\leq k\leq r-2$. For $\tau=s_{j+1}s_j$ with $2\leq j\leq r-2$ not to be independent with $\sigma$, we must have $j\in\{k-2,k-1,k,k+1,k+2\}$.
    \begin{itemize}
        \item Let $j=k-2$ for $4\leq k\leq r-2$. Then $\sigma\tau=(s_{k+1}s_k)(s_{k-1}s_{k-2})$ contains $s_ks_{k-1}s_{k-2}$, forbidden by \Cref{lem:forbidden words}. That is, $\sigma\tau$ falls into case (3).
        \item Let $j=k-1$ for $3\leq k\leq r-2$. Then $\sigma\tau=(s_{k+1}s_k)(s_ks_{k-1})=s_{k+1}s_{k-1}$ is a pairwise independent product with $2=\ell(s_{k+1})+\ell(s_{k-1})<\ell(\sigma)+\ell(\tau)=4$. That is, $\sigma\tau$ falls into case (2).
        \item Let $j=k$ for $2\leq k\leq r-2$. Then $\sigma\tau=(s_{k+1}s_k)(s_{k+1}s_k)=s_{k+1}s_ks_{k+1}s_k=s_{k+1}s_{k+1}s_ks_{k+1}=s_ks_{k+1}$, a basic allowable subword of length 2. That is, $\sigma\tau$ falls into case (2).
        \item Let $j=k+1$ for $2\leq k\leq r-3$. Then $\sigma\tau=(s_{k+1}s_k)(s_{k+2}s_{k+1})$ contains $s_{k+1}s_k s_{k+2}$ forbidden by \Cref{lem:forbidden words}. That is, $\sigma\tau$ falls into case (3).
        \item Let $j=k+2$ for $2\leq k\leq r-4$. Then $\sigma\tau=(s_{k+1}s_k)(s_{k+3}s_{k+2})=s_{k+3}s_{k+1}s_ks_{k+2}$ contains $s_{k+1}s_k s_{k+2}$ forbidden by \Cref{lem:forbidden words}. That is, $\sigma\tau$ falls into case (3).
    \end{itemize}
\smallskip

\noindent \textbf{Products of the form (d)(d):}
    Let $\sigma = s_ks_{k+1}s_k$ for some $2 \le k \le r-2$. For $\tau= s_js_{j+1}s_j$ with $2 \le j \le r-2$ not to be independent with $\sigma$, we must have $j\in\{k-2,k-1,k,k+1\}$.
    \begin{itemize}
        \item Let $j=k-2$ for $4\leq k\leq r-2$. Then $\sigma\tau=(s_ks_{k+1}s_k)(s_{k-2}s_{k-1}s_{k-2})$ contains $s_{k+1}s_ks_{k-2}s_{k-1}$, forbidden by \Cref{Highest:1234}. That is, $\sigma\tau$ falls into case (3).
        \item Let $j=k-1$ for $3\leq k\leq r-2$. Then $\sigma\tau=(s_ks_{k+1}s_k)(s_{k-1}s_ks_{k-1})$ contains $s_{k+1}s_ks_{k-1}$, forbidden by \Cref{lem:forbidden words}. That is, $\sigma\tau$ falls into case (3).
        \item Let $j=k$ for $2\leq k\leq r-2$. Then $\sigma\tau=(s_ks_{k+1}s_k)(s_ks_{k+1}s_k)=1$, a product of an empty subset of $S$, with $0=\ell(1)=\ell(s_ks_{k})<\ell(s_k)+\ell(s_{k})=2$. That is, $\sigma\tau$ falls into case (2).
        \item Let $j=k+1$ for $2\leq k\leq r-3$. Then  $\sigma\tau=(s_ks_{k+1}s_k)(s_{k+1}s_{k+2}s_{k+1})$ contains $s_ks_{k+1}s_{k+2}$, forbidden by \Cref{lem:forbidden words}. That is, $\sigma\tau$ falls into case (3).
    \end{itemize}
    \smallskip
    
\noindent \textbf{Products of the form (e)(e):}
    Let $\sigma = s_{k+2}s_ks_{k+1}$ for some $2 \le k \le r-3$. For $\tau=s_{j+2}s_js_{j+1}$ with $2 \le j \le r-3$ not to be independent with $\sigma$, we must have $j\in\{k-3,k-2,k-1,k,k+1,k+2,k+3\}$.
    \begin{itemize}
        \item Let $j=k-3$ for $5\leq k\leq r-3$. Then $\sigma\tau=(s_{k+2}s_ks_{k+1})(s_{k-1}s_{k-3}s_{k-2})$ contains $s_{k+2}s_ks_{k+1}s_{k-1}$, forbidden by \Cref{Highest:1234}. That is, $\sigma\tau$ falls into case (3).
        \item Let $j=k-2$ for $4\leq k\leq r-3$. Then $\sigma\tau=(s_{k+2}s_ks_{k+1})(s_{k}s_{k-2}s_{k-1})$ contains $s_{k+1}s_ks_{k-2}s_{k-1}$, forbidden by \Cref{Highest:1234}. That is, $\sigma\tau$ falls into case (3).
        \item Let $j=k-1$ for $3\leq k\leq r-3$. Then $\sigma\tau=(s_{k+2}s_ks_{k+1})(s_{k+1}s_{k-1}s_{k})=(s_{k+2})(s_ks_{k-1}s_k)$ is a pairwise independent product with $4=\ell(s_{k+2})+\ell(s_{k}s_{k-1}s_k)<\ell(\sigma)+\ell(\tau)=6$. That is, $\sigma\tau$ falls into case (2).
        \item Let $j=k$ for $2\leq k\leq r-3$. Then $\sigma\tau=(s_{k+2}s_ks_{k+1})(s_{k+2}s_ks_{k+1})$ contains $s_ks_{k+1}s_{k+2}$, forbidden by \Cref{lem:forbidden words}. That is, $\sigma\tau$ falls into case (3).
        \item Let $j=k+1$ for $2\leq k\leq r-4$. Then $\sigma\tau=(s_{k+2}s_ks_{k+1})(s_{k+3}s_{k+1}s_{k+2})$ contains $s_{k+2}s_ks_{k+1}s_{k+3}$, forbidden by \Cref{Highest:1234}. That is, $\sigma\tau$ falls into case (3).
        \item Let $j=k+2$ for $2\leq k\leq r-5$. Then $\sigma\tau=(s_{k+2}s_ks_{k+1})(s_{k+4}s_{k+2}s_{k+3})$ contains $s_{k+1}s_{k+4}s_{k+2}s_{k+3}$, forbidden by \Cref{Highest:1234}. That is, $\sigma\tau$ falls into case (3).
        \item Let $j=k+3$ for $2\leq k\leq r-6$. Then $\sigma\tau=(s_{k+2}s_ks_{k+1})(s_{k+5}s_{k+3}s_{k+4})=(s_{k+2}s_ks_{k+1})(s_{k+3}s_{k+5}s_{k+4})$ contains $s_{k+2}s_ks_{k+1}s_{k+3}$, forbidden by \Cref{Highest:1234}. That is, $\sigma\tau$ falls into case (3). 
    \end{itemize}
    
\smallskip

\noindent \textbf{Products of the form (a)(b) or (b)(a) (equivalent to (a)(c) or (c)(a) via $f$):}
    Let $\sigma = s_k$ for some $1 \leq k \leq r$. For $\tau = s_{j+1}s_{j}$ with $2\leq j\leq r-2$ not to be independent with $\sigma$, we must have $j\in\{k-2,k-1,k,k+1\}$.
    \begin{itemize}
        \item Let $j=k-2$ for $4\leq k\leq r$. Then \begin{itemize}
            \item $\sigma\tau=(s_k)(s_{k-1}s_{k-2})=s_ks_{k-1}s_{k-2}$ is forbidden by \Cref{lem:forbidden words}, and
            \item $\tau\sigma=(s_{k-1}s_{k-2})s_k=s_{k-1}s_{k-2}s_k$ is forbidden by \Cref{lem:forbidden words}.
        \end{itemize}
        {That is, both $\sigma\tau$ and $\tau\sigma$ fall into case (3).}
        \item Let $j=k-1$ for $3\leq k\leq r-1$. Then \begin{itemize}
            \item $\sigma\tau=(s_k)(s_k s_{k-1})=s_{k-1}\in S$, and
            \item $\tau\sigma=s_ks_{k-1}s_k\in S$.
        \end{itemize}
        {That is, both $\sigma\tau$ and $\tau\sigma$ fall into case (1).}
        \item Let $j=k$ for $2\leq k\leq r-2$. Then \begin{itemize}
            \item $\sigma\tau=(s_k)(s_{k+1}s_k)=s_k s_{k+1} s_k\in S$, and
            \item $\tau\sigma=(s_{k+1}s_k)s_k=s_{k+1}\in S$.
        \end{itemize} 
        {That is, both $\sigma\tau$ and $\tau\sigma$ fall into case (1).}
        \item Let $j=k+1$ for $1\leq k\leq r-3$. Then\begin{itemize}
            \item $\sigma\tau=(s_k)(s_{k+2}s_{k+1})=s_k s_{k+2}s_{k+1}=s_{k+2}s_k s_{k+1}\in S$ if $1<k\leq r-3$ and contains the forbidden $s_1s_2$ when $k=1$, and
            \item $\tau\sigma=(s_{k+2}s_{k+1})s_k=s_{k+2}s_{k+1}s_k$ is forbidden by \Cref{lem:forbidden words}.
        \end{itemize}
        That is, both $\sigma\tau$ and $\tau\sigma$ fall into case (3) when $k=1$, while $\sigma\tau$ falls into case (1) and $\tau\sigma$ falls into case (3) when $1<k\le r-3$.
    \end{itemize}

\smallskip

\noindent \textbf{Products of the form (a)(d) or (d)(a):}
    Let $\sigma=s_k$ for some $1\leq k\leq r$. For $\tau=s_js_{j+1}s_j$ with $2\leq j\leq r-2$ not to be independent with $\sigma$, we must have $j\in\{k-2, k-1, k, k+1\}$.
    \begin{itemize}
        \item Let $j=k-2$ for $4\leq k\leq r$. Then \begin{itemize}
            \item $\sigma\tau=s_k(s_{k-2}s_{k-1}s_{k-2})$ contains $s_ks_{k-1}s_{k-2}$, forbidden by \Cref{lem:forbidden words}, and
            \item $\tau\sigma=(s_{k-2}s_{k-1}s_{k-2})s_k=(s_{k-1}s_{k-2}s_{k-1})s_k$
            contains $s_{k-2}s_{k-1}s_k$, forbidden by \Cref{lem:forbidden words}.
        \end{itemize}
        That is, both $\sigma\tau$ and $\tau\sigma$ fall into case (3).
        \item Let $j=k-1$ for $3\leq k\leq r-1$. Then \begin{itemize}
            \item $\sigma\tau=s_k(s_{k-1}s_ks_{k-1})=s_k(s_ks_{k-1}s_k)=s_{k-1}s_k\in S$, and
            \item $\tau\sigma=(s_{k-1}s_ks_{k-1})s_k=(s_ks_{k-1}s_k)s_k=s_ks_{k-1}\in S$.
        \end{itemize}
        That is, both $\sigma\tau$ and $\tau\sigma$ fall into case (1).
        \item Let $j=k$ for $2\leq k\leq r-2$. Then \begin{itemize}
            \item $\sigma\tau=s_k(s_ks_{k+1}s_k)=s_{k+1}s_k\in S$, and
            \item $\tau\sigma=(s_ks_{k+1}s_k)s_k=s_ks_{k+1}\in S$.
        \end{itemize}
        That is, both $\sigma\tau$ and $\tau\sigma$ fall into case (1).
        \item Let $j=k+1$ for $1\leq k\leq r-3$. Then \begin{itemize}
            \item $\sigma\tau=s_k(s_{k+1}s_{k+2}s_{k+1})$ contains $s_ks_{k+1}s_{k+2}$, forbidden by \Cref{lem:forbidden words}, and
            \item $\tau\sigma=(s_{k+1}s_{k+2}s_{k+1})s_k$ contains $s_{k+2}s_{k+1}s_k$, forbidden by \Cref{lem:forbidden words}.
        \end{itemize}
        That is, both $\sigma\tau$ and $\tau\sigma$ fall into case (3).
    \end{itemize}
\smallskip
\noindent \textbf{Products of the form (a)(e) or (e)(a):}
    Let $\sigma=s_k$ for some $1\leq k\leq r$. For $\tau=s_{j+2}s_{j}s_{j+1}$ with $2\leq j\leq r-3$ not to be independent with $\sigma$, we must have $j\in\{k-3, k-2, k-1, k, k+1\}$.
    \begin{itemize}
        \item Let $j=k-3$ for $5\leq k\leq r$. Then \begin{itemize}
            \item $\sigma\tau= (s_{k})(s_{k-1}s_{k-3}s_{k-2})$ is forbidden by \Cref{Highest:1234}, and
            \item $\tau\sigma=(s_{k-1}s_{k-3}s_{k-2})(s_{k})$ is forbidden by \Cref{Highest:1234}.
        \end{itemize}
        That is, both $\sigma\tau$ and $\tau\sigma$ fall into case (3).
        \item Let $j=k-2$ for $4\leq k\leq r-1$. Then \begin{itemize}
            \item $\sigma\tau= (s_{k})(s_{k}s_{k-2}s_{k-1}) = s_{k-2}s_{k-1}\in S$, and
            \item $\tau\sigma= (s_{k}s_{k-2}s_{k-1})(s_{k})$ contains $s_{k-2}s_{k-1}s_k$, forbidden by \Cref{lem:forbidden words}.
        \end{itemize}
        That is, $\sigma\tau$ falls into case (1) and $\tau\sigma$ falls into case (3).
        \item Let $j=k-1$ for $3\leq k\leq r-2$. Then \begin{itemize}
            \item $\sigma\tau= (s_{k})(s_{k+1}s_{k-1}s_{k})$ contains $s_{k}s_{k-1}s_{k+1}$, forbidden by \Cref{lem:forbidden words}, and
            \item $\tau\sigma= (s_{k+1}s_{k-1}s_{k})(s_{k}) = s_{k+1}s_{k-1}$ is a product of independent elements of $S$, with $2=\ell(s_{k+1})+\ell(s_{k-1})<\ell(\tau)+\ell(\sigma)=4$.
        \end{itemize}
        That is, $\sigma\tau$ falls into case (3) and $\tau\sigma$ falls into case (2).
        \item Let $j=k$ for $2\leq k\leq r-3$. Then \begin{itemize}
            \item $\sigma\tau= (s_{k})(s_{k+2}s_{k}s_{k+1}) = s_{k+2}s_{k+1}\in S$, and
            \item $\tau\sigma= (s_{k+2}s_{k}s_{k+1})(s_{k})=s_{k+2}s_{k+1}s_ks_{k+1}$ contains $s_{k+2}s_{k+1}s_k$, forbidden by \Cref{lem:forbidden words}.
        \end{itemize}
        That is, $\sigma\tau$ falls into case (1) and $\tau\sigma$ falls into case (3).
        \item Let $j=k+1$ for $1\leq k\leq r-4$. Then \begin{itemize}
            \item $\sigma\tau= (s_{k})(s_{k+3}s_{k+1}s_{k+2})=s_{k+3}s_ks_{k+1}s_{k+2}$ contains $s_ks_{k+1}s_{k+2}$, forbidden by \Cref{lem:forbidden words}, and
            \item $\tau\sigma= (s_{k+3}s_{k+1}s_{k+2})(s_{k})$ contains $s_{k+1}s_ks_{k+2}$, forbidden by \Cref{lem:forbidden words}.
        \end{itemize}
        That is, both $\sigma\tau$ and $\tau\sigma$ fall into case (3).
    \end{itemize}
\smallskip

\noindent \textbf{Products of the form (b)(c) (equivalent to (c)(b) via $f$):}
    Let $\sigma=s_{k+1}s_k$ for some $2\leq k\leq r-2$. For $\tau=s_js_{j+1}$ with $2\leq j\leq r-2$ not to be independent with $\sigma$, we must have $j\in\{k-2,k-1,k,k+1,k+2\}$.
    \begin{itemize}
        \item Let $j=k-2$ for $4\leq k\leq r-2$. Then $\sigma\tau= (s_{k+1}s_k)(s_{k-2}s_{k-1})$ is forbidden by \Cref{Highest:1234}. That is, $\sigma\tau$ falls into case (3).
        \item Let $j=k-1$ for $3\leq k\leq r-2$. Then $\sigma\tau= (s_{k+1}s_k)(s_{k-1}s_{k})$ contains $(s_{k+1}s_ks_{k-1})$, forbidden by \Cref{lem:forbidden words}. That is, $\sigma\tau$ falls into case (3).
        \item Let $j=k$ for $2\leq k\leq r-2$. Then $\sigma\tau= (s_{k+1}s_k)(s_{k}s_{k+1})=1$, a product of an empty subset of $S$, {with $0=\ell(1)<\ell(s_{k+1}s_k)+\ell(s_{k}s_{k+1})=4$.} That is, $\sigma\tau$ falls into case (2).
        \item Let $j=k+1$ for $2\leq k\leq r-3$. Then $\sigma\tau= (s_{k+1}s_k)(s_{k+1}s_{k+2}) $ contains $s_ks_{k+1}s_{k+2}$, forbidden by \Cref{lem:forbidden words}. That is, $\sigma\tau$ falls into case (3).
        \item Let $j=k+2$ for $2\leq k\leq r-4$. Then $\sigma\tau= (s_{k+1}s_k)(s_{k+2}s_{k+3})$ contains $s_{k+1}s_ks_{k+2}$, forbidden by \Cref{lem:forbidden words}. That is, $\sigma\tau$ falls into case (3).
    \end{itemize}
    
\smallskip

\noindent \textbf{Products of the form (b)(d) or (d)(b) (equivalent to (c)(d) or (d)(c) via $f$):}
    Let $\sigma=s_{k+1}s_k$ for some $2\leq k\leq r-2$. For $\tau=s_js_{j+1}s_j$ with $2\leq j\leq r-2$ not to be independent with $\sigma$, we must have $j\in\{k-2,k-1,k,k+1,k+2\}$.
    \begin{itemize}
        \item Let $j=k-2$ for $4\leq k\leq r-2$. Then \begin{itemize}
            \item $\sigma\tau=(s_{k+1}s_k)(s_{k-2}s_{k-1}s_{k-2})$ contains $s_{k+1}s_ks_{k-2}s_{k-1}$, forbidden by \Cref{Highest:1234}, and
            \item $\tau\sigma=(s_{k-2}s_{k-1}s_{k-2})(s_{k+1}s_k)$ contains $s_{k-1}s_{k-2}s_{k+1}s_k$, forbidden by \Cref{Highest:1234}.
        \end{itemize}
        That is, both $\sigma\tau$ and $\tau\sigma$ fall into case (3).
        \item Let $j=k-1$ for $3\leq k\leq r-2$. Then \begin{itemize}
            \item $\sigma\tau=(s_{k+1}s_k)(s_{k-1}s_ks_{k-1})$ contains $s_{k+1}s_ks_{k-1}$, forbidden by \Cref{lem:forbidden words} and
            \item $\tau\sigma=(s_{k-1}s_ks_{k-1})(s_{k+1}s_k)$ contains $s_ks_{k-1}s_{k+1}$, forbidden by \Cref{lem:forbidden words}.
        \end{itemize}
        That is, both $\sigma\tau$ and $\tau\sigma$ fall into case (3).
        \item Let $j=k$ for $2\leq k\leq r-2$. Then \begin{itemize}
            \item $\sigma\tau=(s_{k+1}s_k)(s_{k}s_{k+1}s_k)=s_k\in S$, and
            \item $\tau\sigma=(s_{k}s_{k+1}s_k)(s_{k+1}s_k)=s_{k+1}\in S$.
        \end{itemize}
        That is, both $\sigma\tau$ and $\tau\sigma$ fall into case (1).
        \item Let $j=k+1$ for $2\leq k\leq r-3$. Then \begin{itemize}
            \item $\sigma\tau=(s_{k+1}s_k)(s_{k+1}s_{k+2}s_{k+1}) = s_ks_{k+1}s_ks_{k+2}s_{k+1}$ contains $s_{k+1}s_ks_{k+2}$, forbidden by \newline \Cref{lem:forbidden words}, and
            \item $\tau\sigma=(s_{k+1}s_{k+2}s_{k+1})(s_{k+1}s_k) = s_{k+1}s_{k+2}s_k$ is forbidden by \Cref{lem:forbidden words}.
        \end{itemize}
        That is, both $\sigma\tau$ and $\tau\sigma$ fall into case (3).
        \item Let $j=k+2$ for $2\leq k\leq r-4$. Then \begin{itemize}
            \item $\sigma\tau=(s_{k+1}s_k)(s_{k+2}s_{k+3}s_{k+2})$ contains $s_{k+1}s_ks_{k+2}s_{k+3}$, forbidden by \Cref{Highest:1234}, and
            \item $\tau\sigma=(s_{k+2}s_{k+3}s_{k+2})(s_{k+1}s_k)$ contains $s_{k+3}s_{k+2}s_{k+1}s_k$, forbidden by \Cref{Highest:1234}.
        \end{itemize}
        That is, both $\sigma\tau$ and $\tau\sigma$ fall into case (3).
    \end{itemize}

\smallskip
\noindent \textbf{Products of the form (b)(e) or (e)(b) (equivalent to (c)(e) or (e)(c) via $f$):}
    Let $\sigma=s_{k+1}s_k$ for some $2\leq k\leq r-2$. For $\tau=s_{j+2}s_js_{j+1}$ with $2\leq j\leq r-3$ not to be independent with $\sigma$, we must have $j\in\{k-3,k-2,k-1,k,k+1,k+2\}$.
        \begin{itemize}
        \item Let $j=k-3$ for $5\leq k\leq r-2$. Then \begin{itemize}
            \item $\sigma\tau=(s_{k+1}s_k)(s_{k-1}s_{k-3}s_{k-2})$ contains $s_ks_{k-1}s_{k-3}s_{k-2}$, forbidden by \Cref{Highest:1234}, and
            \item $\tau\sigma= (s_{k-1}s_{k-3}s_{k-2})(s_{k+1}s_k)= s_{k-3}s_{k-1}s_{k-2}s_{k+1}s_k$ contains $s_{k-1}s_{k-2}s_{k+1}s_k$, forbidden by \Cref{Highest:1234}.
        \end{itemize}
        That is, both $\sigma\tau$ and $\tau\sigma$ fall into case (3).
        \item Let $j=k-2$ for $4\leq k\leq r-2$. Then \begin{itemize}
            \item $\sigma\tau=(s_{k+1}s_k)(s_{k}s_{k-2}s_{k-1})= (s_{k+1})(s_{k-2}s_{k-1})$ is a pairwise independent product with $3=\ell(s_{k+1})+\ell(s_{k-2}s_{k-1})<\ell(\sigma)+\ell(\tau)=5$, and
            \item $\tau\sigma= (s_{k}s_{k-2}s_{k-1})(s_{k+1}s_k)$ contains $s_{k}s_{k-2}s_{k-1}s_{k+1}$, forbidden by \Cref{Highest:1234}.
        \end{itemize}
        That is, $\sigma\tau$ falls into case (2) and $\tau\sigma$ falls into case (3).
        \item Let $j=k-1$ for $3\leq k\leq r-2$. Then \begin{itemize}
            \item $\sigma\tau=(s_{k+1}s_k)(s_{k+1}s_{k-1}s_{k}) = s_ks_{k+1}s_ks_{k-1}s_k$ contains $s_{k+1}s_ks_{k-1}$, forbidden by \Cref{lem:forbidden words}, and
            \item $\tau\sigma= (s_{k+1}s_{k-1}s_{k})(s_{k+1}s_k)$ contains $s_{k-1}s_{k}s_{k+1}$, forbidden by \Cref{lem:forbidden words}.
        \end{itemize}
        That is, both $\sigma\tau$ and $\tau\sigma$ fall into case (3).
        \item Let $j=k$ for $2\leq k\leq r-3$. Then \begin{itemize}
            \item $\sigma\tau=(s_{k+1}s_k)(s_{k+2}s_{k}s_{k+1})$ contains $s_{k+1}s_ks_{k+2}$, forbidden by \Cref{lem:forbidden words}.
            \item $\tau\sigma= (s_{k+2}s_{k}s_{k+1})(s_{k+1}s_k)=s_{k+2}\in S$.
        \end{itemize}
        That is, $\sigma\tau$ falls into case (3) and $\tau\sigma$ falls into case (1).
        \item Let $j=k+1$ for $2\leq k\leq r-4$. Then \begin{itemize}
            \item $\sigma\tau=(s_{k+1}s_k)(s_{k+3}s_{k+1}s_{k+2})$ contains $s_ks_{k+3}s_{k+1}s_{k+2}$, forbidden by \Cref{Highest:1234}, and
            \item $\tau\sigma= (s_{k+3}s_{k+1}s_{k+2})(s_{k+1}s_k)$ contains $s_{k+2}s_{k+1}s_k$, forbidden by \Cref{lem:forbidden words}.
        \end{itemize}
        That is, both $\sigma\tau$ and $\tau\sigma$ fall into case (3).
    \end{itemize}
\smallskip

\noindent \textbf{Products of the form (d)(e) or (e)(d):}
    Let $\sigma=s_ks_{k+1}s_k$ for some $2\leq k\leq r-2$. For $\tau=s_{j+2}s_js_{j+1}$ with $2\leq j\leq r-3$ not to be independent with $\sigma$, we must have $j\in\{k-3,k-2,k-1,k,k+1,k+2\}$.
        \begin{itemize}
        \item Let $j=k-3$ for $5\leq k\leq r-2$. Then \begin{itemize}
            \item $\sigma\tau=(s_ks_{k+1}s_k)(s_{k-1}s_{k-3}s_{k-2})$ contains $s_ks_{k-1}s_{k-3}s_{k-2}$, forbidden by \Cref{Highest:1234}, and
            \item $\tau\sigma=(s_{k-1}s_{k-3}s_{k-2})(s_ks_{k+1}s_k)$ contains $s_{k-1}s_{k-3}s_{k-2}s_k$, forbidden by \Cref{Highest:1234}.
        \end{itemize}
        That is, both $\sigma\tau$ and $\tau\sigma$ fall into case (3).
        \item Let $j=k-2$ for $4\leq k\leq r-2$. Then \begin{itemize}
            \item $\sigma\tau=(s_ks_{k+1}s_k)(s_{k}s_{k-2}s_{k-1})=s_ks_{k+1}s_{k-2}s_{k-1}$ is forbidden by \Cref{Highest:1234}, and
            \item $\tau\sigma=(s_{k}s_{k-2}s_{k-1})(s_ks_{k+1}s_k)$ contains $s_{k-2}s_{k-1}s_ks_{k+1}$, forbidden by \Cref{Highest:1234}.
        \end{itemize}
        That is, both $\sigma\tau$ and $\tau\sigma$ fall into case (3).
        \item Let $j=k-1$ for $3\leq k\leq r-2$. Then \begin{itemize}
            \item $\sigma\tau=(s_ks_{k+1}s_k)(s_{k+1}s_{k-1}s_{k}) = (s_{k+1}s_ks_{k+1})(s_{k+1}s_{k-1}s_k) = s_{k+1}s_ks_{k-1}s_k$ contains \newline $s_{k+1}s_ks_{k-1}$, forbidden by \Cref{lem:forbidden words}, and
            \item $\tau\sigma=(s_{k+1}s_{k-1}s_{k})(s_ks_{k+1}s_k) = s_{k+1}s_{k-1}s_{k+1}s_k = s_{k-1}s_k\in S$.
        \end{itemize}
        That is, $\sigma\tau$ falls into case (3) and $\tau\sigma$ falls into case (1).
        \item Let $j=k$ for $2\leq k\leq r-3$. Then \begin{itemize}
            \item $\sigma\tau=(s_ks_{k+1}s_k)(s_{k+2}s_{k}s_{k+1})$ contains $s_{k+1}s_ks_{k+2}$, forbidden by \Cref{lem:forbidden words}, and
            \item $\tau\sigma=(s_{k+2}s_{k}s_{k+1})(s_ks_{k+1}s_k)=(s_{k+2}s_ks_{k+1})(s_{k+1}s_ks_{k+1}) = s_{k+2}s_{k+1}\in S$.
        \end{itemize}
        That is, $\sigma\tau$ falls into case (3) and $\tau\sigma$ falls into case (1).
        \item Let $j=k+1$ for $2\leq k\leq r-4$. Then \begin{itemize}
            \item $\sigma\tau=(s_ks_{k+1}s_k)(s_{k+3}s_{k+1}s_{k+2})$ contains $s_ks_{k+3}s_{k+1}s_{k+2}$, forbidden by \Cref{Highest:1234}, and
            \item $\tau\sigma=(s_{k+3}s_{k+1}s_{k+2})(s_ks_{k+1}s_k)$ contains $s_{k+3}s_{k+1}s_{k+2}s_k$, forbidden by \Cref{Highest:1234}.
        \end{itemize}
        That is, both $\sigma\tau$ and $\tau\sigma$ fall into case (3).
        \item Let $j=k+2$ for $2\leq k\leq r-5$. Then \begin{itemize}
            \item $\sigma\tau=(s_{k+1}s_{k}s_{k+1})(s_{k+4}s_{k+2}s_{k+3})$ contains $s_{k+1}s_{k+4}s_{k+2}s_{k+3}$, forbidden by \Cref{Highest:1234}, and
            \item $\tau\sigma=(s_{k+4}s_{k+2}s_{k+3})(s_{k+1}s_ks_{k+1})$ contains $s_{k+4}s_{k+2}s_{k+3}s_{k+1}$, forbidden by \Cref{Highest:1234}.
        \end{itemize}
        That is, both $\sigma\tau$ and $\tau\sigma$ fall into case (3).
    \end{itemize}
   This completes the proof.
\end{proof}

\end{appendices}

\bibliographystyle{abbrv}
\bibliography{bibliography}

\end{document}